\theoremstyle{plain}
\newtheorem{thm}{Theorem}
\newtheorem{prop}{Proposition}[section]
\newtheorem{lem}[prop]{Lemma}
\newtheorem{rmk}[prop]{Remark}
\newcommand {\R} {\mathbb{R}}
\newcommand {\N} {\mathbb{N}}
\newcommand {\Sn} {\mathbb{S}^n}
\newcommand {\p} {\partial}
\newcommand {\D} {\Delta}
\newcommand {\diam} {\text{diam}}
\newcommand{\al}{\alpha}
\newcommand{\half}{\text{\normalfont\sfrac{1}{2}}}
\DeclareMathOperator{\supp}{supp}
\DeclareMathOperator {\dist} {dist}
\DeclareMathOperator{\F} {\mathcal{F}}
\title[Runge Approximation and Stability Improvement]{Runge Approximation and Stability Improvement for a Partial Data Calder\'on problem for the Acoustic Helmholtz Equation}
\author{Mar\'ia \'Angeles Garc\'ia-Ferrero}
\address{Institut für Angewandte Mathematik, Ruprecht-Karls-Universität Heidelberg, Im Neuenheimer Feld 205, 69121 Heidelberg, Germany}
\email{garciaferrero@uni-heidelberg.de}
\author{Angkana Rüland}
\address{Institut für Angewandte Mathematik, Ruprecht-Karls-Universität Heidelberg, Im Neuenheimer Feld 205, 69121 Heidelberg, Germany}
\email{Angkana.Rueland@uni-heidelberg.de}
\author{Wiktoria Zato\'n}
\address{Institut für Mathematik, Universität Zürich, Winterthurerstrasse 190, 8057 Zürich, Switzerland}
\email{wiktoria.zaton@math.uzh.ch}
\begin{document}
\maketitle

\begin{abstract}
In this article, we discuss quantitative Runge approximation properties for the acoustic Helmholtz equation and prove stability improvement results in the high frequency limit for an associated partial data inverse problem modelled on \cite{AU04, KU19}. The results rely on quantitative unique continuation estimates in suitable function spaces with explicit frequency dependence. We contrast the frequency dependence of interior Runge approximation results from non-convex and convex sets.  
\end{abstract}

\section{Introduction}

In this article we study improvement of stability effects in Runge approximation originating from the interplay of geometry and an increasing frequency parameter for the  acoustic Helmholtz equation. These effects had first been observed in \cite{HI04} and have subsequently been the object of intensive study, both in the context of unique continuation \cite{IK11,I19,I07,IS10,IS07} and with regards to their effects on inverse problems \cite{CI16,I18,I11,EI18,EI20,IN14,IN14a,ILW26,I13,IW20,BLT10,
NUW13,BLZ20,INUW14}. Due to the notorious instability in many inverse problems, these improved stability estimates are of great significance, both from a theoretical and practical point of view \cite{BNO19}. We refer to Section ~\ref{sec:literature} for an (incomplete) overview of the history and background of these type of results.

Building on the observation that Runge approximation properties are qualitatively and quantitatively dual to unique continuation \cite{RS17a} (see also \cite{L88,Z07} and the references therein for analogous results in the control theory community), in this article we seek to study the effects of geometry and increasing frequency  $k$ for acoustic Helmholtz  equations
\begin{equation}
    \label{eq:generaleq}
    \big(\Delta+k^2q(x)+V(x)\big)u(x) = 0
\end{equation} 
on the associated Runge approximation properties under suitable conditions on the geometry and the potentials (see Section ~\ref{sec:setting}). For the special case of the (pure) Helmholtz equation ($q=1$ and $V=0$) quantitative Runge approximation had been deduced in \cite[Lemma 2.1]{EPS19} in the context of approximation properties for dispersive equations. Relying on the duality between Runge approximation and unique continuation, we here prove quantitative unique continuation properties for the acoustic Helmholtz equation \eqref{eq:generaleq} in suitable, adapted function spaces, carefully tracking the parameter dependence. 
 
 As two of our main results, we deduce Runge approximation properties with \emph{exponential} $k$-dependence \emph{without} geometric assumptions (Theorems ~\ref{thm:Rungeboundary} and ~\ref{thm:Rungeinterior}) and \emph{improved, polynomial} behaviour under \emph{convexity} conditions on the domain (Theorem ~\ref{thm:Rungeinterior_improv}). 
 
In order to illustrate the importance, robustness and usefulness of these estimates, we consider the partial data inverse problem for the equation \eqref{eq:generaleq}. Using the systematic duality strategy from \cite{RS17b}, we prove improved stability results for this nonlinear inverse problem (Proposition ~\ref{prop:stability}). This generalizes the results from \cite{KU19} to the case of acoustic Helmholtz equations.

\subsection{Setting}
\label{sec:setting}
In the following, we outline the precise geometric and functional assumptions under which our results are valid.  Here, as a model system, we focus on generalizations of Helmholtz type equations with homogeneous Dirichlet conditions under the assumption that the increasing parameter is chosen with some distance to the spectrum. More precisely, for $n\ge2$, $k\ge 1$ and  $\Omega\subseteq \R^n$ a bounded, connected, open set with Lipschitz boundary we consider the acoustic Helmholtz equation \eqref{eq:generaleq}
in $\Omega$, where   
\begin{enumerate}[leftmargin=*]
	\item[(i)] \label{assV}$V\in L^\infty(\Omega)$ and zero is not a Dirichlet-eigenvalue of $\D + V$ in $\Omega$,
	\item[(ii)] \label{assq}$q\in C^1(\Omega)$ and $q$ is strictly positive, that is $0<\kappa^{-1}<q<\kappa$ for some $\kappa>1$. 
\end{enumerate}  
Let us comment on these conditions: The assumption that $V\in L^{\infty}(\Omega)$ ensures that the potential $V$ is subcritical in terms of scaling and that it is well in the regime in which unique continuation results are available. The second condition in \textnormal{(\hyperref[assV]{i})} is a (technical) solvability condition. By domain perturbation arguments, this is generically satisfied \cite{Kato}.

The conditions formulated in \textnormal{(\hyperref[assq]{ii})} on $q$ are two-fold: The sign condition and bounds on $q$ ensure that the acoustic equation \eqref{eq:generaleq} is of Helmholtz type; $q=1$ corresponds to the Helmholtz equation with potential. The regularity condition $q\in C^1(\Omega)$ will be used in order to treat the term $k^2 q(x)$ as part of the principal symbol of the operator. In Sections ~\ref{sec:qUCP_without_geo} and ~\ref{sec:Runge} this will be a consequence of introducing an auxiliary new dimension in order to reduce the parameter dependent equation to a non-parameter dependent equation with $C^1$ principal symbol. In Section ~\ref{sec:improved_results_convexity} we will directly treat the parameter dependent term $k^2 q$ as part of the principal symbol of the Carleman estimate for which we again require some regularity on $q$. In order to do so, we will complement the condition \textnormal{(\hyperref[assq]{ii})} with an additional radial monotonicity assumption (see \textnormal{(\hyperref[assq2]{ii'})} in Section ~\ref{sec:improved_results_convexity_a}).

We consider \eqref{eq:generaleq} with homogeneous Dirichlet boundary conditions. In order to avoid solvability issues or a priori estimates without control on $k$, we make an additional technical assumption: We suppose that the real parameter $k\geq 1$ is chosen such that zero is not a Dirichlet eigenvalue of the operator in $\Omega$. More precisely,
let $\Sigma_{V,q}$ denote the set of the inverse of eigenvalues of the operator $T:=(-\D-V)^{-1} M_q$, where $M_q$ denotes the multiplication operator with $q$. We will then assume that
\begin{enumerate}[leftmargin=.75cm]
\item[(a1)] \label{assSpec} $\dist(k^2,\Sigma_{V,q})>ck^{2-n}$, for some $c\ll 1$.
\end{enumerate}
We remark that generically this does not pose major restrictions, as it is always possible to find arbitrarily large values of $k$ such that the condition \textnormal{(\hyperref[assSpec]{a1})} is fulfilled:
Indeed, the operator $T$ is a classical pseudodifferential operator of order $-2$. We denote  the spectrum of $T^{-1}$ by $\Sigma_{V,q}=\{\lambda_n\}_{n\in\N}$. By Weyl's law
\cite{HormIV, S87}
$$\#\{\lambda_n\leq E\}= C E^{\frac n 2} \mbox{ as } E\to\infty.$$
Thus, on average, the distance between consecutive eigenvalues is $C'E^{1-\frac n 2}$ as $E\to\infty$.
In this case, \textnormal{(\hyperref[assSpec]{a1})}  ensures that it is possible to find admissible frequencies in essentially all frequency ranges with $c\in(0,\frac {C'}{3})$.

We remark that also other boundary conditions would have been feasible. An alternative, natural condition would have been impedance conditions (including the potential $q$) which in the limit of growing domains would have approximated a Sommerfeld type radiation condition. This would also have had the advantage of avoiding the eigenvalue assumptions and discussions. Since we are working in finite domains, for simplicity, we here restrict our attention to the Dirichlet setting.

\subsection{Runge approximation without convexity conditions}

With the conditions stated above, we first address Runge approximation results \emph{without} additional convexity assumptions on the domain. Our main results then provide quantitative Runge approximation results with a quantified dependence on the parameter $k$. Since these properties are dual to unique continuation properties for which \emph{exponential} dependences on $k$ are unavoidable without additional geometric assumptions  \cite{BM20}, the dependences on $k$ are expected to be exponential.

For the case of approximation in the domain in which a solution is prescribed we thus obtain the following result:

\begin{thm}\label{thm:Rungeboundary}
Let $\Omega_1, \Omega_2 \subset\R^n$ be  open, bounded, connected Lipschitz domains such that $\Omega_1\Subset \Omega_2$ and such that $\Omega_2\backslash\overline\Omega_1$ is connected. Let $\Gamma$ be a non-empty,  open subset of $\p \Omega_2$.
Let $V$ and $q$ satisfy \textnormal{(\hyperref[assV]{i})}-\textnormal{(\hyperref[assq]{ii})} in $\Omega_2$.
There exist  constants $\mu>1$, $s\geq n+6$ and $C>1$ depending on $n, \Omega_2, \Omega_1,  \|V\|_{L^\infty(\Omega_2)}, \kappa$ and $\|q\|_{C^1(\Omega_2)}$ such that for any
solution $v \in H^1(\Omega_1)$ of
\begin{align*}
(\Delta+k^2q+V)v = 0 \quad \mbox{in} \quad\Omega_1,
\end{align*}
with $k\geq 1$ satisfying \textnormal{(\hyperref[assSpec]{a1})},
 and any $\epsilon>0$, there exists a solution $u$ to
\begin{align*}
(\Delta+k^2q+V)u = 0 \quad \mbox{in} \quad\Omega_2,
\end{align*}
with  $ u|_{\p\Omega_2}\in \widetilde{H}^{\half}(\Gamma)$ 
such that 
\begin{align}\label{eq:Rungeboundary2}
\|u-v\|_{L^2(\Omega_1)} \leq \epsilon \|v\|_{H^1(\Omega_1)}, \qquad \|u\|_{H^{\half}(\partial \Omega_2)} \leq C e^{Ck^s \epsilon^{-\mu}} \|v\|_{L^2(\Omega_1)}.
\end{align}
\end{thm}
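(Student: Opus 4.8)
The plan is to derive the theorem from a \emph{quantitative} unique continuation estimate for the adjoint problem via a Hilbert-space duality argument, in the spirit of the systematic scheme of \cite{RS17b}. A preliminary reduction removes the large parameter from the principal part: introducing an auxiliary variable $x_{n+1}$ and setting $\tilde u(x,x_{n+1}):=e^{ikx_{n+1}}u(x)$, one checks that $u$ solves $(\Delta+k^2q+V)u=0$ in a domain $D$ if and only if $\tilde u$ solves $(\Delta_x-q(x)\partial_{n+1}^2+V(x))\tilde u=0$ in $D\times\R$. Since $\kappa^{-1}<q<\kappa$, this lifted operator is uniformly elliptic with $C^1$ principal symbol, and $k$ now enters only through the bounded oscillation in $x_{n+1}$ and the a priori bounds. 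All unique continuation is carried out for this $k$-independent lifted equation (Section~\ref{sec:qUCP_without_geo}) and pulled back.

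\emph{Duality set-up.} Let $P\colon\widetilde H^{\half}(\Gamma)\to L^2(\Omega_1)$ send boundary data $g$ to $u|_{\Omega_1}$, where $u\in H^1(\Omega_2)$ solves $(\Delta+k^2q+V)u=0$ in $\Omega_2$ with $u|_{\partial\Omega_2}=g$. Assumption \textnormal{(\hyperref[assSpec]{a1})} ensures solvability together with a polynomial-in-$k$ bound $\|u\|_{H^1(\Omega_2)}\le Ck^{N}\|g\|_{\widetilde H^{\half}(\Gamma)}$ (this is precisely where the distance $ck^{2-n}$ to the spectrum is needed). For $\phi\in L^2(\Omega_1)$, let $w\in H^1_0(\Omega_2)$ solve $(\Delta+k^2q+V)w=\phi\chi_{\Omega_1}$ in $\Omega_2$; Green's formula identifies $P^*\phi=\partial_\nu w|_\Gamma\in H^{-\half}(\Gamma)$. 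If $\phi\perp\operatorname{Range}(P)$ then $\partial_\nu w|_\Gamma=0$; since $w$ solves the homogeneous equation in the connected set $\Omega_2\setminus\overline\Omega_1$ with vanishing Cauchy data on $\Gamma$, weak unique continuation forces $w\equiv0$ there, hence $w=\partial_\nu w=0$ on $\partial\Omega_1$, and a further Green identity gives $\langle\phi,v\rangle_{L^2(\Omega_1)}=0$ for every homogeneous $v\in H^1(\Omega_1)$. This is the qualitative density statement; the quantitative version replaces exact orthogonality by a quantitative modulus.

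\emph{Quantitative continuation and conclusion.} The core step is to quantify the continuation through the non-convex layer $\Omega_2\setminus\overline\Omega_1$ with explicit $k$-dependence. Chaining Carleman/interpolation estimates for the lifted elliptic operator along a sequence of balls joining $\Gamma$ to $\partial\Omega_1$ inside the connected set $\Omega_2\setminus\overline\Omega_1$ (no convexity is available, so the rate is \emph{logarithmic}, not Hölder, consistent with the obstruction in \cite{BM20}) yields, for $w$ as above,
\[
\|w\|_{H^{\half}(\partial\Omega_1)}+\|\partial_\nu w\|_{H^{-\half}(\partial\Omega_1)}
\le Ce^{Ck^{s}}\bigl|\log\bigl(\|\partial_\nu w\|_{H^{-\half}(\Gamma)}/\|w\|_{H^1(\Omega_2)}\bigr)\bigr|^{-1/\mu}\|w\|_{H^1(\Omega_2)},
\]
for suitable $\mu>1$, where the threshold $s\ge n+6$ reflects the order of the Carleman estimates in the $(n+1)$-dimensional lifted problem and the Sobolev embeddings converting $H^1$-smallness into trace-smallness. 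Combined with the a priori bound and the Green identity, this gives, for $\|\phi\|_{L^2(\Omega_1)}\le1$, a modulus bound $|\langle\phi,v\rangle_{L^2(\Omega_1)}|\le\omega(\|P^*\phi\|_{H^{-\half}(\Gamma)})\|v\|_{H^1(\Omega_1)}$ with $\omega$ of the stated logarithmic type (carrying the factors $e^{Ck^s}$, $Ck^N$). Feeding $\omega$ into the abstract quantitative-approximation lemma of \cite{RS17b} — which minimizes $g\mapsto\tfrac12\|Pg-v\|_{L^2(\Omega_1)}^2+\alpha\|g\|_{\widetilde H^{\half}(\Gamma)}^2$ and optimizes $\alpha=\alpha(\epsilon)$ via $\omega$ — produces $g_\epsilon\in\widetilde H^{\half}(\Gamma)$ with $\|Pg_\epsilon-v\|_{L^2(\Omega_1)}\le\epsilon\|v\|_{H^1(\Omega_1)}$ and $\|g_\epsilon\|_{\widetilde H^{\half}(\Gamma)}\le\Phi(\epsilon)\|v\|_{L^2(\Omega_1)}$, with $\Phi$ essentially the inverse of $\omega$; since $\omega^{-1}$ of a logarithm is exponential and absorbs the $e^{Ck^s},Ck^N$ prefactors, one obtains $\Phi(\epsilon)\le Ce^{Ck^s\epsilon^{-\mu}}$ after renaming constants. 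Taking $u$ to be the $\Omega_2$-solution with data $g_\epsilon$ and using $\|u\|_{H^{\half}(\partial\Omega_2)}=\|g_\epsilon\|_{H^{\half}(\partial\Omega_2)}\le\|g_\epsilon\|_{\widetilde H^{\half}(\Gamma)}$ yields \eqref{eq:Rungeboundary2}.

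\emph{Main obstacle.} The decisive point is the quantitative unique continuation estimate with explicit prefactor $Ce^{Ck^s}$ through the non-convex layer: one must track the large-parameter dependence through the chain-of-balls iteration and the dimensional lifting and keep the function-space bookkeeping consistent — exactly where $q\in C^1$, the auxiliary dimension, and the threshold $s\ge n+6$ are used. By contrast, the solvability and a priori bound from \textnormal{(\hyperref[assSpec]{a1})} is a routine spectral estimate, and the remaining duality steps are soft once the modulus $\omega$ is in hand.
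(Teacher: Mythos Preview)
Your overall architecture is right and matches the paper: dimensional lifting to kill the large parameter, duality via the Poisson operator $P$ and its adjoint realized through the inhomogeneous Dirichlet problem, a quantitative unique continuation estimate for $w$ in the layer $\Omega_2\setminus\overline\Omega_1$, and then inversion of the modulus to produce the cost bound. The paper implements the last step via a singular value decomposition of $A=P$ rather than Tikhonov regularization, but that difference is cosmetic.

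There is, however, a genuine quantitative gap in your continuation estimate that breaks the final bound. You record
\[
\|w\|_{H^{\half}(\partial\Omega_1)}+\|\partial_\nu w\|_{H^{-\half}(\partial\Omega_1)}
\le Ce^{Ck^{s}}\bigl|\log(\cdots)\bigr|^{-1/\mu}\|w\|_{H^1(\Omega_2)}
\]
with an \emph{exponential} prefactor $e^{Ck^s}$, and then assert that inverting the logarithm ``absorbs the $e^{Ck^s}$ prefactors'' to give $\Phi(\epsilon)\le Ce^{Ck^s\epsilon^{-\mu}}$. It does not: if $\omega(t)=A|\log t|^{-1/\mu}$ with $A=e^{Ck^s}$, then $\omega^{-1}(\epsilon)\sim\exp\bigl(e^{C\mu k^s}\epsilon^{-\mu}\bigr)$, i.e.\ a \emph{double} exponential in $k$. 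To land on $e^{Ck^s\epsilon^{-\mu}}$ one needs the prefactor in the logarithmic continuation estimate to be only \emph{polynomial} in $k$.

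This is precisely what the paper proves (Proposition~\ref{prop:QUCH1}, estimate~\eqref{eq:QUCPgradboundary}): one gets $\|w\|_{H^1(\Omega_2\setminus\overline\Omega_1)}\le Ck^{n+4}|\log(\cdots)|^{-\mu_0}\|v\|_{L^2(\Omega_1)}$. The point you are missing is that although each three-balls step carries a factor $e^{Ck}$ (from the lifting $\tilde u=e^{kt}u$), when one propagates all the way to the inner boundary one first estimates on $\Omega_\epsilon=\{\dist(\cdot,\partial\Omega)\ge\epsilon\}$ and then \emph{optimizes in the layer thickness $\epsilon$}. Choosing $\tilde\epsilon\sim|\log\tilde\eta|^{-\beta}+Ck|\log\tilde\eta|^{-1}$ balances the $e^{Ck}$ against $\tilde\eta^{\tilde\epsilon}$ and converts the exponential into a polynomial $Ck$ (see the proof of Proposition~\ref{prop:quantitative_UCP}, Step~3, and its upgrade in Proposition~\ref{prop:QUCH1}). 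After a further factor $k^2$ from trace estimates one arrives at $k^{n+6}$, and inverting the logarithmic modulus with this polynomial prefactor gives $\alpha^{-1}=Ce^{Ck^{(n+6)/\mu_0}\epsilon^{-1/\mu_0}}$, whence $s=(n+6)/\mu_0\ge n+6$. So the threshold $s\ge n+6$ does not come from ``the order of the Carleman estimates in the lifted problem'' but from the polynomial prefactor surviving the optimization, divided by the exponent $\mu_0<1$.
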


In Section ~\ref{sec:optimality} we show that in the full data case and for $q=1$, up to the precise values of $\mu$, $s$ and $C$, the bound in $\epsilon$ is optimal, see \cite[Section 5]{RS17a} for the analogous result for the Laplacian without a large parameter.

If $v$ is a solution in a slightly larger domain than the one for which we seek to find a good approximation, the exponential dependence in $\epsilon$ changes to a polynomial dependence while the $k$-dependence remains exponential:

\begin{thm}\label{thm:Rungeinterior}
Let $\Omega_1, \Omega_2, \Gamma, V$ and $q$ be as in Theorem ~\ref{thm:Rungeboundary}. Further, let $\tilde\Omega_1$ be a 
bounded, Lipschitz domain such that $\Omega_1 \Subset \tilde\Omega_1\Subset\Omega_2$. There exist constants $\nu>1$ and $C>1$ depending on $n, \Omega_2, \Omega_1,  \|V\|_{L^\infty(\Omega_2)}, \kappa$ and $\|q\|_{C^1(\Omega_2)}$ such that for any solution $\tilde v \in H^1(\tilde \Omega_1)$ of \begin{align*}
(\Delta+k^2q+V) \tilde v = 0\; \mbox{ in } \tilde\Omega_1,
\end{align*}
with $k\geq 1$ satisfying \textnormal{(\hyperref[assSpec]{a1})}, there exists a solution $u$ to
\begin{align*}
(\Delta+k^2q+V)u = 0 \; \mbox{ in } \Omega_2
\end{align*}
with $u|_{\p\Omega_2}\in \widetilde{H}^{\half}(\Gamma)$
such that 
\begin{align}\label{eq:Rungeinterior2}
\|u-\tilde v\|_{L^2(\Omega_1)} \leq \epsilon \|\tilde v\|_{H^1(\tilde\Omega_1)}, \qquad \|u\|_{H^{\half}(\Gamma)}\leq  C{ e^{Ck}}\epsilon^{-\nu}\|\tilde v\|_{L^2( \Omega_1)}.
\end{align}
\end{thm}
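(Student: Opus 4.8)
The plan is to deduce this interior approximation result from Theorem \ref{thm:Rungeboundary} by an interpolation-type argument that trades the bad exponential dependence in $\epsilon$ against the gain coming from the extra room between $\Omega_1$ and $\tilde\Omega_1$. First I would apply Theorem \ref{thm:Rungeboundary} with $\tilde\Omega_1$ playing the role of the inner domain (so that the solution $\tilde v\in H^1(\tilde\Omega_1)$ is approximated there): for a parameter $\epsilon'>0$ to be chosen, this produces a solution $u$ of $(\Delta+k^2q+V)u=0$ in $\Omega_2$ with $u|_{\p\Omega_2}\in\widetilde H^{\half}(\Gamma)$ satisfying $\|u-\tilde v\|_{L^2(\tilde\Omega_1)}\le\epsilon'\|\tilde v\|_{H^1(\tilde\Omega_1)}$ and the boundary bound $\|u\|_{H^{\half}(\p\Omega_2)}\le Ce^{Ck^s(\epsilon')^{-\mu}}\|\tilde v\|_{L^2(\tilde\Omega_1)}$. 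Note that since $\Omega_1\Subset\tilde\Omega_1$, we have $\|\tilde v\|_{L^2(\tilde\Omega_1)}$ controlled by $\|\tilde v\|_{H^1(\tilde\Omega_1)}$; the final bound should be phrased in terms of $\|\tilde v\|_{L^2(\Omega_1)}$, which is handled below.

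The key point is that both $u$ and $\tilde v$ solve the same elliptic equation in the intermediate region, so I can upgrade the crude $L^2(\tilde\Omega_1)$ closeness to a much stronger interior smoothness of the difference via Caccioppoli / interior elliptic estimates with $k$-explicit constants: on $\Omega_1\Subset\tilde\Omega_1$ one gets $\|u-\tilde v\|_{H^1(\Omega_1)}\lesssim (1+k^2)\|u-\tilde v\|_{L^2(\tilde\Omega_1)}$ (since $\Delta(u-\tilde v)=-(k^2q+V)(u-\tilde v)$ and $\|k^2q+V\|_{L^\infty}\lesssim k^2$), with the implicit constant depending only on $n,\dist(\Omega_1,\p\tilde\Omega_1)$ and $\kappa,\|V\|_{L^\infty}$. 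Hence $\|u-\tilde v\|_{H^1(\Omega_1)}\le Ck^2\epsilon'\|\tilde v\|_{H^1(\tilde\Omega_1)}$; choosing $\epsilon'=\epsilon/(Ck^2)$ gives the desired approximation inequality $\|u-\tilde v\|_{L^2(\Omega_1)}\le\|u-\tilde v\|_{H^1(\Omega_1)}\le\epsilon\|\tilde v\|_{H^1(\tilde\Omega_1)}$.

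It remains to rewrite the boundary estimate. With the above choice of $\epsilon'$, the exponent becomes $Ck^s(\epsilon')^{-\mu}=Ck^{s+2\mu}\epsilon^{-\mu}$, which is still of the form $e^{Ck^{s'}\epsilon^{-\nu}}$ — but the statement claims the improved $e^{Ck}\epsilon^{-\nu}$, so this naive route is insufficient. The actual argument must instead exploit the extra interior smoothness of $\tilde v$ itself: the point of the hypothesis "$\tilde v$ solves the equation on the strictly larger set $\tilde\Omega_1$" is that, combined with the three-balls / quantitative unique continuation inequality underlying Theorem \ref{thm:Rungeboundary} (propagation of smallness from $\Omega_1$ into $\tilde\Omega_1$ with only a polynomial loss when there is a fixed collar), one can run the duality/orthogonality argument directly against $\|\tilde v\|_{L^2(\Omega_1)}$ and absorb the exponential into a power. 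Concretely, I would revisit the duality proof of Theorem \ref{thm:Rungeboundary}: the boundary norm of $u$ is controlled by the operator norm of the (regularized) inverse of the adjoint trace-to-solution map, and the quantitative unique continuation estimate used there is of Hölder type on the collar $\tilde\Omega_1\setminus\overline\Omega_1$ — giving $e^{Ck}\|w\|_{L^2(\Omega_1)}^{\theta}\|w\|^{1-\theta}$-type bounds with an $e^{Ck}$ prefactor rather than $e^{Ck^s}$ — so that the resulting modulus of continuity is $Ce^{Ck}\epsilon^{-\nu}$ with $\nu=\nu(\theta,\mu)>1$. The main obstacle is exactly this last step: making the $k$-dependence genuinely $e^{Ck}$ rather than $e^{Ck^{s}}$ requires that the quantitative unique continuation input be the sharp Hölder-stable propagation of smallness across a fixed-width collar (with the correct $k$-scaling of the Carleman weight), not the interpolation/logarithmic estimate that is forced when one must reach all the way to $\p\Omega_1$; carefully tracking that the collar estimate contributes only $e^{Ck}$ and a fixed power of $\epsilon^{-1}$, and that composing it with the boundary estimate of Theorem \ref{thm:Rungeboundary} does not reintroduce higher powers of $k$ in the exponent, is where the real work lies.
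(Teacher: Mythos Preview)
Your second approach --- rerunning the duality argument from the proof of Theorem~\ref{thm:Rungeboundary} but feeding in the H\"older-type interior unique continuation estimate with $e^{Ck}$ prefactor --- is exactly the paper's strategy, and your first attempt (bootstrapping from Theorem~\ref{thm:Rungeboundary} itself) is indeed too crude, as you note. However, your description of how the second approach actually runs is muddled in a way that hides the one concrete step that makes it work.

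The point you are missing is this: in the proof of Theorem~\ref{thm:Rungeboundary}, after constructing $v_\alpha$ and $w_\alpha$ via the singular value decomposition of $A$, the key identity is
\[
\|v_\alpha\|_{L^2(\Omega_1)}^2=(v,\p_\nu w_\alpha)_{\p\Omega_1}-(\p_\nu v,w_\alpha)_{\p\Omega_1},
\]
which forces you to control $\|w_\alpha\|_{H^1}$ all the way up to $\p\Omega_1$ and hence to use the logarithmic estimate \eqref{eq:QUCPgradboundary}. Here, because $\tilde v$ solves the equation on the strictly larger set $\tilde\Omega_1$, you take $v=\tilde v|_{\Omega_1}$, run the \emph{same} spectral construction on $\Omega_1$, and then push the integration by parts outward to $\p\tilde\Omega_1$:
\[
\|v_\alpha\|_{L^2(\Omega_1)}^2=(\tilde v,\p_\nu w_\alpha)_{\p\tilde\Omega_1}-(\p_\nu \tilde v,w_\alpha)_{\p\tilde\Omega_1}\le Ck^2\|\tilde v\|_{H^1(\tilde\Omega_1)}\|w_\alpha\|_{H^1(G)},
\]
where $G$ is a neighbourhood of $\p\tilde\Omega_1$ with $G\Subset\Omega_2\setminus\overline{\Omega_1}$. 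Now $w_\alpha$ only needs to be estimated on a compactly contained set, so the interior estimate \eqref{eq:QUCPgradinterior} applies directly and gives $\|w_\alpha\|_{H^1(G)}\le Ce^{Ck}\alpha^{\nu_0}\|v_\alpha\|_{L^2(\Omega_1)}$. Dividing through and choosing $\alpha$ so that $Ce^{Ck}\alpha^{\nu_0}=\epsilon$ finishes. There is no ``composing with the boundary estimate of Theorem~\ref{thm:Rungeboundary}'' --- that would reintroduce the bad exponent --- and the relevant collar is not $\tilde\Omega_1\setminus\overline{\Omega_1}$ but rather a compact piece of $\Omega_2\setminus\overline{\Omega_1}$ on which the dual function $w_\alpha$ is estimated.
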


As an application of these results we prove a partial data uniqueness result for the Calder\'on problem with stability improvement in $k$ under a priori assumptions on the potential in a neighbourhood of the boundary. This generalizes the results from \cite{KU19} to accoustic equations. 
In particular, it thus combines ideas from \cite{AU04,RS17} with the observations from \cite{HI04} (see also the references above and below). We further refer to \cite{I11} for similar results for different ranges of $k$.

\begin{prop}\label{prop:stability}
Let $n\geq 3$, let $\Omega\subset\R^n$ be a bounded, connected, smooth open set and let $\Gamma\subset\p\Omega$ be a nonempty open subset. Let $\Omega'\Subset\Omega$ be an open, Lipschitz subset such that  $\Omega\backslash\Omega'$ is connected.
Let  $q_1, q_2, V_1, V_2$ verify \textnormal{(\hyperref[assV]{i})}-\textnormal{(\hyperref[assq]{ii})} in $\Omega$ and be such that
\begin{align*}
\|q_j\|_{L^\infty(\Omega)}+\|V_j\|_{L^\infty(\Omega)}\leq B,\\
q_1=q_2,\; V_1=V_2 \mbox{ in } \Omega\backslash\Omega'.
\end{align*} 
Then, there exists a constant $C>1$ depending on $n, \Omega, \Omega',\Gamma $, $\|q_j\|_{C^1(\Omega)}$ and  $B$ such  that  for all $k\geq 1$ such that
$\dist(k^2, \Sigma_{V_j,q_j}) > ck^{2-n}$
and 
$\delta=\|\Lambda_{V_1,q_1}^\Gamma(k)-\Lambda_{V_2,q_2}^\Gamma(k)\|_{\tilde H^{\half}(\Gamma)\to H^{-\half}(\Gamma)}<1$, we have
\begin{align*}
\|k^2(q_2-q_1)+(V_2-V_1)\|_{H^{-1}(\Omega)}
\leq C\Bigg(e^{Ck^{n+3}} \delta+\frac{1}{\left(k+|\log\delta|^{\frac{1}{n+3}}\right)^{\frac 2 n}}\Bigg).
\end{align*}
\end{prop}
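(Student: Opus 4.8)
The plan is to combine an Alessandrini-type orthogonality identity, quantitative (complex) geometric optics solutions for the acoustic Helmholtz equation with explicit frequency dependence, and the interior Runge approximation of Theorem~\ref{thm:Rungeinterior} in order to reduce the partial data estimate to a Fourier-analytic reconstruction of the difference potential $W := k^2(q_2-q_1)+(V_2-V_1)$, which by hypothesis is supported in $\overline{\Omega'}$. Throughout I use the a priori bounds $\|W\|_{L^\infty(\Omega)}\le C(1+k^2)$, $\|W\|_{L^2(\Omega)}\le C(1+k^2)$ and $\|W\|_{H^{-1}(\Omega)}\le C(1+k^2)$, which follow from the assumptions on $q_j,V_j$ and $|\Omega'|<\infty$. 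The starting point is the orthogonality identity: if $u_1$ solves $(\Delta+k^2q_1+V_1)u_1=0$ in $\Omega$ and $u_2$ solves $(\Delta+k^2q_2+V_2)u_2=0$ in $\Omega$, both with boundary data supported in $\Gamma$, then Green's formula together with $q_1=q_2$, $V_1=V_2$ on $\Omega\setminus\Omega'$ gives $\int_{\Omega'}Wu_1u_2\,dx=\langle(\Lambda^\Gamma_{V_1,q_1}(k)-\Lambda^\Gamma_{V_2,q_2}(k))u_1|_{\p\Omega},\,u_2|_{\p\Omega}\rangle$, hence
\begin{align*}
\Big|\int_{\Omega'}Wu_1u_2\,dx\Big|\le\delta\,\|u_1\|_{\widetilde{H}^{\half}(\Gamma)}\,\|u_2\|_{\widetilde{H}^{\half}(\Gamma)}.
\end{align*}
It therefore suffices to produce, for each $\xi\in\R^n$, admissible pairs $(u_1,u_2)$ whose product is close to $e^{ix\cdot\xi}$ on $\Omega'$ with good control of the boundary norms.

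To this end, I fix a Lipschitz domain $\Omega''$ with $\Omega'\Subset\Omega''\Subset\Omega$ and construct, for $j=1,2$ separately, solutions $v_j$ of $(\Delta+k^2q_j+V_j)v_j=0$ in $\Omega''$: for $|\xi|\lesssim k$ bounded oscillatory (Herglotz/geometric optics) solutions with $\|v_j\|_{H^1(\Omega'')}\le Cp(k)$ for a fixed polynomial $p$, which allow the recovery of $\widehat W(\xi)$ for $|\xi|\lesssim k$ at no exponential cost (this is the origin of the $k$ appearing additively in the final bound); and for $|\xi|\gtrsim k$ complex geometric optics solutions $v_j=e^{x\cdot\rho_j}(1+\psi_j)$ with $\rho_1+\rho_2=i\xi$ and $|\mathrm{Re}\,\rho_j|=\tau$, for which --- treating the $C^1$ term $k^2q_j$ either via a limiting Carleman weight adapted to $q_j$ or as a large subprincipal perturbation, and $V_j$ as a bounded potential --- one has $\|\psi_j\|_{L^2(\Omega'')}\le Cp(k)\tau^{-1}$, $\|v_j\|_{L^2(\Omega'')}\le Ce^{C\tau}$ and $\|v_j\|_{H^1(\Omega'')}\le C\tau e^{C\tau}$, valid once $\tau$ exceeds a fixed power of $k$. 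Applying Theorem~\ref{thm:Rungeinterior} with $\Omega_1=\Omega'$, $\tilde\Omega_1=\Omega''$ and $\Omega_2=\Omega$ yields, for every $\epsilon>0$, solutions $u_j$ of the respective equations in $\Omega$ with $u_j|_{\p\Omega}\in\widetilde{H}^{\half}(\Gamma)$ such that $\|u_j-v_j\|_{L^2(\Omega')}\le\epsilon\|v_j\|_{H^1(\Omega'')}$ and $\|u_j\|_{\widetilde{H}^{\half}(\Gamma)}\le Ce^{Ck}\epsilon^{-\nu}\|v_j\|_{L^2(\Omega')}$.

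Writing $\widehat W(-\xi)=\int_{\Omega'}We^{ix\cdot\xi}\,dx$ and inserting these solutions into the orthogonality identity, $|\widehat W(-\xi)|$ is controlled, for $|\xi|$ below a cutoff $R$, by the geometric optics error ($\lesssim\|W\|_{L^2}\,p(k)\,\tau^{-1}$, plus the quadratic term in $\psi_j$), the Runge transfer error ($\lesssim\|W\|_{L^\infty}\,\max_j\|u_j-v_j\|_{L^2(\Omega')}\,\max_j\|u_j\|_{L^2(\Omega')}$) and the measurement term ($\lesssim\delta e^{Ck}\epsilon^{-2\nu}e^{C\tau}$). One then chooses $\epsilon$ exponentially small in $\tau$ (so the Runge error is absorbed), $\tau$ of order $\max(k^{n+3},|\log\delta|)$ (so the measurement term stays of size $\lesssim e^{Ck^{n+3}}\delta$ while $\tau$ is as large as the data permit), and the cutoff $R\sim k+|\log\delta|^{1/(n+3)}=\tau^{1/(n+3)}\vee k$. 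Finally, using $n\ge 3$, one splits $\|W\|_{H^{-1}(\Omega)}^2=\int_{|\xi|\le R}(1+|\xi|^2)^{-1}|\widehat W(\xi)|^2\,d\xi+\int_{|\xi|>R}(1+|\xi|^2)^{-1}|\widehat W(\xi)|^2\,d\xi$, estimating the low-frequency part through the reconstruction just described and the high-frequency tail through the a priori bound on $\|W\|_{L^2(\Omega)}$; collecting terms then produces the asserted estimate $\|W\|_{H^{-1}(\Omega)}\lesssim e^{Ck^{n+3}}\delta+(k+|\log\delta|^{1/(n+3)})^{-2/n}$.

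The genuinely delicate point is the \emph{quantitative} geometric optics construction for the acoustic equation with fully \emph{explicit} $k$-dependence --- in particular the treatment of the variable term $k^2q_j$ of size $k^2$ (via a weight adapted to $q_j$, using $q_j\in C^1$, or as a large subprincipal perturbation), and the uniform-in-$k$ construction of the bounded low-frequency solutions --- together with the careful propagation of all powers of $k$ through Theorem~\ref{thm:Rungeinterior} and the resulting three-parameter optimization in $\epsilon$, $\tau$ and $R$, which is exactly what forces the exponent $n+3$ and the saturation rate $2/n$; getting these numerology matches right is where most of the work lies. Subsidiary but necessary points are: extending $q_j,V_j$ past $\p\Omega$ consistently on $\R^n\setminus\Omega'$ (where the coefficients agree), so that the geometric optics solutions are defined on the slightly larger domain $\Omega''$ required by Theorem~\ref{thm:Rungeinterior}; verifying the partial data Alessandrini identity with the $\widetilde{H}^{\half}(\Gamma)$ to $H^{-\half}(\Gamma)$ duality pairing; and using the spectral gap assumption $\dist(k^2,\Sigma_{V_j,q_j})>ck^{2-n}$ to guarantee solvability and well-posedness of all objects entering the argument.
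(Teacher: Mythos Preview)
Your proposal follows essentially the same route as the paper: Alessandrini's identity, CGO solutions on a slightly enlarged interior domain, transfer to $\Gamma$-supported boundary data via Theorem~\ref{thm:Rungeinterior}, Fourier reconstruction of $W$ with a low/high frequency split, and a multi-parameter optimization. Two points where the paper is more direct are worth recording.

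First, the paper does \emph{not} split the solution construction into a bounded Herglotz regime for $|\xi|\lesssim k$ and a complex-exponential regime for $|\xi|\gtrsim k$, nor does it invoke any limiting Carleman weight adapted to $q_j$. It simply sets $b_j:=k^2q_j+V_j$, treats this as a single bounded potential of $L^\infty$ size $\le k^2B$, and quotes the classical Sylvester--Uhlmann CGO solutions $u_j=e^{\xi_j\cdot x}(1+\psi_j)$ with $\|\psi_j\|_{L^2(\Omega)}\le Ck^2B/\tau$, valid once $\tau\ge C_0k^2B$. All Fourier frequencies $|r|\le 2\tau$ are handled by this one construction; the additive $k$ in the final bound emerges from the optimization algebra, not from a separate low-frequency analysis. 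This sidesteps the eikonal/transport construction your Herglotz proposal would need for variable $q$ (where the product of two such solutions is not $e^{ix\cdot\xi}$ plus a small remainder in any obvious way).

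Second, the specific parameter choices differ from yours. The paper takes $\epsilon=\delta^{1/(1+2\nu)}$, the frequency cutoff $\rho=(\tau/(k^2B))^{2/n}$ (chosen to balance the CGO remainder $\rho^{n-2}(k^2B)^4\tau^{-2}$ against the tail $\rho^{-2}(k^2B)^2$), and then $C\tau=CC_0k^{n+3}B+c|\log\delta|$. This first gives the intermediate bound $\|W\|_{H^{-1}}\lesssim e^{Ck^{n+3}}\delta^{1/(1+2\nu)}+(k+k^{-(n+2)}|\log\delta|)^{-2/n}$, and only a final pair of Young-inequality steps converts it into the stated form with $e^{Ck^{n+3}}\delta$ and $(k+|\log\delta|^{1/(n+3)})^{-2/n}$. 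Your $R\sim k+|\log\delta|^{1/(n+3)}$ is thus the \emph{output} of this algebra, not the cutoff one actually plugs into the frequency split; inserting it directly into the tail $R^{-2}\|W\|_{L^2}^2\sim R^{-2}k^4$ would not produce the claimed rate.
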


\begin{rmk}
\label{rmk:two_meas}
We remark that if in Proposition ~\ref{prop:stability} two measurements for different values of $k$ are available, it is possible to provide stability both for $q_j$ and $V_j$ separately.
\end{rmk}

Earlier improvement of stability results had been obtained for the corresponding \emph{full data} inverse problems in \cite{NUW13, INUW14}. While in the case of the Helmholtz equation ($q=1$) with potential,
the $k$-dependence of the Lipschitz contribution was proved to be \emph{polynomial} instead of exponential, already in the full data acoustic case ($q\neq 1$) exponential $k$-dependences emerged.

\subsection{Improvements of the Runge approximation results in convex geometries}

\label{sec:improved_results_convexity_a}

Last but not least, in our final section,  in line with the observations from \cite{HI04} (and the literature building up on this), showing that in convex domain geometries  the $k$-dependences in quantitative unique continuation improve with a large parameter, we also obtain improved Runge approximation results in the interior in the presence of a large parameter.
 Here we impose an additional monotonicity condition on the potential $q$ which is well-known in the context of the study of embedded eigenvalues \cite{KT06}:

\begin{itemize}
\item[(ii')] \label{assq2}
$q\in C^1(\Omega)$, $\kappa^{-1}\leq q\leq \kappa$ for some $\kappa>1$ and $\nabla q\cdot x\geq 0$.
\end{itemize}

With this assumption we deduce improved (in $k$) dependences in the Runge approximation results in the interior for convex geometries:

\begin{thm}\label{thm:Rungeinterior_improv}
Let $V$ and $q$ be as in \textnormal{(\hyperref[assV]{i})}-\textnormal{(\hyperref[assq2]{ii'})} in $\Omega_2=B_2 \backslash \overline{B_{\half}}$ and let $\Omega_1=B_1 \backslash \overline{B_{\half}}$ and $\tilde \Omega_1=B_{1+\delta}\backslash \overline{B_{\half}}$, for some $\delta\in(0,1)$.  There exist  parameters $\nu>1$, $s>3$ and a constant $C>1$ depending on $n,  \|V\|_{L^\infty(\Omega_2)}, \kappa$ and $\|q\|_{C^1(\Omega_2)}$ such that for any solution $\tilde v \in H^1(\tilde \Omega_1)$ of \begin{align*}
(\Delta+k^2q+V) \tilde v &= 0 \; \mbox{ in }  \tilde \Omega_1,\\
\tilde v&=0 \; \mbox{ on }  \p B_{\half},
\end{align*}
with $k\geq 1$ satisfying \textnormal{(\hyperref[assSpec]{a1})}, there exists a solution $u$ to
\begin{align*}
(\Delta+k^2q+V)u &= 0 \; \mbox{ in } \Omega_2,\\u&=0 \; \mbox{ on }  \p B_{\half},
\end{align*}
such that 
\begin{align*}
\|u-\tilde v\|_{L^2(\Omega_1)} \leq \epsilon \|\tilde v\|_{H^1(\tilde \Omega_1)}, \qquad \|u\|_{H^{\half}(\p B_2)}\leq C k^s \epsilon^{-\nu}\| \tilde v\|_{L^2(\Omega_1)}.
\end{align*}
\end{thm}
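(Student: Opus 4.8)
The plan is to follow the general duality strategy relating Runge approximation to quantitative unique continuation, exactly as in the proofs of Theorems \ref{thm:Rungeboundary} and \ref{thm:Rungeinterior}, but now using the \emph{improved} three-balls / propagation-of-smallness estimate available in the radial convex geometry $B_2\setminus\overline{B_{1/2}}$ with the monotonicity assumption \textnormal{(\hyperref[assq2]{ii'})}. Concretely, let $P_{k} := \Delta+k^2q+V$, and let $A$ be the operator sending a boundary datum $f\in\widetilde H^{1/2}(\p B_2)$ (with $f=0$ on $\p B_{1/2}$) to the solution $u|_{\Omega_1}$ of the homogeneous Dirichlet–mixed problem; by \textnormal{(\hyperref[assSpec]{a1})} and the well-posedness discussion this $A$ is a bounded operator with a $k$-explicit norm bound. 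The Runge density statement is the assertion that $\mathrm{Ran}(A)$ is dense in the (closed) solution space $\{v\in H^1(\Omega_1):P_k v=0\}$ in the $L^2(\Omega_1)$ topology, and the quantitative statement is a modulus-of-continuity estimate for a (regularized) right inverse of $A$. Following \cite{RS17a,RS17b}, one writes $v-u = v - A f$ and chooses $f$ via a Tikhonov-regularized minimization $\min_f \|Af - v\|_{L^2(\Omega_1)}^2 + \alpha\|f\|_{\widetilde H^{1/2}}^2$; the rate at which the residual tends to zero as $\alpha\to 0$, and the simultaneous blow-up of $\|f\|$, are controlled by the singular value decay of $A$, which in turn is dual to a quantitative unique continuation bound for the adjoint problem $P_k^* w = g\in L^2(\Omega_1)$ (supported in $\Omega_1$), $w=0$ on $\p B_2$, $w=0$ on $\p B_{1/2}$.

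The key point — and the only place the convex geometry enters — is the quantitative unique continuation estimate for the adjoint: one needs that if $w$ solves $P_k^* w = 0$ in $\tilde\Omega_1 \setminus \overline{\Omega_1}$ (the spherical shell between radii $1$ and $2$, essentially) with smallness of its Cauchy data on $\p B_2$, then $w$ is correspondingly small on $\Omega_1$, \emph{with only polynomial loss in $k$}. In the non-convex Theorems \ref{thm:Rungeboundary}–\ref{thm:Rungeinterior} this step produces exponential $k$-loss because one adds an auxiliary dimension to absorb $k^2q$ into a $C^1$ principal symbol and then invokes a generic (logarithmic / Hölder) propagation-of-smallness estimate. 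Here, instead, I would invoke the improved Carleman estimate of Section \ref{sec:improved_results_convexity}, in which $k^2 q$ is kept as part of the principal symbol: in radial coordinates on the shell, with the convexity of the spheres $\p B_r$ and the monotonicity $\nabla q\cdot x\ge 0$ ensuring the correct sign in the commutator (pseudoconvexity of the level sets with respect to $\Delta+k^2 q$, as in the embedded-eigenvalue analysis \cite{KT06}), one obtains a Carleman inequality whose constants are polynomial in $k$, hence a three-region interpolation estimate $\|w\|_{\Omega_1} \lesssim k^{C}\,\|w\|_{\text{Cauchy data on }\p B_2}^{\theta}\,\|w\|_{\tilde\Omega_1}^{1-\theta}$ with $k$-independent exponent $\theta\in(0,1)$.

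Feeding this polynomial-in-$k$ unique continuation estimate into the duality machine gives, for the regularized solution $f=f_\alpha$, the residual bound $\|Af_\alpha - v\|_{L^2(\Omega_1)} \lesssim (\text{modulus})$ and the norm bound $\|f_\alpha\|_{\widetilde H^{1/2}} \lesssim k^{C}\,\alpha^{-\text{const}}\,\|v\|_{L^2(\Omega_1)}$ with a Hölder (algebraic) modulus; optimizing $\alpha$ in terms of $\epsilon$ then converts the residual requirement $\|Af_\alpha - v\|_{L^2(\Omega_1)}\le \epsilon\|\tilde v\|_{H^1(\tilde\Omega_1)}$ into the stated bound $\|u\|_{H^{1/2}(\p B_2)}\le C k^s\epsilon^{-\nu}\|\tilde v\|_{L^2(\Omega_1)}$, with $s>3$ reflecting the accumulated polynomial powers of $k$ from the Carleman constants, the trace estimates, and the well-posedness of the forward problem under \textnormal{(\hyperref[assSpec]{a1})}. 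The polynomial (rather than exponential) $\epsilon$-dependence, as in Theorem \ref{thm:Rungeinterior}, comes precisely from the fact that the prescribed solution $\tilde v$ lives on the strictly larger shell $\tilde\Omega_1$, so one has room for an extra interior interpolation step (a standard Caccioppoli plus three-balls argument on the collar $\tilde\Omega_1\setminus\overline{\Omega_1}$) which upgrades a logarithmic rate to a Hölder rate. I expect the main obstacle to be the careful bookkeeping of the $k$-powers through the radial Carleman estimate near the inner boundary $\p B_{1/2}$ — in particular checking that the homogeneous Dirichlet condition there is compatible with the Carleman weight (no boundary term of the wrong sign appears) and that the weight can be chosen so that the pseudoconvexity condition holds uniformly on the whole shell despite the $k^2 q$ term; the monotonicity \textnormal{(\hyperref[assq2]{ii'})} is exactly what is needed to close this, but making the constants genuinely polynomial in $k$ (and not, say, $k$-dependent through the weight parameters in a way that reintroduces exponentials) requires the quantitative version of the Carleman estimate from Section \ref{sec:improved_results_convexity} to be applied with the parameters tuned as a function of $k$.
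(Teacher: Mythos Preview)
Your proposal is correct and follows essentially the same approach as the paper: the paper's proof of Theorem~\ref{thm:Rungeinterior_improv} repeats verbatim the SVD/duality construction of $u_\alpha,v_\alpha,w_\alpha$ from the proof of Theorem~\ref{thm:Rungeinterior} with $\Gamma=\partial B_2$, replacing the exponential-in-$k$ unique continuation estimate \eqref{eq:QUCPgradinterior} by the polynomial-in-$k$ bound of Proposition~\ref{prop:QUCH1_improv} (which packages the improved Carleman estimate of Section~\ref{sec:improved_results_convexity}), and then optimizes in $\alpha$. Your Tikhonov formulation is equivalent to the paper's hard singular-value truncation, and your worry about the inner boundary $\partial B_{1/2}$ is unnecessary: the Carleman estimate is applied to $w_\alpha$ on the shell $B_2\setminus\overline{B_1}$ (where the source vanishes), so $\partial B_{1/2}$ never enters that step.
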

These results will be derived by duality from improved unique continuation estimates for the dual equations. The choice of the specific geometry here should be viewed as a sample results which -- based on the known unique continuation properties -- are expected for a larger class of convex domains.

\subsection{Connection with the literature}
\label{sec:literature}
In order to put our results into a proper context, we recall some of the earlier literature on improved stability properties. Due to their ability to stabilize notoriously ill-posed inverse problems, the stabilization effects at high frequency which had first been established in \cite{HI04} in the context of improved (interior) unique continuation properties were subsequently extended to improved unique continuation properties in various other geometric settings and other model equations \cite{IK11,I19,I07,IS10,IS07,CI16,I18,I11,EI18,EI20,IN14,IN14a,ILW26,I13,IW20,BLT10,
NUW13,BLZ20,INUW14}. The optimality of exponential $k$-dependences in unique continuation (in the form of three balls inequalities) was further  established recently in \cite{BM20} for the exact Helmholtz equation (which can be studied by investigating explicit behaviour of Bessel functions). Earlier, in \cite{J60}, the role of the geometry had already been highlighted for the closely connected wave equation, see also \cite{KRS20} for a systematic, microlocal argument for this.

Relying on these ideas further stability improvement results were also obtained for nonlinear inverse problems such as various variants of the Calder\'on problem. In this context, full data results were established in \cite{INUW14} for the Helmholtz equation with potential and in \cite{NUW13} for the acoustic equation. In recent work \cite{KU19}, this was extended to a partial data result for the Helmholtz equation with potential and impedance boundary conditions. Optimality of the improved stability estimates was discussed in a series of articles \cite{Isaev13,IN12,Isaev13a}.

\subsection{Outline of the remaining article} 
The remaining article is organised as follows: After briefly recalling some auxiliary results in Section \ref{sec:not}, we turn to the quantitative unique continuation results without geometric assumptions in Section \ref{sec:qUCP_without_geo}. In Section \ref{sec:Runge} a duality argument is used to transfer these into quantitative Runge approximation results. As an application we prove partial data stability for the Calder\'on problem for the acoustic equation with a priori information in a boundary layer in Section \ref{sec:stability}. Finally, in Section \ref{sec:improved_results_convexity} we discuss improvements arising from convex geometries.

\subsection{Notation and preliminaries}
\label{sec:not}
Before turning to the proofs of our main results we recall a number of auxiliary arguments and summarize our notation.

\subsubsection{On spectral estimates}

The following result contains a global estimate for the homogeneous Dirichlet problem depending on $\dist(k^2,\Sigma_{V,q})$.
It generalizes 
\cite[Proposition 2]{Beretta} and together with the assumption \textnormal{(\hyperref[assSpec]{a1})} allows us to invert the operator under consideration.

\begin{lem}\label{lem:apriori}
 Let $\Omega\subset\R^n$ be a bounded Lipschitz domain. Let $V$ and $q$ be as in \textnormal{(\hyperref[assV]{i})}-\textnormal{(\hyperref[assq]{ii})} in $\Omega$. Then there is a discrete set $\Sigma_{V,q}\subset\R$ such that for every $k^2\notin\Sigma_{V,q}$, $k\geq 1$, there exists a unique solution $u\in H^1(\Omega)$ of 
\begin{align}
\label{eq:beretta}
\begin{split}
\big(\Delta+k^2q+V\big)u &= f \;\mbox{ in } \Omega,\\
u &= 0 \; \mbox{ on }\p\Omega,
\end{split}
\end{align}
where $f\in L^2(\Omega)$.
In addition, there is a constant $C>0$ depending on $\Omega$, $\kappa$ and $\|V\|_{L^\infty(\Omega)}$ such that
\begin{align*}
\|u\|_{H^1(\Omega)}\leq C\left(1+\frac{k^3}{\dist(k^2,\Sigma_{V,q})}\right)\|f\|_{L^2(\Omega)}.
\end{align*}

\end{lem}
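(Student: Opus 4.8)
The plan is to follow the classical Fredholm-theoretic scheme for the Helmholtz-type operator, but with careful bookkeeping of the $k$-dependence so as to obtain the stated estimate with the explicit factor $1+k^3/\dist(k^2,\Sigma_{V,q})$. First I would rewrite \eqref{eq:beretta} as a perturbation of an invertible reference operator: set $L_0 := -\Delta - V$, which by assumption \textnormal{(\hyperref[assV]{i})} has $0$ as a non-eigenvalue, hence $L_0 \colon H^1_0(\Omega)\cap H^2_{loc} \to H^{-1}(\Omega)$ (or, after the standard $L^2$-based elliptic theory for Lipschitz domains, $L_0^{-1}\colon L^2(\Omega)\to H^1_0(\Omega)$) is a well-defined bounded operator. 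The equation $(\Delta + k^2 q + V)u = f$ with $u|_{\p\Omega}=0$ is then equivalent to $u - k^2 L_0^{-1}(q u) = -L_0^{-1} f$, i.e. $(\Id - k^2 T)u = -L_0^{-1}f$ with $T := L_0^{-1} M_q$. Since $M_q\colon L^2\to L^2$ is bounded and $L_0^{-1}\colon L^2 \to H^1_0(\Omega)$ is bounded with compact inclusion $H^1_0(\Omega)\hookrightarrow L^2(\Omega)$, the operator $T$ is compact on $L^2(\Omega)$ (and self-adjoint after symmetrization, or at least with real spectrum via the weighted inner product $\langle u,v\rangle_q := \int q^{-1}u\bar v$, which is equivalent to the $L^2$ inner product by \textnormal{(\hyperref[assq]{ii})}). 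Hence by the spectral theorem / Fredholm alternative, $\Sigma_{V,q}:=\{\lambda_n^{-1}\}$, the reciprocals of the nonzero eigenvalues of $T$, is a discrete subset of $\R$, and for $k^2\notin \Sigma_{V,q}$ the operator $\Id - k^2 T$ is invertible on $L^2(\Omega)$; existence and uniqueness of $u\in H^1(\Omega)$ follow, after reading back $u = -(\Id-k^2T)^{-1}L_0^{-1}f \in H^1_0(\Omega)$.

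The quantitative estimate is the heart of the matter. The spectral decomposition in the $\langle\cdot,\cdot\rangle_q$-inner product gives an orthonormal basis $\{\phi_n\}$ with $T\phi_n = \lambda_n \phi_n$, and writing $-L_0^{-1}f = \sum c_n \phi_n$, one gets $u = \sum \frac{c_n}{1-k^2\lambda_n}\phi_n$, so that in the $L^2$-norm
\begin{align*}
\|u\|_{L^2(\Omega)}^2 \lesssim \sum_n \frac{|c_n|^2}{|1-k^2\lambda_n|^2}.
\end{align*}
The naive bound $|1-k^2\lambda_n|^{-1}$ is not uniformly controlled by $\dist(k^2,\Sigma_{V,q})^{-1}$ for the small eigenvalues $\lambda_n$ (equivalently the large elements $\lambda_n^{-1}$ of $\Sigma_{V,q}$), so one must split the sum. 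For indices with $|\lambda_n^{-1}| \le 2k^2$ one uses $|1-k^2\lambda_n| = |\lambda_n|\,|\lambda_n^{-1} - k^2| \ge \frac{1}{2k^2}\dist(k^2,\Sigma_{V,q})$ (using $|\lambda_n| = |\lambda_n^{-1}|^{-1} \ge (2k^2)^{-1}$), which produces the factor $k^2/\dist(k^2,\Sigma_{V,q})$; for indices with $|\lambda_n^{-1}| > 2k^2$ one has $|1-k^2\lambda_n|\ge \tfrac12$, giving a bounded contribution. This yields $\|u\|_{L^2(\Omega)} \lesssim (1 + k^2/\dist(k^2,\Sigma_{V,q}))\|L_0^{-1}f\|_{L^2(\Omega)} \lesssim (1 + k^2/\dist(k^2,\Sigma_{V,q}))\|f\|_{L^2(\Omega)}$. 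To upgrade to the $H^1$-norm with the stated power $k^3$, I would test the equation \eqref{eq:beretta} against $u$ itself: integrating by parts gives $\int |\nabla u|^2 = \int (k^2 q + V)|u|^2 - \int f u \lesssim (k^2 + \|V\|_\infty)\|u\|_{L^2}^2 + \|f\|_{L^2}\|u\|_{L^2}$, so $\|\nabla u\|_{L^2} \lesssim k\|u\|_{L^2} + \|f\|_{L^2}$, and combining with the $L^2$-bound gives $\|u\|_{H^1} \lesssim (1 + k^3/\dist(k^2,\Sigma_{V,q}))\|f\|_{L^2}$, since $k\cdot k^2/\dist(k^2,\Sigma_{V,q}) = k^3/\dist$ dominates the lower-order terms (recall $k\ge 1$ and, if $\dist(k^2,\Sigma_{V,q})$ is large, the constant $1$ absorbs the rest).

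The main obstacle is the careful treatment of the spectral splitting so that the dependence on $\dist(k^2,\Sigma_{V,q})$ comes out with exactly the factor $k^2$ (and not a worse power), together with making sure all intermediate constants depend only on $\Omega$, $\kappa$, $\|V\|_{L^\infty(\Omega)}$ and not on $k$. Two technical points deserve attention: (a) the a priori bound $\|L_0^{-1}f\|_{L^2}\lesssim \|f\|_{L^2}$ uses only \textnormal{(\hyperref[assV]{i})} and standard elliptic estimates on the Lipschitz domain $\Omega$ (closed-graph / Lax–Milgram on the complement of the eigenvalue), so its constant is $k$-independent; (b) symmetrizing $T$ via the weighted inner product requires the two-sided bound on $q$ from \textnormal{(\hyperref[assq]{ii})}, which also guarantees the equivalence of $\|\cdot\|_{L^2}$ and $\|\cdot\|_{\langle\cdot,\cdot\rangle_q}$ with constants depending only on $\kappa$. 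The regularity $q\in C^1$ plays no role here (only $q\in L^\infty$ with the sign condition is needed for this lemma). Finally, discreteness of $\Sigma_{V,q}$ and its independence of $k$ is already built into the compactness of $T$, so that part is immediate from the Riesz–Schauder theory.
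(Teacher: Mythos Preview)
Your proposal is correct and follows essentially the same route as the paper: reduce to $(\Id-k^2T)u=-L_0^{-1}f$ with $T=(-\Delta-V)^{-1}M_q$, use the spectral decomposition of $T$ to get the $L^2$ bound with factor $1+k^2/\dist(k^2,\Sigma_{V,q})$, and then test the equation against $u$ to pick up one more power of $k$ for the $H^1$ bound. The only cosmetic difference is that the paper avoids your case split by writing $\frac{1}{1-k^2\alpha_n}=1+\frac{k^2}{\lambda_n-k^2}$ directly, which gives $|u_n|\le\big(1+\tfrac{k^2}{\dist(k^2,\Sigma_{V,q})}\big)|h_n|$ in one line; also, the correct weight for symmetrizing $T$ is $\int q\,u\bar v$ rather than $\int q^{-1}u\bar v$, though this does not affect the argument.
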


\begin{proof}
Recalling that zero is not a Dirichlet eigenvalue of $\D + V$ in $\Omega$, we consider the operator $T=(-\Delta-V)^{-1}M_q:H_0^1(\Omega)\to H_0^1(\Omega)$, where $M_q$ denotes  the multiplication operator $M_qu=qu$. 
Then $T$ has  eigenvalues $\alpha_n \in \R$ with $\alpha_n\to 0$ as $n\to\infty$.
Let $\Sigma_{V,q}=\{\lambda_n=\alpha_n^{-1}\}_{n\in\N}$ and let $\{e_n\}_{n\in\N}$ be an orthonormal basis of $L^2(\Omega)$ with $Te_n=\alpha_n e_n$.

Notice that \eqref{eq:beretta} is equivalent to $(I-k^2 T)u=(-\Delta-V)^{-1}f=:h$, $u\in H^1_0(\Omega)$. If $k^2\notin\Sigma_{V,q}$, by the Fredholm alternative, there is a unique solution $u$ to this problem.
Moreover, we can write  $u=\sum_{n\in\N} u_n e_n$, where  $u_n=( u,e_n )_{L^2(\Omega)}$ is given by
\begin{align*}
(1-k^2\alpha_n)u_n=h_n= ( h,e_n )_{L^2(\Omega)} \quad \mbox{ i.e.} \quad u_n= \frac{1}{1-{k^2}{\alpha_n}}h_n=\left(1+\frac{k^2}{\lambda_n-k^2}\right)h_n.
\end{align*}
Therefore,
\begin{align*}
\|u\|_{L^2(\Omega)}^2=\sum_{n\in\N}|u_n|^2
\leq \left(1+\frac{k^2}{\dist(k^2,\Sigma_{V,q})}\right)^2\sum_{n\in\N}|h_n|^2
=\left(1+\frac{k^2}{\dist(k^2,\Sigma_{V,q})}\right)^2\|h\|^2_{L^2(\Omega)}.
\end{align*}
Taking into account that $\|h\|_{L^2(\Omega)}\leq C\|f\|_{L^2(\Omega)}$, we conclude 
\begin{align*}
\|u\|_{L^2(\Omega)}^2
\leq C\left(1+\frac{k^2}{\dist(k^2,\Sigma_{V,q})}\right)\|f\|_{L^2(\Omega)}.
\end{align*}
Finally, testing the equation with $u$, we obtain $\|\nabla u\|_{L^2(\Omega)}\leq C(1+k)\|u\|_{L^2(\Omega)}+\|f\|_{L^2(\Omega)}$, which  in combination with the previous estimate yields the desired result.
\end{proof}

\subsubsection{Notation}

For $s\in \R$, the whole space Sobolev spaces are denoted by 
\begin{align*}
H^s(\R^n):=\big\{f\in \mathcal{S'}(\R^n): \|(1+|\cdot|^2)^{\frac s 2}\mathcal F f \|_{L^2(\R^n)} < \infty\big\},
\end{align*}
where
\begin{align*}
\mathcal F f(\xi)=\int_{\R^n} f(x)e^{-ix\cdot\xi}dx
\end{align*}
denotes the Fourier transform.

Let $\Omega\subset \R^n$ be an open set, then we define
\begin{align*}
H^s(\Omega)&:=\big\{f|_\Omega: f\in H^s(\R^n)\big\}, \mbox{ equipped with the quotient topology},\\
\tilde H^s(\Omega)&:=\mbox{ closure of } C^\infty_c(\Omega) \mbox{ in } H^s(\R^n).
\end{align*}
For any $s\in\R$ these spaces satisfy
\begin{align*}
\big(H^s(\Omega)\big)^*=\tilde H^{-s}(\Omega), \ \ \big(\tilde H^s(\Omega)\big)^*= H^{-s}(\Omega).
\end{align*}
In addition, for $\Gamma\subset\p\Omega$, we set
\begin{align*}
\tilde H^{\half}(\Gamma)&:=\big\{ f\in H^{\half}(\p\Omega): \supp f\subseteq\Gamma\big\},
\end{align*}
which is a closed subspace of $H^{\half}(\p\Omega)$ and its dual space may be identified with $ H^{-\half}(\Gamma)$.
We denote by $(\cdot,\cdot)_{L^2(\Omega)}$ the inner product in $L^2(\Omega)$ and also use the abbreviation $(\cdot, \cdot)_{\partial \Omega}$ to denote $(\cdot, \cdot)_{L^2(\partial \Omega)}$.

Furthermore, for $r>0$ and  $x_0\in\R^n$, we denote  the $n-$dimensional ball by $B_r(x_0)\subset\R^n$  and we define the cylindrical $(n+1)$-dimensional domain $Q_r(x_0):=B_r(x_0)\times(-r, r)\subset \R^{n+1}$. In addition, given an open set $\Omega\subset\R^n$, $B_r^+(x_0):=B_r(x_0)\cap\Omega$.

\section{Quantitative Unique Continuation}
\label{sec:qUCP_without_geo}

In this section we begin our analysis of the Runge approximation properties for the acoustic Helmholtz equation by proving quantitative unique continuation results without geometric assumptions on the underlying domains. Here we only assume the validity of the conditions \textnormal{(\hyperref[assV]{i})}-\textnormal{(\hyperref[assq]{ii})} (not necessarily the condition \textnormal{(\hyperref[assSpec]{a1})}). The Runge approximation properties will be deduced as dual results in the next section.
Since in this case exponential losses in $k$ are expected to be unavoidable (see \cite{BM20} for a proof of this in the closely related three balls inequalities), we do not prove these estimates by carefully tracking the $k$-dependence in the original equations but by embedding these equations into a family of elliptic equations without a large parameter but in an additional dimension. This is achieved by passing from $u(x)$ to $\tilde{u}(x,t) = e^{kt}u(x)$. We emphasize that this is a well-known procedure (see for instance \cite{LL12} and the references therein). The corresponding unique continuation properties follow from well-known results in the literature (e.g. \cite{ARRV09}). The main novelty of this first part of our article -- in which we do not pose geometric assumptions on our domains -- are the quantitative (in $k$) Runge approximation results and the application of these to the stability of the partial data inverse problem which are deduced in the next sections.

Formulated for the original function $u$ the unique continuation properties read as follows:

\begin{prop}
\label{prop:quantitative_UCP}
Let $\Omega$ be an open, bounded, connected Lipschitz domain and let $\Gamma\subset \p\Omega$ be a non-empty relatively open subset. Let $V$ and $q$ be as in \textnormal{(\hyperref[assV]{i})}-\textnormal{(\hyperref[assq]{ii})} in $\Omega$. Let $u\in H^1(\Omega)$ be a solution to 
\begin{align}\label{eq:eqinOm}
\begin{split}
\Delta u+k^2qu+Vu&=0   \;\mbox{ in } \Omega,
\end{split}
\end{align}
and let $M$, $\eta$ be such that
\begin{align*}
\|u\|_{H^1(\Omega)} &\leq M,\\
\|u\|_{H^{\half}(\Gamma)}+\|\p_{\nu} u \|_{H^{-\half}(\Gamma)}&\leq \eta.
\end{align*}
Then there exist  a parameter $\mu\in(0,1)$ and a constant $C>1$ depending on $n, \Omega, \Gamma, \|V\|_{L^\infty(\Omega)}, \kappa$ and $\|q\|_{C^1(\Omega)}$   such that 
\begin{align}\label{eq:QUCPboundary}
\|u\|_{L^2(\Omega)}\leq C k\left|\log\left(\frac{\eta}{M+\eta}\right)\right|^{-\mu}({M+\eta}).
\end{align}
In addition, if $G$ is a bounded Lipschitz domain with $G\Subset \Omega$, then there exist a parameter $\nu\in(0,1)$ and a constant $C>1$ (depending on $n, \Omega, G, \Gamma, \|V\|_{L^\infty(\Omega)}, \kappa$ and $\|q\|_{C^1(\Omega)}$) such that
\begin{align}\label{eq:QUCPinterior}
\|u\|_{L^2(G)}\leq  Ce^{C k}\left(\frac{\eta}{M+\eta}\right)^\nu (M+\eta).
\end{align}
\end{prop}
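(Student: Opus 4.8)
The strategy is the standard Fourier-extension/Caccioppoli-in-an-added-dimension device followed by a propagation-of-smallness (three-balls / chain-of-balls) argument applied to the lifted elliptic equation. I would first lift the equation by setting $\tilde u(x,t) := e^{kt} u(x)$ on the cylinder $\tilde\Omega := \Omega \times (-1,1) \subset \R^{n+1}$. A direct computation gives
\begin{align*}
\Delta_{x,t}\tilde u = e^{kt}(\Delta_x u + k^2 u) = e^{kt}\big( -k^2 q u - V u + k^2 u\big) = \big((1-q)k^2 - V\big)\tilde u,
\end{align*}
so $\tilde u$ solves an elliptic equation $\Delta_{x,t}\tilde u = \tilde V(x) \tilde u$ with coefficient $\tilde V = (1-q)k^2 - V$, which is now a \emph{non-parameter-dependent} second-order elliptic operator with $C^1$ (indeed, for the zeroth-order term, $L^\infty$) coefficients — the $k$-dependence has been moved entirely into the $L^\infty$ size of the potential and into the exponential weight $e^{kt}$. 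Note the hypothesis $q \in C^1$ is what makes $\tilde V$ admissible for the quantitative unique continuation results for variable-coefficient elliptic equations (e.g.\ \cite{ARRV09}); there the dependence of the constants on $\|\tilde V\|_{L^\infty}$ is explicit and polynomial-in-the-norm inside an exponential, which is what produces the $e^{Ck}$ and $Ck$ prefactors.

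Next I would record the relevant norm bounds for $\tilde u$ on the cylinder and on the lateral boundary piece $\tilde\Gamma := \Gamma \times (-1,1)$. Since $e^{kt}$ is bounded above and below by $e^{\pm k}$ on $t \in (-1,1)$, one gets $\|\tilde u\|_{H^1(\tilde\Omega)} \lesssim e^{k}\|u\|_{H^1(\Omega)} \le e^k M$ and, for the Cauchy data on $\tilde\Gamma$, $\|\tilde u\|_{H^{\half}(\tilde\Gamma)} + \|\partial_\nu \tilde u\|_{H^{-\half}(\tilde\Gamma)} \lesssim e^{k}\big(\|u\|_{H^{\half}(\Gamma)} + \|\partial_\nu u\|_{H^{-\half}(\Gamma)}\big) \le e^k \eta$ (the lateral normal derivative of $\tilde u$ involves only $\partial_\nu u$ since the weight depends on $t$ only, so no extra $k$ factor beyond $e^k$). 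Then I apply the quantitative unique continuation estimate for the lifted elliptic equation: from small Cauchy data on $\tilde\Gamma$ one propagates smallness first to a small ball inside $\tilde\Omega$ near $\tilde\Gamma$ (with a logarithmic modulus, giving the boundary estimate \eqref{eq:QUCPboundary}), and then, via a chain of overlapping balls connecting that ball to $G \times \{0\}$ inside the connected set $\tilde\Omega$, one upgrades to an interior estimate with a Hölder modulus $\big(\eta/(M+\eta)\big)^\nu$ on the interior subdomain $G \times (-\tfrac12,\tfrac12)$; this chain has finitely many links, the number depending only on the geometry of $\Omega$ and $G$, so each link contributes a fixed power and the composite exponent $\nu \in (0,1)$ and constant depend only on the allowed data. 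Finally, restrict back: $\|u\|_{L^2(G)}^2 \cdot \tfrac12 \le \|u\|_{L^2(G)}^2 \int_{-1/2}^{1/2} e^{-2kt}\,dt \cdot \tfrac{1}{\text{something}}$ — more carefully, since $\tilde u(x,0) = u(x)$, one has $\|u\|_{L^2(G)} = \|\tilde u(\cdot,0)\|_{L^2(G)} \lesssim \|\tilde u\|_{H^1(G\times(-1/2,1/2))}$ by a trace estimate in the $t$-variable, and an interpolation bound of the lifted-equation solution controls this $H^1$ norm by the $L^2$ norm and the full $H^1$ bound $e^k M$; collecting the factors $e^k$ produces the stated $e^{Ck}$ prefactor in \eqref{eq:QUCPinterior}, and similarly the $Ck$ prefactor in \eqref{eq:QUCPboundary} (there the modulus is only logarithmic, so the $e^k$ from the $H^1$ bound is absorbed into the constant while the genuine $k$-loss comes from the $L^\infty$ size of $\tilde V$ appearing \emph{inside} the three-balls exponent, which degrades the logarithmic rate by a factor $\sim k$ rather than exponentially).

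The main obstacle is bookkeeping the $k$-dependence cleanly through the two different moduli of continuity: one must invoke a version of the quantitative unique continuation / three-balls inequality in which the dependence on the size of the zeroth-order coefficient $\|\tilde V\|_{L^\infty} \sim k^2$ is made \emph{explicit}, and check that it enters the boundary (logarithmic) estimate only polynomially-in-$k$ as a prefactor — giving $Ck$ — while in the interior (Hölder) estimate the iteration over the finite chain of balls turns a $k$-dependent single-step constant into the exponential $e^{Ck}$. A secondary technical point is handling the Lipschitz (rather than smooth) boundary and the relatively open $\Gamma$: one localizes near $\tilde\Gamma$, flattens the boundary, and uses a Carleman estimate / boundary three-balls inequality valid for Lipschitz domains as in \cite{ARRV09, RS17a}, which is by now standard. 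I would also remark that the passage $u \mapsto e^{kt}u$ requires nothing on the spectrum of the original operator — consistent with the statement, assumption \textnormal{(\hyperref[assSpec]{a1})} is \emph{not} used here and only enters later, through Lemma~\ref{lem:apriori}, when the dual Runge statements are proved.
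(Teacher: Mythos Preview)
Your lifting differs from the paper's, and the difference matters. You set $\tilde u(x,t)=e^{kt}u(x)$ and compute with the \emph{flat} Laplacian $\Delta_{x,t}$, obtaining $\Delta_{x,t}\tilde u=\tilde V\tilde u$ with $\tilde V=(1-q)k^2-V$. The paper instead uses the operator $\Delta_x+q(x)\partial_t^2+V$: then $(\Delta_x+q\partial_t^2+V)\tilde u=e^{kt}(\Delta u+k^2qu+Vu)=0$, so $q$ is absorbed into the \emph{principal symbol} and the lifted equation has \emph{no} $k$-dependence whatsoever. This is precisely why $q\in C^1$ is needed in the paper (principal coefficients must be $C^1$ for the three-balls results of \cite{ARRV09}), whereas in your approach $\tilde V$ is only a zeroth-order term and $L^\infty$ would suffice---your sentence attributing the $C^1$ hypothesis to the admissibility of $\tilde V$ is misplaced. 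The paper's payoff is that the three-balls constants from \cite{ARRV09} are $k$-independent; all of the $e^{Ck}$ in Lemma~\ref{lem:3balls} comes solely from the weight conversion $e^{\pm kd}$. In your version the three-balls constant depends on $\|\tilde V\|_{L^\infty}\sim k^2$, and you assert without justification that this dependence gives $e^{Ck}$; standard Carleman arguments give thresholds $\tau_0\gtrsim\|\tilde V\|_{L^\infty}^{2/3}$ (or worse), hence $e^{Ck^{4/3}}$ (or worse) rather than $e^{Ck}$. A $\sqrt{\|\tilde V\|_{L^\infty}}$-type bound \emph{is} available (Donnelly--Fefferman scaling), but you would need to invoke and cite it explicitly.

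There is also a genuine gap in your derivation of \eqref{eq:QUCPboundary}. You write that ``one propagates smallness first to a small ball \dots\ (with a logarithmic modulus, giving the boundary estimate \eqref{eq:QUCPboundary})''. But \eqref{eq:QUCPboundary} is an estimate on $\|u\|_{L^2(\Omega)}$, the \emph{full} domain, not on a nearby ball; and the boundary-to-ball step in \cite{ARRV09} gives a H\"older modulus, not a logarithmic one. The logarithm arises only after the splitting $\Omega=W_\epsilon\cup\Omega_\epsilon$ (boundary layer versus interior), bounding $\|u\|_{L^2(W_\epsilon)}\le C\epsilon^{1/p}M$ by H\"older--Sobolev, covering $\Omega_\epsilon$ by a chain of $N(\epsilon)\sim|\log\epsilon|$ balls so that the iterated H\"older exponent is $\alpha^{N(\epsilon)}\sim\epsilon^{C_2}$, and then optimising in $\epsilon$ against the $e^{Ck}$ prefactor. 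This optimisation is what produces both the log and the polynomial $k$-prefactor; it is not a consequence of a single propagation step, and you have omitted it entirely. Your treatment of \eqref{eq:QUCPinterior} is closer to correct, since there $G\Subset\Omega$ and a finite chain of balls suffices.
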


As an auxiliary ingredient, the proof of Proposition ~\ref{prop:quantitative_UCP} uses the following three-balls (boundary-bulk) inequalities derived from \cite{ARRV09}: 

\begin{lem}\label{lem:3balls}
Under the same assumptions as in Proposition ~\ref{prop:quantitative_UCP},  there exist a parameter ${\alpha \in (0,1)}$  and a constant $C>1$ depending on $\Omega, \|V\|_{L^{\infty}(\Omega)}, \kappa$ and $\|q\|_{C^1(\Omega)}$ such that
\begin{align}\label{eq:3balls}
\|u\|_{L^2(B_{r}(x_0))} \leq C e^{C k} \|u\|_{L^2(B_{2r}(x_0))}^{1-\alpha} \|u\|_{L^2(B_{r/2}(x_0))}^{\alpha},
\end{align}
where $x_0 \in \Omega$ and $r>0$ are such that $B_{4r}(x_0)\subset \Omega $.
\\
In addition, there exist a parameter $\alpha_0 \in (0,1)$ and a constant $C>1$ depending on $\Omega, \Gamma, \|V\|_{L^{\infty}(\Omega)}, \kappa$ and $\|q\|_{C^1(\Omega)}$ such that
\begin{align}\label{eq:3ballsboundary}
\|u\|_{L^2(B_{r}^+(x_0))} \leq C e^{C k} \big(\|u\|_{L^2(B_{2r}^+(x_0))}+\eta\big)^{1-\alpha_0} \eta^{\alpha_0},
\end{align}
where $x_0 \in \Gamma$ and $r>0$ are such that $B_{4r}(x_0)\cap (\partial \Omega \backslash\Gamma) = \emptyset$ 
and $B^+_r(x_0)=B_r(x_0)\cap\Omega$.
\end{lem}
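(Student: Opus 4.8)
The plan is to derive both inequalities in Lemma~\ref{lem:3balls} from the known three-balls/quantitative-Runge-type estimates in \cite{ARRV09} by first removing the large parameter $k$ via the standard lift to one higher dimension, $\tilde u(x,t) := e^{kt}u(x)$. Indeed, if $u$ solves $\Delta u + k^2 q u + V u = 0$ in $\Omega$, then $\tilde u$ solves
\begin{align*}
\Delta_x \tilde u + \p_t^2 \tilde u + (k^2 q(x) - k^2)\tilde u + V(x)\tilde u = \Delta_x\tilde u + \p_t^2\tilde u + \big(k^2(q(x)-1)+V(x)\big)\tilde u = 0
\end{align*}
in the cylinder $\Omega\times\R$. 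Here the point is that, thanks to (\hyperref[assq]{ii}), the coefficient $q(x)-1$ contributes to the \emph{principal part}: writing the operator as $\di(A\nabla_{x,t}\tilde u)$ with $A = \mathrm{diag}(q(x)^{-1},\dots,q(x)^{-1},1)$ after dividing by $q$ (or, more directly, keeping $k^2(q-1)$ as a lower-order term when $|q-1|$ is small, and otherwise absorbing it into the leading coefficients), one obtains a second-order elliptic equation on an $(n{+}1)$-dimensional domain whose principal coefficients are $C^1$ (using $q\in C^1$) and whose zeroth-order coefficient $k^2(q-1)+V$ is in $L^\infty$ with norm $\lesssim k^2$. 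The advantage is that \emph{no large parameter multiplies the derivatives}, so the quantitative unique continuation estimates of \cite{ARRV09} apply with constants that depend only on the $C^1$-norm of the principal part and the $L^\infty$-norm of the potential; tracking how the latter ($\sim k^2$) enters the constants in \cite{ARRV09} produces exactly the factor $e^{Ck}$.

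Concretely, for the interior estimate \eqref{eq:3balls} I would proceed as follows. Fix $x_0\in\Omega$ and $r>0$ with $B_{4r}(x_0)\subset\Omega$. Apply the three-balls inequality of \cite{ARRV09} to $\tilde u$ on concentric $(n{+}1)$-dimensional balls centered at $(x_0,0)$, say of radii $r/2$, $2r$ and $4r$ (which sit inside $\Omega\times(-4r,4r)\Subset\Omega\times\R$): this yields $\|\tilde u\|_{L^2(\tilde B_r)} \leq C e^{Ck}\|\tilde u\|_{L^2(\tilde B_{2r})}^{1-\alpha}\|\tilde u\|_{L^2(\tilde B_{r/2})}^{\alpha}$, where the exponential $e^{Ck}$ records the dependence of the \cite{ARRV09} constant on $\|k^2(q-1)+V\|_{L^\infty}$ (the constant there depends at worst exponentially on the square root of the zeroth-order coefficient norm, which is $\sim k$). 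Then I undo the substitution: since $\tilde u(x,t)=e^{kt}u(x)$, on any set of the form $B_\rho(x_0)\times(-\rho,\rho)$ one has $\|u\|_{L^2(B_\rho)}\, c(\rho,k)\leq \|\tilde u\|_{L^2(B_\rho\times(-\rho,\rho))}\leq \|u\|_{L^2(B_\rho)}\,C(\rho,k)$ with $c,C$ comparable up to factors $e^{\pm k\rho}$; choosing the cylinders to have height comparable to the radii and absorbing all the resulting $e^{\pm ck}$ factors into the constant $e^{Ck}$ gives \eqref{eq:3balls}. (One may instead use genuine balls rather than cylinders in $\tilde B$; the bookkeeping is identical up to adjusting radii so that $B_{4r}\times(-R,R)$ contains the relevant $(n{+}1)$-ball.)

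For the boundary version \eqref{eq:3ballsboundary} the strategy is the same but one invokes the \emph{boundary} three-balls/propagation-of-smallness estimate from \cite{ARRV09} on the lifted domain $\Omega\times\R$ near the boundary piece $\Gamma\times\R$, using that the Cauchy data of $\tilde u$ on $\Gamma\times(-R,R)$ are controlled by those of $u$ on $\Gamma$: $\|\tilde u\|_{H^{1/2}} + \|\p_\nu\tilde u\|_{H^{-1/2}} \lesssim e^{cR k}\big(\|u\|_{H^{1/2}(\Gamma)}+\|\p_\nu u\|_{H^{-1/2}(\Gamma)}\big) \lesssim e^{Ck}\eta$. Applying the boundary estimate of \cite{ARRV09} with these Cauchy data, and then restricting back to $u$ on half-balls $B_r^+(x_0)$, $B_{2r}^+(x_0)$ as above, gives \eqref{eq:3ballsboundary} with the interpolation exponent $\alpha_0$ inherited from \cite{ARRV09} and the $\eta$ additive term coming from the Cauchy-data contribution. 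The main obstacle I anticipate is the bookkeeping of the $k$-dependence: one must verify that the constants in the quantitative estimates of \cite{ARRV09} (which are stated for a fixed elliptic equation) depend on the $L^\infty$-norm of the zeroth-order coefficient only through $e^{C\sqrt{\|\cdot\|_{L^\infty}}}=e^{Ck}$ and not worse, and that the Lipschitz/cone geometry of $\Omega\times\R$ near $\Gamma\times\R$ inherits uniform constants from that of $\Omega$ near $\Gamma$ — both are standard but require care. A secondary technical point is handling the case where $q$ is not close to $1$, so that $k^2(q-1)$ genuinely cannot be treated perturbatively; there one divides the lifted equation by $q$ (legitimate by the uniform positivity in (\hyperref[assq]{ii})), turning it into a uniformly elliptic equation $\di(q^{-1}\nabla_x\tilde u)+q^{-1}\p_t^2\tilde u + \dots$ with $C^1$ principal coefficients and an $L^\infty$ zeroth-order term of size $\lesssim k^2$, after which the same argument applies verbatim.
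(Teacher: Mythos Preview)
Your overall strategy---lift via $\tilde u(x,t)=e^{kt}u(x)$, apply the interior and boundary three-balls estimates of \cite{ARRV09} in one higher dimension, then transfer back---is exactly the paper's. But you miss the one-line simplification that makes the paper's argument clean: since $\partial_t^2\tilde u = k^2\tilde u$, the term $k^2 q(x)\tilde u$ equals $q(x)\partial_t^2\tilde u$, so the lifted equation is simply
\[
\big(\Delta_x + q(x)\,\partial_t^2 + V(x)\big)\tilde u = 0,
\]
a uniformly elliptic equation with $C^1$ principal coefficients (by (\hyperref[assq]{ii})) and zeroth-order term $V$ that is \emph{independent of $k$}. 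Applying \cite{ARRV09} then gives three-cylinders and boundary inequalities whose constants $C,\alpha,\alpha_0$ do not depend on $k$ at all; the factor $e^{Ck}$ arises \emph{solely} from the norm comparison $2r\,e^{-kd}\|u\|_{L^2(B_r)}\le\|\tilde u\|_{L^2(Q_r)}\le 2r\,e^{kd}\|u\|_{L^2(B_r)}$ (with $d=\mathrm{diam}(\Omega)$) and, for the boundary estimate, from $\tilde\eta\le Ce^{Ck}\eta$.

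Your version instead keeps $k^2(q-1)+V$ as an $L^\infty$ potential of size $\sim k^2$ and proposes to track how the constants in \cite{ARRV09} depend on it, hoping for $e^{C\sqrt{\|\cdot\|_{L^\infty}}}$ behaviour. That can be made to work, but it is the extra labour you yourself flag as ``the main obstacle'', and it also obliges you to check that the interpolation \emph{exponent} $\alpha$ stays $k$-independent, not just the prefactor. Your ``divide by $q$'' fix does not remove the difficulty: dividing $\Delta_x+\partial_t^2+k^2(q-1)+V$ by $q$ still leaves a zeroth-order term of size $\sim k^2$, and the matrix $A=\mathrm{diag}(q^{-1},\dots,q^{-1},1)$ you write down is not what that division produces---it is precisely what one gets from the paper's operator $\Delta_x+q\,\partial_t^2$ after dividing by $q$. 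In short, you were one identity ($k^2\tilde u=\partial_t^2\tilde u$) away from the clean route.
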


In order to invoke the quantitative uniqueness results for elliptic equations without a large parameter, we pass to equations in an additional dimension which is a well-known method in quantitative uniqueness for eigenfunctions \cite{LL12, L18}. We remark that in the setting of Helmholtz equations where $q=1$, the $k$-dependence in this result is optimal as proved in \cite{BM20}.

\begin{proof}[Proof of Lemma ~\ref{lem:3balls}]
Let $\tilde \Omega :=\Omega\times(-d,d)$, with $d=\diam(\Omega)$.  Let $\tilde u\in H^1(\tilde\Omega)$ be a solution to 
\begin{align*}
\big(\D+q(x)\p_t^2+V(x)\big)\tilde u(x,t)=0 \quad \mbox{for}\quad (x,t)\in\tilde \Omega,
\end{align*}
where $\D$ denotes the Laplacian in $x$. Recalling the assumption \textnormal{(\hyperref[assq]{ii})}, we observe that the operator $\D + q(x)\p_t^2$ is elliptic with $C^1$ coefficients. Hence, the results from \cite{ARRV09} are applicable.
Using the notation $Q_r(x_0)=B_r(x_0)\times(-r,r)$,
by \cite[Theorem 1.10]{ARRV09}, there exist $C>1$ and $\alpha\in(0,1)$ depending on $\Omega, \|V\|_{L^\infty(\Omega)}, \kappa$ and $\|q\|_{C^1(\Omega)}$ such that
\begin{align}\label{eq:3cylinders}
\|\tilde u\|_{L^2(Q_r(x_0))}
\leq C \|\tilde u\|_{L^2(Q_{2r}(x_0))}^{1-\alpha}\|\tilde u\|_{L^2(Q_{\sfrac{r}{2}}(x_0))}^\alpha,
\end{align}
where $B_{4r}(x_0)\subset\Omega$.

We now consider the particular solution $\tilde u(x,t)=e^{kt}u(x)$, with $u(x)$ satisfying \eqref{eq:eqinOm}. 
Then \eqref{eq:3balls} follows from \eqref{eq:3cylinders}  
together with the observation that
\begin{align*}
2r e^{-kd} \|u\|_{L^2(B_r(x_0))}\leq \|\tilde u\|_{L^2(Q_r(x_0))}
\leq 2r e^{kd} \|u\|_{L^2(B_r(x_0))}.
\end{align*}
Inserting this into \eqref{eq:3cylinders} concludes the proof of \eqref{eq:3balls}.

In order to obtain \eqref{eq:3ballsboundary}, we observe that similarly, by \cite[Theorem 1.7]{ARRV09}, there exist $C>1$ and $\alpha_0\in(0,1)$ depending on $\Omega, \Gamma,  \|V\|_{L^\infty(\Omega)}, \kappa$ and $\|q\|_{C^1(\Omega)}$ such that
\begin{align*}
\|\tilde u\|_{L^2(Q_r(x_0)\cap \tilde\Omega)}
\leq C \big(\|\tilde u\|_{L^2(Q_{2r} \cap \tilde{\Omega})}+\tilde\eta\big)^{1-\alpha_0} \tilde\eta^{\alpha_0}.
\end{align*}
Here  $\tilde \Gamma=\Gamma\times(-d,d)$ and
\begin{align*}
\|\tilde u\|_{H^{\half}(\tilde\Gamma)}+\|\p_{\nu} \tilde  u \|_{H^{-\half}(\tilde\Gamma)}\leq\tilde\eta.
\end{align*}
The requirement  $\dist(Q_r(x_0)\cap \tilde\Omega, \p\tilde\Omega\backslash\tilde \Gamma)>0$  is satisfied since $B_{4r}(x_0)\cap (\partial \Omega \backslash\Gamma) = \emptyset$.

Notice that on the lateral boundary $\tilde{\Gamma}$ the normal derivative does not have any contribution in the $t$ direction. Therefore, using the definition of the weak form of $\p_{\nu}\tilde{u}$ in terms of the bilinear form associated with \eqref{eq:eqinOm}, choosing $\tilde u(x,t)=e^{kt}u(x)$ and  $\tilde\eta=Ce^{Ck}\eta$,  the estimate \eqref{eq:3ballsboundary} follows as above.
\end{proof}

With Lemma ~\ref{lem:3balls} available, we next address the proof of Proposition ~\ref{prop:quantitative_UCP}.

\begin{proof}[Proof of Proposition ~\ref{prop:quantitative_UCP}]
Let us define 
\begin{align}\label{eq:sets}
\begin{split}
W_\epsilon& : =\{x\in \Omega:  \dist(x, \p\Omega)<\epsilon\},\\
\Omega_\epsilon& : =\{x\in \Omega:  \dist(x, \p\Omega)\geq\epsilon\},
\end{split}
\end{align}
for $\epsilon\in (0, \epsilon_0)$, for some $\epsilon_0<1$ such that $\Omega_{\epsilon_0}$ is connected.
 We argue in three steps, estimating $u$ separately on $W_\epsilon $ and on $\Omega_\epsilon$ and combining these bounds by means of a final optimization step (in $\epsilon$).

\textit{Step 1: Estimate on $W_\epsilon$.} By  the H\"older and Sobolev inequalities we have
\begin{align}\label{eq:QUCstep1}
\|u\|_{L^2(W_\epsilon)}
\leq C\epsilon^{\frac{1}{p}}\|u\|_{L^q(W_\epsilon)}
\leq C \epsilon^{\frac{1}{p}}\|u\|_{H^1(\Omega)}
\leq C \epsilon^{\frac{1}{p}}M,
\end{align}
with $\frac{1}{p}+\frac{1}{q}=\frac{1}{2}$ and the constant $C>0$ depending on $\Omega$.

\textit{Step 2: Estimate on $\Omega_\epsilon$.}
We use Lemma ~\ref{lem:3balls} to propagate the smallness of $\eta$ to $\|u\|_{L^2(\Omega_\epsilon)}$.
Firstly, we transport the information from the boundary to the interior of $\Omega_\epsilon$. Let $x_0 \in \Gamma$ and $r_0>0$ such that $B_{4r_0}(x_0)\cap (\p\Omega\backslash\Gamma)=\emptyset.$
Then by  \eqref{eq:3ballsboundary}
 it holds
\begin{align*}
\|u\|_{L^2(B_{r_0}^+(x_0))}\leq C e^{Ck}
(M+\eta)^{1-\alpha_0}
\eta^{\alpha_0}.
\end{align*}

Once we have reached the interior, we iterate \eqref{eq:3balls} along a chain of balls which cover $\Omega_\epsilon$ and such that  $B_{4r}(x)\subset \Omega$. This implies that it is necessary to iterate \eqref{eq:3balls} roughly $N\sim N_0-C\log \epsilon$ times, where $N_0$ and $C$ depend on $\Omega$. 
Therefore, we obtain
\begin{align}\label{eq:QUCstep2}
\|u\|_{L^2(\Omega_\epsilon)}
\leq C e^{\frac{C}{1-\alpha}k}(M+\eta)^{1-\alpha_0\alpha^N} \eta^{\alpha_0\alpha^N}.
\end{align}

\textit{Step 3: Optimization.}
Combining  \eqref{eq:QUCstep1} and \eqref{eq:QUCstep2}, we obtain
\begin{align*}
\|u\|_{L^2(\Omega)}
\leq C\left(\epsilon^{\frac{1}{p}}+e^{Ck}\left(\frac{\eta}{M+\eta}\right)^{C_1\epsilon^{C_2}}\right)(M+\eta),
\end{align*}
where the constants depend on $\Omega, \Gamma, \|V\|_{L^\infty(\Omega)}, \kappa$ and $\|q\|_{C^1(\Omega)}$.
Abbreviating 
\begin{align}\label{eq:renaming}
\tilde{\epsilon}:= \epsilon^{C_2}, \quad \tilde{\eta}:= \left(\frac{\eta}{\eta + M} \right)^{C_1}, \quad \gamma:= \frac{1}{C_2 p},
\end{align}
we thus seek to optimize the expression
\begin{align*}
F(\tilde{\epsilon}, \tilde{\eta}):= \tilde{\epsilon}^{\gamma} + e^{Ck} \tilde{\eta}^{\tilde{\epsilon}}
\end{align*}
by choosing $\tilde{\epsilon} = \tilde{\epsilon}(\tilde{\eta})>0$ appropriately. Setting $\tilde{\epsilon}:= \frac{1}{(-\log\tilde{\eta})^{\beta}} + \frac{C k}{(-\log\tilde{\eta})}>0 $ for some $\beta \in (0,1)$, we obtain
\begin{align*}
|F(\tilde{\epsilon}, \tilde{\eta}) | \leq \left( \frac{1}{(-\log\tilde{\eta})^{\beta}} + \frac{k}{(-\log\tilde{\eta})} \right)^{\gamma} + e^{-|\log\tilde{\eta}|^{1-\beta}} \leq \frac{C k^{\gamma}}{|\log\tilde{\eta}|^{\beta \gamma}}+\frac{1}{|\log\tilde{\eta}|^{1-\beta}}.
\end{align*}
By \eqref{eq:renaming} we have $\gamma<1$ provided $p>2$ in \eqref{eq:QUCstep1} is chosen big enough. Then, for $k\geq 1$, in particular $k^\gamma<k$.
Choosing $\beta=\frac{1}{1+\gamma}$ we infer \eqref{eq:QUCPboundary} with $\mu=\beta\gamma<1$.
The bound \eqref{eq:QUCPinterior} follows directly from Step 2 for a  suitable choice of $\epsilon$ with $\nu=\alpha_0\alpha^{N{(\epsilon)}}$.
\end{proof}

\section{Proof of the Runge Approximation Theorems ~\ref{thm:Rungeboundary} and ~\ref{thm:Rungeinterior}}
\label{sec:Runge}

This section is devoted to the proofs of the (in $k$) quantitative Runge approximation results of Theorems ~\ref{thm:Rungeboundary} and ~\ref{thm:Rungeinterior}. This relies on duality arguments and the quantitative unique continuation results from the previous section.
In addition to the assumptions \textnormal{(\hyperref[assV]{i})} and \textnormal{(\hyperref[assq]{ii})}, we will now always also assume the condition \textnormal{(\hyperref[assSpec]{a1})} in $\Omega_2$ throughout the whole section in order to avoid solvability issues.

\begin{prop}\label{prop:QUCH1}
Let $\Omega_1\Subset \Omega_2$ and $\Gamma\subset\p\Omega_2$ be as in Theorem ~\ref{thm:Rungeboundary}.
Let $V$ and $q$ satisfy the assumptions \textnormal{(\hyperref[assV]{i})}-\textnormal{(\hyperref[assq]{ii})} in $\Omega_2$.
Let $u\in H^1(\Omega_2)$ be the unique solution to 
\begin{align*}
\begin{split}
\Delta u+k^2qu+Vu&=v\mathbb{1}_{\Omega_1} \;  \mbox{ in } \Omega_2,\\
u&=0 \!\quad\quad\mbox{ on } \p\Omega_2,
\end{split}
\end{align*}
with $v\in L^2(\Omega_1)$ and $k\geq 1$ satisfying the condition \textnormal{(\hyperref[assSpec]{a1})}.
Then there exist  a parameter $\mu_0\in(0,1)$ and a constant $C>1$ depending on $n, \Omega_2, \Omega_1, \Gamma, \|V\|_{L^\infty(\Omega_2)}, \kappa$ and $\|q\|_{C^1(\Omega_2)}$ such that 
\begin{align}\label{eq:QUCPgradboundary}
\|u\|_{H^1(\Omega_2\backslash\overline\Omega_1)}\leq C k^{n+4}\left|\log\left(C\frac{\|\p_\nu u\|_{H^{-\half}(\Gamma)}}{\|v\|_{L^2(\Omega_1)}}\right)\right|^{-\mu_0}\|v\|_{L^2(\Omega_1)}.
\end{align}
In addition, if $G$ is a bounded Lipschitz domain with $G\Subset \Omega_2\backslash\Omega_1$, then there exist a parameter $\nu_0\in(0,1)$ and a constant $C>1$ depending on $n, \Omega_2, \Omega_1, G, \Gamma, \|V\|_{L^\infty(\Omega_2)}, \kappa$ and $\|q\|_{C^1(\Omega_2)}$ such that
\begin{align}\label{eq:QUCPgradinterior}
\|u\|_{H^1(G)}\leq  Ce^{C k}\left(\frac{\|\p_\nu u\|_{H^{-\half}(\Gamma)}}{\|v\|_{L^2(\Omega_1)}}\right)^{\nu_0}
\|v\|_{L^2(\Omega_1)}.
\end{align}
\end{prop}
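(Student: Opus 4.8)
The plan is to derive Proposition~\ref{prop:QUCH1} from the quantitative unique continuation estimates of Proposition~\ref{prop:quantitative_UCP} applied on the domain $\Omega := \Omega_2 \backslash \overline{\Omega_1}$, after first passing to gradient estimates via interior elliptic regularity and the a priori bound of Lemma~\ref{lem:apriori}. First I would observe that in $\Omega_2 \backslash \overline{\Omega_1}$ the function $u$ solves the homogeneous equation $\Delta u + k^2 q u + V u = 0$, since $v \mathbb{1}_{\Omega_1}$ vanishes there; moreover $u|_{\p\Omega_2} = 0$, so on the subset $\Gamma$ the Cauchy data of $u$ reduce to $u|_\Gamma = 0$ and $\p_\nu u|_\Gamma$. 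Thus with the notation of Proposition~\ref{prop:quantitative_UCP} we may take $\eta := \|\p_\nu u\|_{H^{-\half}(\Gamma)}$ (the Dirichlet part being zero). For the bulk bound $M$, I would apply Lemma~\ref{lem:apriori}, which together with assumption \textnormal{(\hyperref[assSpec]{a1})} (i.e.\ $\dist(k^2,\Sigma_{V,q}) > ck^{2-n}$) gives $\|u\|_{H^1(\Omega_2)} \leq C(1 + k^3/\dist(k^2,\Sigma_{V,q}))\|v\|_{L^2(\Omega_1)} \leq C k^{n+1}\|v\|_{L^2(\Omega_1)} =: M$.

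The second step is to upgrade the $L^2$ unique continuation estimates to the $H^1$ estimates claimed. Here the point is that $u$ solves a homogeneous elliptic equation in a neighbourhood of $\overline{\Omega_2 \backslash \Omega_1}$ inside $\Omega_2$ except across $\p\Omega_2$ — but near $\p\Omega_2$ we also have the homogeneous Dirichlet condition, so interior-plus-boundary Caccioppoli/Schauder-type estimates apply uniformly up to $\p\Omega_2$. Concretely, for the interior estimate \eqref{eq:QUCPgradinterior} I would fix an intermediate domain $G \Subset G' \Subset \Omega_2 \backslash \Omega_1$ and use the Caccioppoli inequality $\|\nabla u\|_{L^2(G)} \leq C(1 + k\|q\|^{1/2}_{L^\infty} + \|V\|^{1/2}_{L^\infty})\|u\|_{L^2(G')} \leq Ck\|u\|_{L^2(G')}$, then bound $\|u\|_{L^2(G')}$ by \eqref{eq:QUCPinterior} of Proposition~\ref{prop:quantitative_UCP} applied on $\Omega := \Omega_2 \backslash \overline{\Omega_1}$ with inner domain $G'$. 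The extra factors of $k$ from the Caccioppoli inequality and from $M \sim k^{n+1}$ are polynomial, hence absorbed into the $C e^{Ck}$ and the polynomial prefactor; one also uses $(\eta/(M+\eta))^{\nu_0} M \leq C k^{n+1}(\eta/\|v\|)^{\nu_0}\|v\|$ after adjusting the exponent (using $\eta \leq CM$, which holds since $\|\p_\nu u\|_{H^{-\half}(\Gamma)} \lesssim \|u\|_{H^1(\Omega_2)}$ by the trace theorem for the conormal derivative).

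For the boundary estimate \eqref{eq:QUCPgradboundary} the mechanism is the same but one must handle the $H^1$ norm on all of $\Omega_2 \backslash \overline{\Omega_1}$, right up to $\p\Omega_2$ and up to $\p\Omega_1$. Up to $\p\Omega_2$ this is fine because of the homogeneous Dirichlet condition (boundary Caccioppoli estimate with the flat/Lipschitz boundary), and up to $\p\Omega_1$ one can simply work in the slightly larger homogeneous region $\Omega_2 \backslash \overline{\Omega_1'}$ for $\Omega_1 \Subset \Omega_1' \Subset \Omega_2$ so that $\p\Omega_1 \subset\subset \Omega_2 \backslash \overline{\Omega_1'}$, where interior regularity applies; this is harmless since any such $\Omega_1'$ still satisfies the hypotheses. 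One then inserts \eqref{eq:QUCPboundary}, obtaining a bound of the form $Ck \cdot |\log(\eta/(M+\eta))|^{-\mu}(M+\eta)$, multiplies by the $Ck$ from Caccioppoli and uses $M \leq Ck^{n+1}\|v\|_{L^2(\Omega_1)}$, $\eta \leq CM$ to rewrite $(M+\eta) \leq Ck^{n+1}\|v\|$ and $|\log(\eta/(M+\eta))| \geq c|\log(C\eta/\|v\|)|$ (by absorbing the polynomial factor $k^{n+1}$ into the argument of the logarithm, at the cost of shrinking $\mu_0 < \mu$ and enlarging $C$). Collecting the powers of $k$ — one from Caccioppoli, $n+1$ from $M$, and lower-order adjustments — produces the stated $k^{n+4}$.

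The main obstacle, and the step requiring the most care, is the bookkeeping of the $k$-powers in the boundary estimate: one must verify that replacing $\log(\eta/(M+\eta))$ by $\log(C\eta/\|v\|_{L^2(\Omega_1)})$ only costs a harmless change of the constant and a slight decrease of the exponent $\mu_0$, which uses the elementary inequality that for $0 < t \le \tfrac12$ and $A \ge 1$, $|\log(t/A)|^{-\mu} \le C_{A,\mu,\mu_0}|\log t|^{-\mu_0}$ for any $\mu_0 < \mu$ — valid uniformly in $A$ only in a bounded range of $A$, so one must instead track that $A = Ck^{n+1}$ enters and use $|\log(t)| - (n+1)\log(Ck) \ge \tfrac12|\log t|$ once $|\log t| \ge Ck^{n+1}$, splitting into the two regimes $|\log t| \gtrless Ck^{n+1}$ and checking the estimate is trivial (RHS $\ge \|v\|_{L^2(\Omega_1)}$) in the small-$|\log t|$ regime. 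Everything else — Caccioppoli, Lemma~\ref{lem:apriori}, the trace bound for $\p_\nu u$ — is standard and can be invoked directly.
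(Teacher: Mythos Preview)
Your setup (Lemma~\ref{lem:apriori} for $M = Ck^{n+1}\|v\|_{L^2(\Omega_1)}$, $\eta = \|\p_\nu u\|_{H^{-\half}(\Gamma)}$, homogeneous equation in $\Omega_2\backslash\overline\Omega_1$) and your argument for the interior bound \eqref{eq:QUCPgradinterior} via Caccioppoli on nested domains $G\Subset G'\Subset \Omega_2\backslash\Omega_1$ match the paper exactly. The bookkeeping paragraph is also essentially right (though the conormal trace gives $\eta\le CkM$, not $\eta\le CM$ --- a minor point).

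The gap is in your treatment of \eqref{eq:QUCPgradboundary} near $\p\Omega_1$. First, your geometry is inconsistent: if $\Omega_1\Subset\Omega_1'$ then $\p\Omega_1\subset\Omega_1'$, so $\p\Omega_1\not\subset\Omega_2\backslash\overline{\Omega_1'}$; and if you meant $\Omega_1'\Subset\Omega_1$, then $\Omega_2\backslash\overline{\Omega_1'}$ is \emph{not} a homogeneous region (the right-hand side $v\mathbb{1}_{\Omega_1}$ is nonzero in $\Omega_1\backslash\Omega_1'$). More substantively, any Caccioppoli estimate controlling $\nabla u$ in a collar of $\p\Omega_1$ from the exterior requires $\|u\|_{L^2}$ on a slightly larger set that reaches into $\Omega_1$, and there you have no logarithmic smallness --- only the a priori bound $\|u\|_{L^2}\le M/k$. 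Plugging this in yields $\|\nabla u\|_{L^2(\text{collar})}\lesssim M$, which is useless for the claimed logarithmic decay. So the step ``insert \eqref{eq:QUCPboundary} and upgrade via Caccioppoli'' cannot produce an $H^1$ bound up to $\p\Omega_1$.

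The paper circumvents this with a genuinely different mechanism: it re-runs the full $W_\epsilon/\Omega_\epsilon$ splitting of Proposition~\ref{prop:quantitative_UCP} directly at the gradient level. For the boundary layer $W_\epsilon$ (an $\epsilon$-neighbourhood of $\p(\Omega_2\backslash\overline\Omega_1)$, so including the collar of $\p\Omega_1$), it invokes Meyers' higher-integrability estimate $\|\nabla u\|_{L^q(\Omega_2)}\le C(\|v\|+k^2\|u\|_{L^2})\le CkM$ for some $q>2$, so that H\"older gives $\|\nabla u\|_{L^2(W_\epsilon)}\le C\epsilon^{1/p}kM$ --- the smallness comes from the thinness of the layer, not from unique continuation. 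On $\Omega_\epsilon$ (the part away from both boundaries) it uses interior Caccioppoli together with the $\epsilon$-dependent chain-of-balls estimate, and then optimizes in $\epsilon$. This Meyers step is the missing idea in your proposal; without it (or some substitute yielding $L^q$ control of $\nabla u$ across $\p\Omega_1$) the $H^1$ estimate up to $\p\Omega_1$ does not follow.
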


\begin{proof}

We start by estimating $\|u\|_{H^1(\Omega_2)}$ in terms of $v$. 
By Lemma ~\ref{lem:apriori}, there is a constant $C>1$ such that
\begin{align*}
\|u\|_{H^1(\Omega_2)}\leq C\left(1+\frac{k^{3}}{\dist(k^2, \Sigma_{V,q})}\right)\|v\|_{L^2(\Omega_1)}
\leq Ck^{n+1} \|v\|_{L^2(\Omega_1)},
\end{align*}
where for the last inequality we have used the assumption \textnormal{(\hyperref[assSpec]{a1})}.

Since $u$ satisfies \eqref{eq:generaleq} in $\Omega=\Omega_2\backslash\overline\Omega_1$, which is connected, the results of Proposition ~\ref{prop:quantitative_UCP} hold with 
\begin{align}\label{eq:selectionMeta}
\begin{split}
\eta&=\|\p_\nu u\|_{H^{-\half}(\Gamma)}, \quad M=Ck^{n+1}\|v\|_{L^2(\Omega_1)}.
\end{split}
\end{align}

In order to promote \eqref{eq:QUCPboundary} and \eqref{eq:QUCPinterior} to the gradient, we argue similarly as in Proposition ~\ref{prop:quantitative_UCP}. 
We consider the subsets $W_\epsilon$ and $\Omega_\epsilon$ defined in \eqref{eq:sets} with  
$\Omega=\Omega_2\backslash\overline\Omega_1$.

\textit{Step 1': Estimate on $W_\epsilon$.}
By the H\"older inequality
\begin{align*}
\|\nabla u\|_{L^2(W_\epsilon)}\leq C \epsilon^{\frac{1}{p}}\|\nabla u\|_{L^q(\Omega_2)},
\end{align*}
where $\frac 1 p+\frac 1 q=\frac 1 2$. By \cite[Theorem 1]{M63} (together with \cite[Theorem 0.5]{JK95} for the admissibility of the Lipschitz domain) there exists $q>2$ such that 
\begin{align*}
\|\nabla u\|_{L^q(\Omega_2)}\leq C\|v\mathbb{1}_{\Omega_1}-k^2 qu-Vu\|_{L^2(\Omega_2)}
\leq C\Big(\|v\|_{L^2(\Omega_1)}+(k^2 \kappa+\|V\|_{L^\infty(\Omega_2)})\|u\|_{L^2(\Omega_2)}\Big).
\end{align*}
In addition, testing the weak version of the equation with itself, we have  
\begin{align}\label{eq:kL2}
k\|u\|_{L^2(\Omega_2)}\leq C(\|u\|_{H^1(\Omega_2)}+\|v\|_{L^2(\Omega_1)})\leq CM,
\end{align}
with $C$ depending on $\kappa$ and $\|V\|_{L^\infty(\Omega_2)}$.
Therefore,
\begin{align*}
\|\nabla u\|_{L^2(W_\epsilon)}\leq C \epsilon^{\frac{1}{p}}k M.
\end{align*}

\textit{Step 2': Estimate on $\Omega_\epsilon $.}
Let $\chi$ be a smooth cut-off function supported in $\Omega_{\epsilon/2}$ with $\chi=1$ in $\Omega_\epsilon$ and $|\nabla \chi|\leq c\epsilon^{-1}$. We then obtain the following Caccioppoli inequality by testing the equation with $\chi^2 u$:
\begin{align*}
\|\nabla u\|_{L^2(\Omega_\epsilon)}
&\leq C(\epsilon^{-1}+\|V\|_{L^\infty(\Omega_2)}^{\frac 1 2}+k\kappa)\|u\|_{L^2(\Omega_{\epsilon/2})}
\\&\leq C k\epsilon^{-1}\|u\|_{L^2(\Omega_{\epsilon/2})}.
\end{align*}
Inserting the estimate \eqref{eq:QUCPinterior} with explicit $\epsilon$ dependence coming from the Step 2 in the proof of Proposition ~\ref{prop:quantitative_UCP}, we infer
\begin{align*}
\|\nabla u\|_{L^2(\Omega_\epsilon)}
\leq Ck\epsilon^{-1} e^{Ck}
\left(\frac{\eta}{M+\eta}\right)^{C_1'\epsilon^{C_2}}(M+\eta).
\end{align*}

\textit{Step 3': Optimization.}
Combining the previous two steps we obtain
\begin{align*}
\|\nabla u\|_{L^2(\Omega_2\backslash\overline\Omega_1)}\leq C  k \left(\epsilon^{\frac{1}{p}}+\epsilon^{-1} e^{Ck}\left(\frac{\eta}{M+\eta}\right)^{C_1'\epsilon^{C_2}}\right)(M+\eta).
\end{align*}
Optimizing in $\epsilon$ as in the Step 3 in the  proof of Proposition ~\ref{prop:quantitative_UCP} yields
\begin{align*}
\|\nabla u\|_{L^2(\Omega_2\backslash\overline\Omega_1)}
\leq C k^2\left|\log\left(\frac{\eta}{M+\eta}\right)\right|^{-\mu_0}(M+\eta)
\end{align*}
for a suitable $\mu_0\in(0,1)$.

Introducing \eqref{eq:selectionMeta} and taking into account that by \eqref{eq:kL2}
\begin{align}\label{eq:estetaM}
\begin{split}
\eta=\|\p_\nu u\|_{H^{-\half}(\Gamma)}
&\leq C(k^2 \|u\|_{L^2(\Omega_2)}+\|\nabla u\|_{L^2(\Omega_2)}+\|v\|_{L^2(\Omega_1)})
\\
&\leq CkM
\leq Ck^{n+2}\|v\|_{L^2(\Omega_1)},
\end{split}
\end{align}
we infer \eqref{eq:QUCPgradboundary}.

Estimate \eqref{eq:QUCPgradinterior} follows from  the Caccioppoli inequality in Step 2' for suitable choice of $\epsilon$ together with the previous estimate for $\eta$.
\end{proof}

Using the results from Proposition ~\ref{prop:QUCH1} we now address the proof of Theorem ~\ref{thm:Rungeboundary}:

\begin{proof}[Proof of Theorem ~\ref{thm:Rungeboundary}]
We seek to show that for any $\alpha >0$ there exists a solution $u_\alpha$ to
\begin{align*}
(\Delta+k^2q+V)u_\alpha = 0 \; \mbox{ in } \Omega_2
\end{align*} with
\begin{align*}
\|u_\alpha-v\|_{L^2(\Omega_1)} \leq C(\alpha, k)\|v\|_{H^1(\Omega_1)}, \qquad \|u_\alpha\|_{H^{\half}(\p \Omega_2)} \leq \frac{1}{\alpha}\| v\|_{L^2(\Omega_1)}.
\end{align*}

Let $X$ be the closure of $\{u\in H^1(\Omega_1)\mid(\Delta  + k^2q+V)u=0 \text{ in } \Omega_1\}$ in $L^2(\Omega_1)$. 
We then define 
\begin{align*}
A \colon \tilde  H^{\half}(\Gamma)&\to X,
\\
g \quad&\mapsto Ag = u|_{\Omega_1},
\end{align*} 
where $u\in  H^1(\Omega_2)$ is the solution to \eqref{eq:generaleq} in $\Omega_2$ satisfying the boundary condition $u|_{\p\Omega_2}= g\in \tilde  H^{\half}(\Gamma)$. 
We denote by $A^*$ the Hilbert space adjoint of $A$, which maps 
\begin{align*}
A^* \colon X &\to \tilde  H^{\half}(\Gamma),
\\
u &\mapsto A^*u = R(\p_\nu w|_\Gamma),
\end{align*} 
where $R$ is the Riesz isomorphism $R:  H^{-\half}(\Gamma)\to \tilde H^{\half}(\Gamma)$ and $w\in H^1(\Omega_2)$ satisfies 
\begin{align*}
(\Delta+k^2q+V)w&=\mathbb{1}_{\Omega_1}u \;\mbox{ in } \Omega_2,\\
w&=0 \qquad\mbox{ on }  \p \Omega_2.
\end{align*}
By \cite[Lemma 4.1]{RS17}, $A$ is a compact, injective operator with dense range in $X$ and applying the spectral theorem to $A^*A$ yields an orthonormal basis of eigenvectors $\{\phi_j\}_{j=1}^\infty$ for $ \tilde H^{\half}(\Gamma)$ and a sequence of positive, decreasing eigenvalues $\{\mu_j\}_{j=1}^\infty$ with 
\[A^*A\phi_j=\mu_j\phi_j.\] Then, setting $\psi_j:=\mu_j^{-\half} A\phi_j$ yields an orthonormal basis $\{\psi_j\}_{j=1}^\infty$ of $X$. In particular, we have \begin{align}\label{eq:adjAbasis}
    \|A^*\psi_j\|_{ H^{\half}(\partial \Omega_2)}\leq \mu_j^{\half}.
\end{align}

Returning to our setting, we notice that $v\in X$, hence it admits a unique decomposition in the orthonormal basis $v=\sum_{j=1}^\infty \beta_j\psi_j$. For $\alpha>0$, we define 
$$v_\alpha:=\sum_{\alpha \geq \mu_j^{\half}}\beta_j\psi_j$$ and let
$w_\alpha$ be the solution of 
\begin{align*}
(\Delta +k^2q+V)w_\alpha&=\mathbb{1}_{\Omega_1}v_\al \;\mbox{ in } \Omega_2,\\
w_\alpha&=0 \;\qquad \mbox{ on } \partial \Omega_2.
\end{align*}
Here the notation $\alpha \geq \mu^{\half}_{j}$ is an abbreviation for the set $\{j\in \N: \  \alpha \geq \mu^{\half}_{j}\}$.
By \eqref{eq:adjAbasis}, it holds
\begin{align}
\label{eq:partial boundary bd2}
\|\partial_\nu w_\alpha\|_{H^{-\half}(\Gamma)}=\|A^*v_\alpha\|_{H^{\half}(\partial\Omega_2)}\leq \alpha \|v_\alpha\|_{L^2(\Omega_1)}.
\end{align}
Now, we define $u_\al$ as the solution to \eqref{eq:generaleq} on $\Omega_2$ satisfying the boundary condition $ u_\al = g_\al$ on $\p\Omega_2$, with $g_\al=\sum_{\alpha\le\mu_j^{\half}}\beta_j\mu_j^{-\half}\phi_j$. 
Note that at the boundary we have by the previous considerations 
\begin{align*}
\|u_\alpha\|_{H^{\half}(\p\Omega_2)}^2=\|g_\alpha\|_{ H^{\half}(\p\Omega_2)}^2=\bigg\|\sum_{\alpha\le\mu_j^{\half}}\beta_j\mu_j^{-\half}\phi_j\bigg\|^2_{ H^{\half}(\partial \Omega_2)}=\sum_{\alpha\le\mu_j^{\half}}\frac{\beta_j^2}{\mu_j}
    \leq \frac{1}{\alpha^2} \| v \|_{L^2(\Omega_1)}^2.
\end{align*}
In addition, notice that  
\begin{align*}
u_\alpha|_{\Omega_1}=Ag_\al=A\left(\sum_{\alpha\le\mu_j^{\half}}\beta_j\mu_j^{-\half}\phi_j\right)=\sum_{\alpha\le\mu_j^{1/2}}\beta_j\psi_j=v-v_\alpha.
\end{align*}
Thus, it remains to obtain an explicit dependence on $\alpha$ and $k$ in 
\begin{align*}
\|u_\alpha-v\|_{L^2(\Omega_1)}=\|v_\alpha\|_{L^2(\Omega_1)}\leq C(\alpha, k) \|v\|_{H^1(\Omega_1)}.
\end{align*}
Orthogonality considerations show 
\begin{align}\label{eq:valpha1}
\|v_\alpha\|^2_{L^2(\Omega_1)}=(v,\partial_\nu w_\alpha)_{\partial\Omega_1}-(\partial_\nu v,w_\alpha)_{\partial \Omega_1}.
\end{align}
Using trace estimates for the solutions we find
\begin{align}
\label{eq:auxw}
\|v_\alpha\|^2_{L^2(\Omega_1)}
&\leq C (1+\|V\|_{L^\infty(\Omega_2)}+k^2\kappa) \|v\|_{H^1(\Omega_1)}\|w_\alpha\|_{H^1(\Omega_2\backslash\overline\Omega_1)}
\end{align}
with some constant $C>0$ depending on $\Omega_1, \Omega_2\backslash \Omega_1$.  Using \eqref{eq:QUCPgradboundary} to estimate the norm of $w_{\alpha}$ in \eqref{eq:auxw} yields
\begin{align*}
\|v_\alpha\|_{L^2(\Omega_1)}^2
&\leq C k^{n+6}\left|\log\left(C\frac{\|\p_\nu w_{\alpha}\|_{H^{-\half}(\Gamma)}}{\|v_{\alpha}\|_{L^2(\Omega_1)}}\right)\right|^{-\mu_0}\|v_{\alpha}\|_{L^2(\Omega_1)}\|v\|_{H^1(\Omega_1)}.
\end{align*}
Finally, dividing by $\|v_{\alpha}\|_{L^2(\Omega_1)}$, recalling \eqref{eq:partial boundary bd2} and using monotonicity, we arrive at
\begin{align*}
\|v_\alpha\|_{L^2(\Omega_1)}
& \leq C k^{n+6}\left|\log (C\alpha) \right|^{-\mu_0}\|v\|_{H^1(\Omega_1)}.
\end{align*}
We choose $\alpha<1$ so that $C k^{n+6}{\left|\log (C\alpha) \right|^{-\mu_0}}=\epsilon$, i.e.
\begin{align*}
\frac{1}{\alpha}= C e^{ Ck^{s} \epsilon^{-\mu}}
\end{align*}
with $s=\frac{n+6}{\mu_0}$ and $\mu=\mu_0^{-1}$.
This concludes the proof.
\end{proof}

Relying on similar ideas, we also obtain the bounds from Theorem ~\ref{thm:Rungeinterior}:

\begin{proof}[Proof of Theorem ~\ref{thm:Rungeinterior}]
We define $v_\alpha$ and $w_\alpha$ as in  the proof of Theorem ~\ref{thm:Rungeboundary} with $v=\tilde v|_{\Omega_1}$.
Equations ~\eqref{eq:valpha1} and \eqref{eq:auxw} can be slightly modified to read
\begin{align*}
\|v_\alpha\|^2_{L^2(\Omega_1)}=(\tilde v,\partial_\nu w_\alpha)_{\partial\tilde \Omega_1}-(\partial_\nu \tilde v,w_\alpha)_{\partial\tilde \Omega_1}\leq Ck^2\|\tilde v\|_{H^1(\tilde\Omega_1)}\|w_\alpha\|_{H^1(G)},
\end{align*}
where $G=\Omega_2'\backslash \tilde\Omega_1\Subset\Omega_2\backslash\Omega_1$ with $\Omega_2'\Subset\Omega_2$. 
Arguing as above and using the  quantitative unique continuation result \eqref{eq:QUCPgradinterior}, we obtain
\begin{align*}
\|u_\alpha-u\|_{L^2(\Omega_1)}=\|v_\alpha\|_{L^2(\Omega_1)}
&\leq C e^{C k}\left(\frac{\|\partial_\nu w_\alpha\|_{H^{-\half}(\partial \Omega_2)}}{\|v_\alpha\|_{L^2(\Omega_1)}}\right)^{\nu_0} \|\tilde v\|_{H^1(\tilde \Omega_1)}
\\ &\leq C e^{C k}\alpha^{\nu_0}
\|\tilde v\|_{H^1(\tilde \Omega_1)}.
\end{align*}
Choosing $\alpha$ so that $ e^{C k}\alpha^{\nu_0}=\epsilon$, i.e. $\frac{1}{\alpha}= C\Big(\frac{e^{C k}}{\epsilon}\Big)^{\frac1{\nu_0}}$,
the result follows with $\nu={\nu_0}^{-1}$. 
\end{proof}

\subsection{Optimality of the estimates in Theorem ~\ref{thm:Rungeinterior}}
\label{sec:optimality}
In order to infer the optimality of the quantitative Runge approximation results in the parameter $\epsilon$, we consider the case $q=1$ (i.e. the case of the Helmholtz equation). 
We remark that optimality results in $k$ for three balls inequalities were recently obtained in \cite{BM20}.

\begin{lem}
Let $\Omega_2=B_1, \Omega_1=B_{\half}$ and $\Gamma=\p B_1$.
For fixed $k\geq 1$, there exists $N=N(k)\in\N$ and a sequence $(v_\ell)_{\ell\geq N}$ of solutions to $(\Delta+k^2)v_\ell=0$ in $\Omega_1$ with $\|v_\ell\|_{H^1(\Omega_1)}=1$ such that for any solution $u$ of $(\Delta+k^2)u=0$ in $\Omega_2$ with $\|v_\ell-u\|_{L^2(\Omega_1)}\leq(2^{\frac n2+4}\ell)^{-1}$ we have $\|u\|_{H^{\half}(\Gamma)}\geq ce^{C\ell}$.
\end{lem}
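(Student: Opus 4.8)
The plan is to exhibit the sequence $(v_\ell)$ explicitly using separation of variables for the Helmholtz equation on balls in $\R^n$, exploiting the fact that regular solutions involve Bessel functions $J_{\nu}(kr)$ while the "boundary data" decomposition forces large coefficients for high angular modes. More precisely, I would first recall that, writing $x = r\omega$ with $\omega \in \Sn[n-1]$ and letting $Y_\ell$ denote a (real, $L^2(\Sn[n-1])$-normalized) spherical harmonic of degree $\ell$, the functions $v_\ell(x) := c_\ell\, j_\ell(kr) Y_\ell(\omega)$ solve $(\Delta + k^2)v_\ell = 0$ in all of $\R^n$, where $j_\ell$ is the hyperspherical Bessel function and $c_\ell$ is chosen so that $\|v_\ell\|_{H^1(B_{1/2})} = 1$. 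Any solution $u$ of $(\Delta+k^2)u = 0$ in $B_1$ expands as $u = \sum_{m,p} a_{m,p}\, j_m(kr) Y_{m,p}(\omega)$, and the orthogonality of spherical harmonics on each sphere $\{|x| = r\}$ shows that the approximation condition $\|v_\ell - u\|_{L^2(B_{1/2})} \le (2^{n/2+4}\ell)^{-1}$ controls only the single coefficient $a_{\ell,p}$ corresponding to $v_\ell$'s mode, namely $|c_\ell\,(\!\int_0^{1/2} j_\ell(kr)^2 r^{n-1}dr)^{1/2} - a_{\ell}(\!\int_0^{1/2} j_\ell(kr)^2 r^{n-1}dr)^{1/2}|$ is small, so $a_\ell$ is close to $c_\ell$; in particular $\|u\|_{H^{1/2}(\partial B_1)} \ge |a_\ell|\, j_\ell(k) \gtrsim c_\ell\, j_\ell(k)$ once $\ell$ is large enough (this fixes $N(k)$).

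The heart of the matter is then the two-sided asymptotics of Bessel functions in the regime of large order $\ell$ and fixed argument $kr$. The key quantitative input is that for $\nu$ large and $z$ fixed, $J_\nu(z) \sim \frac{1}{\sqrt{2\pi\nu}}\left(\frac{ez}{2\nu}\right)^\nu$, so that $j_\ell(kr)$ decays like $(\text{const}\cdot r)^\ell$ times lower-order factors. Hence the normalization constant satisfies $c_\ell \sim C^\ell \ell^{\beta}$ for some $C>2$ (coming from the ratio of $r=1/2$ in the $H^1$-normalization) and, crucially, the ratio
\begin{align*}
\frac{j_\ell(k)}{\left(\int_0^{1/2} j_\ell(kr)^2 r^{n-1}\,dr\right)^{1/2}} \gtrsim \left(\frac{1}{1/2}\right)^{\ell} = 2^{\ell}
\end{align*}
up to polynomial factors in $\ell$. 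Combining these gives $\|u\|_{H^{1/2}(\partial B_1)} \gtrsim c_\ell\, j_\ell(k) \gtrsim c e^{C\ell}$ for appropriate $c, C > 0$ (where the precise $C$ is governed by $\log 2$ plus the geometry), which is the claimed lower bound. I would make the bookkeeping clean by normalizing $v_\ell$ in $H^1$ rather than $L^2$ — this only costs an extra factor $k$-dependent constant absorbed into $N(k)$ — and by using the crude but sufficient estimate $\|u\|_{H^{1/2}(\partial B_1)}^2 \ge \|u\|_{L^2(\partial B_1)}^2 \ge |a_\ell|^2 j_\ell(k)^2$.

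The main obstacle I expect is making the perturbation argument fully rigorous: from $\|v_\ell - u\|_{L^2(B_{1/2})}$ small I must deduce a lower bound on $|a_\ell|$, which requires showing that the mode-$\ell$ component of $v_\ell$ in the $L^2(B_{1/2})$ norm is not itself tiny — but this is exactly $\|v_\ell\|_{L^2(B_{1/2})}$, which is comparable to $1/(1+k)$ times $\|v_\ell\|_{H^1(B_{1/2})} = 1$ via the standard estimate $\|\nabla v_\ell\|_{L^2} \le C(1+k)\|v_\ell\|_{L^2}$ valid for solutions (test the equation against $v_\ell$). Thus $\|v_\ell\|_{L^2(B_{1/2})} \ge c/(1+k)$, and choosing the threshold $(2^{n/2+4}\ell)^{-1}$ small relative to this — which holds for $\ell \ge N(k)$ — yields $|a_\ell| \ge \tfrac12 c_\ell$ after accounting for the $r$-integral factor; here the constant $2^{n/2+4}$ in the hypothesis is tailored precisely so this works out with room to spare. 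A secondary technical point is justifying termwise manipulation of the spherical-harmonic expansions, but since $u$ is smooth inside $B_1$ and all series converge in $L^2$ on each sphere, Fubini and Parseval handle this routinely. Once $|a_\ell| \gtrsim c_\ell$ is in hand, the Bessel asymptotics give the exponential growth $c_\ell j_\ell(k) \gtrsim c e^{C\ell}$ immediately, since $c_\ell$ already grows like $2^\ell$ up to polynomial and $k$-dependent factors while $j_\ell(k)$ contributes a further favorable ratio, completing the proof.
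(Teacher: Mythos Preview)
Your approach mirrors the paper's: single-mode solutions $v_\ell$ via separation of variables, normalized in $H^1(B_{1/2})$, orthogonality of spherical harmonics to isolate the mode-$\ell$ coefficient of $u$, and Bessel asymptotics to extract the ratio $j_\ell(k)/j_\ell(k/2)\sim 2^\ell$. The paper makes the Bessel bounds quantitative via the Paris inequalities and the identities \cite[10.22.27, 10.19.1]{NIST}, but the architecture is identical.

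One step in your perturbation argument is wrong, however. You assert $\|\nabla v_\ell\|_{L^2(B_{1/2})}\le C(1+k)\|v_\ell\|_{L^2(B_{1/2})}$ by ``testing the equation against $v_\ell$''. Testing produces boundary terms you cannot discard, and in fact the bound is false: the angular part of the gradient alone gives
\[
\|\nabla v_\ell\|_{L^2(B_{1/2})}^2 \ge \int_0^{1/2}\frac{\ell(\ell+n-2)}{r^2}\,j_\ell(kr)^2 r^{n-1}\,dr \ge 4\ell(\ell+n-2)\,\|v_\ell\|_{L^2(B_{1/2})}^2,
\]
so $\|\nabla v_\ell\|_{L^2}/\|v_\ell\|_{L^2}\gtrsim \ell$, not $k$. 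Consequently $\|v_\ell\|_{L^2(B_{1/2})}\sim 1/\ell$, and this is precisely why the hypothesis uses the threshold $(2^{n/2+4}\ell)^{-1}$: it is calibrated to $1/\ell$, not to $1/k$. Your conclusion that the approximation error is small relative to $\|v_\ell\|_{L^2}$ therefore survives, but for the correct reason. Relatedly, your claim that $c_\ell\sim C^\ell\ell^\beta$ for a fixed constant $C>2$ is also off: $c_\ell=\|g_\ell\|_{H^1(B_{1/2})}^{-1}$ grows super-exponentially (roughly like $(c\ell/k)^\ell$), and it is only the product $c_\ell\, j_\ell(k)$ --- essentially $j_\ell(k)/\big(\ell\, j_\ell(k/2)\big)$ --- that behaves like $2^\ell$ up to polynomial factors. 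Once you track the $H^1$--to--$L^2$ ratio as $\sim\ell$ rather than $\sim k$, the bookkeeping goes through exactly as in the paper.
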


\begin{proof}
Arguing by separation of variables, we obtain that any solution $u\in H^1(B_1)$ of ${(\Delta+k^2)u=0}$ can be written with respect to the variables $r=|x|$, $\theta= \frac{x}{|x|}\in \mathbb{S}^{n-1}$ as
\begin{align*}
u(x)=u(r,\theta)=\sum_{\ell=0}^\infty \sum_{m=0}^{N_\ell}  c_{\ell m} R_\ell(k r) \psi_{\ell m}(\theta),
\end{align*}
where $\{\psi_{\ell m}\}_{m=0}^{N_\ell}$ is an orthonormal basis of $L^2(\mathbb{S}^{n-1})$ consisting of the spherical harmonics of degree $\ell$ and 
$$R_\ell(r)= r^{1-\frac n 2} J_{\ell+\frac n 2 -1}(r),$$ with $J_\alpha$ denoting the Bessel functions.

We consider 
$g_\ell(x)=R_\ell(k r) \psi_{\ell,1}(\theta)$
and define 
$v_\ell=\alpha_\ell g_\ell$ with $\alpha_\ell=\|g_\ell\|_{H^1(\Omega_1)}^{-1}$.
Then, we may write $u=cv_\ell+w$, where $(w, g_\ell)_{L^2(B_1)}=0$ and $c\alpha_\ell=\|g_\ell\|_{L^2(B_1)}^{-2}(u, g_\ell)_{L^2(B_1)}$.
Therefore, 
$$u(x)|_{\p B_1}=c\alpha_\ell R_\ell(k)\psi_{\ell,1}(\theta)+\omega(\theta)$$ with $(\omega, \psi_{\ell,1})_{L^2(\p B_1)}=0$ and $\omega(\theta) = w(x)|_{\partial B_1}$.

We are interested in estimating $\|u\|_{H^{\half}(\Gamma)}$ from below.
If we assume $\|u-v_\ell\|_{L^2(\Omega_1)}<\epsilon$, then
$|c-1|\alpha_\ell\|g_\ell\|_{L^2(\Omega_1)}<\epsilon$.
Therefore,
\begin{align}\label{eq:optimalitybdnorm}
\begin{split}
\|u\|_{H^\half(\Gamma)}
&\geq (1+\lambda_\ell^{\frac 1 2})^{\frac 1 2} |c\alpha_\ell||R_\ell(k)|
\geq \ell^{\frac{1}{2}}\big(|\alpha_\ell| -\epsilon\|g_\ell\|_{L^2(\Omega_1)}^{-1}\big) |R_\ell(k)|
\\ &\geq \ell^{\frac{1}{2}}\big(\|g_\ell\|_{H^1(\Omega_1)}^{-1}-\epsilon\|g_\ell\|_{L^2(\Omega_1)}^{-1}\big) |R_\ell(k)|,
\end{split}
\end{align}
where $\lambda_\ell=\ell(\ell+n-2)$.

Using \cite[10.22.27]{NIST}, we can estimate the  $L^2$ norm of $g_{\ell}$ as follows:
\begin{align*}
\|g_\ell\|_{L^2(\Omega_1)}^2
&=\int_0^{\frac 1 2} R_\ell^2(kr)r^{n-1} dr
=k^{-n}\int_0^{\frac k 2} t J_{\ell+\frac n 2-1}^2(t)dt
\\ &=2k^{-n}\sum_{m=0}^\infty \Big(\ell+\frac n 2+2m\Big) J_{\ell+\frac n 2+2m}^2\Big(\frac k 2 \Big)
\geq k^{-n}(2\ell+n) J_{\ell+\frac n 2}^2\Big(\frac k 2 \Big).
\end{align*}
For the norm of the gradient, using  integration by parts and the equation, we obtain 
\begin{align*}
\|\nabla g_\ell\|_{L^2(\Omega_1)}^2
&=\int_{\p B_{\half}} g_\ell \partial_r g_\ell d\mathcal H^{n-1}(\theta)-\int_{B_{\half}}g_\ell \Delta g_\ell  dx
\\&= R_\ell\Big(\frac{k}{2}\Big)\partial_r R_\ell\Big(\frac{k}{2}\Big)+k^2\|g_\ell\|^2_{L^2(\Omega_1)}.
\end{align*}

We next collect some properties of Bessel functions from \cite{Paris84}  for $x\in (0,1)$ and $\alpha>0$:
\begin{align}\label{eq:Besselprop}
1\leq \frac{J_{\alpha}(\alpha x)}{x^\alpha J_{\alpha}(\alpha)}\leq  e^{\alpha(1-x)}, 
\qquad
0<\frac 1 x-\frac{J'_{\alpha}(\alpha x)}{J_{\alpha}(\alpha x)} <1,
\qquad \frac{J_\alpha(\alpha x)}{J_{\alpha+1}(\alpha x)}<\frac{2\alpha+2}{\alpha x}.
\end{align}
By the second estimate  in \eqref{eq:Besselprop} and the fact that $J_\alpha(\alpha x)>0$ (e.g. \cite[10.14.2]{NIST} together with  the first estimate in \eqref{eq:Besselprop} or \cite[10.14.7]{NIST}), we have that $J_\alpha(\alpha x)$ is monotonously increasing for $x\in(0,1)$.
Moreover, we know  \cite[10.19.1]{NIST} that for $z\neq 0$ fixed
\begin{align}
\label{eq:Besselasym}
J_\alpha(z)\sim \frac{1}{\sqrt{2\pi\alpha}}\left(\frac{ez}{2\alpha}\right)^\alpha, \qquad \alpha\to\infty.
\end{align}

We assume from now on that $\ell+\frac n 2-1>k$, so the previous estimates  \eqref{eq:Besselprop} can be applied.
In particular, due to the monotonicity
\begin{align*}
\|g_\ell\|_{L^2(\Omega_1)}
=\left(\int_0^{\frac 1 2} R_\ell^2(kr)r^{n-1} dr\right)^{\frac 1 2}
\leq \frac{1}{\sqrt{2}} R_\ell\Big(\frac{k}{2}\Big),
\end{align*}
Inserting the previous estimates  on $g_\ell$ into \eqref{eq:optimalitybdnorm}
yields
\begin{align*}
\|u\|_{H^{\half}(\Gamma)}
&\geq \ell^{\frac 1 2}\left(\frac{\frac{R_\ell(k)}{R_\ell(\frac{k}{2})}}{1+k+ \Big(\frac{\p_rR_\ell(\frac{k}{2})}{R_\ell(\frac{k}{2})}\Big)^{\frac 1 2}}-\epsilon\frac{R_\ell(k)k^{\frac n 2}}{(2\ell+n)^{\frac 12}J_{\ell+\frac n2}\big(\frac k2\big)}\right)
\\
&= \ell^{\frac 1 2}\frac{J_\alpha(k)}{J_\alpha\big(\frac{k}2\big)}
\left(  \frac{2^{1-\frac n2}}{1+k+ \big(\frac{\p_rR_\ell(\frac{k}{2})}{R_\ell(\frac{k}{2})}\big)^{\frac 1 2}}-\epsilon\frac{k}{(2\ell+n)^{\frac 1 2}}\frac{J_\alpha\big(\frac k2\big)}{J_{\alpha+1}\big(\frac k2\big)}\right),
\end{align*}
where $\alpha=\ell+\frac n 2-1>k$.
Using the different estimates in \eqref{eq:Besselprop},  we deduce
\begin{align*}
\frac{\p_rR_\ell\big(\frac{k}{2}\big)}{R_\ell\big(\frac{k}{2}\big)}
&=\frac{J'_{\alpha}\big(\frac k 2\big)}{J_\alpha\big(\frac k 2\big)}-\frac{n-2}{k}<\frac{2\alpha}{k}-\frac{n-2}{k}=\frac{2\ell}{k},
\\
\frac{k}{(2\ell+n)^{\frac 1 2}}\frac{J_\alpha\big(\frac k2\big)}{J_{\alpha+1}\big(\frac k2\big)}
&\leq\frac{k}{(2\ell+n)^{\frac 1 2}}\frac{2\alpha+2}{\frac k 2}
= 2(2\ell+n)^{\frac 1 2}.\end{align*}
Therefore, 
\begin{align}\label{eq:optimalitybdnorm2}
\|u\|_{H^{\half}(\Gamma)}
&\geq 2\ell^{\frac 1 2}\frac{J_\alpha(k)}{J_\alpha\big(\frac{k}2\big)}
\left(\frac{2^{-\frac n2}}{1+k+\big(\frac{2\ell}{k}\big)^{\frac 1 2}}-\epsilon (2\ell+n)^{\frac 12}\right).
\end{align}

In order to finally obtain the  optimality in $\epsilon$, we consider $\ell\gg\max\{k^2, n\}$ and $\epsilon=(2^{\frac n 2+4}\ell)^{-1}$.
Then, by \eqref{eq:Besselasym} 
\begin{align*}
\|u\|_{H^{\half}(\Gamma)}
&\geq C2^{-\frac n2} 2^\alpha \ell^{\frac 1 2}
\left(\frac{1}{3\ell^{\frac 1 2}}-\frac{3}{16\ell^{\frac 1 2}}\right)
>c2^\ell. \qedhere
\end{align*}
\end{proof}

\section{Stability for the Calder\'on Problem for the Helmholtz Equation with Potential}
\label{sec:stability}

As an application of the Runge approximation results from above, we present the proof of the stability estimate from Proposition ~\ref{prop:stability} for a partial data Calder\'on problem with stability improvement for an increasing parameter $k$. For the Helmholtz setting with impedance boundary conditions this had earlier been deduced in \cite{KU19}.

More precisely, we assume the following set-up: We consider $n\ge 3$, $\Omega\subseteq \R^n$ a bounded connected open set with $C^\infty$ boundary and $V$ and $q$ as in \textnormal{(\hyperref[assV]{i})}-\textnormal{(\hyperref[assq]{ii})} and $k\geq 1$ satisfying \textnormal{(\hyperref[assSpec]{a1})}.
Let $\Gamma$ be a non-empty open subset of $\p\Omega$.
We study the  local  Dirichlet-to-Neumann map
\begin{align*}
\Lambda_{V,q}^\Gamma(k): \tilde H^{\half}(\Gamma) &\to H^{-\half}(\Gamma),\\
g & \mapsto \p_\nu u|_{\Gamma}, 
\end{align*}
where $u\in H^1(\Omega)$ is the solution to 
\begin{align*}
(\Delta+k^2 q+V)u&=0 \;\mbox{ in }\Omega,\\
u&=g \;\mbox{ on }\p\Omega.
\end{align*}

Theorem ~\ref{thm:Rungeinterior} allows us to obtain stability results for the inverse problem by using the strategy from \cite[Proposition 6.1]{RS17}. In particular, this reproves the result of \cite[Theorem 1.2]{KU19} in the case of Dirichlet boundary conditions and our spectral assumption \hyperref[assq]{(a1)}.

\begin{proof}[Proof of Proposition ~\ref{prop:stability}]
We will use the short hand notation $b_j=k^2q_j+V_j$ for $j=1,2$, for which $\|b_j\|_{L^2(\Omega)}\leq k^2B$  and $b_1=b_2$ in $\Omega\backslash\Omega'$.

We start with the construction of complex geometrical optics solutions $u_j$ solving $(\Delta+b_j)u_j = 0$ in $\Omega$ following  \cite{SU87}.
We fix $\omega\in \mathbb S^{n-1}$ and choose 
$\omega^\perp, \tilde\omega^\perp\in \mathbb S^{n-1}$ such that
\[\omega \cdot \omega^{\bot}
= \omega \cdot \widetilde{\omega}^{\bot}
= \omega^{\bot} \cdot \widetilde{\omega}^{\bot} = 0.\]
We set for $\tau, r\in \R$ with $\tau\geq \frac{|r|}{2}$
\begin{align*}
\xi_1=\tau\omega^\perp+i\left(-\frac{r}{2}\omega+\sqrt{\tau^2-\frac{r^2}{4}}\tilde\omega^\perp\right),
\quad
\xi_2&=-\tau\omega^\perp+i\left(-\frac{r}{2}\omega-\sqrt{\tau^2-\frac{r^2}{4}}\tilde\omega^\perp\right).
\end{align*}
By \cite{SU87}, if $\tau\geq \max\{C_0k^2B,1\}$, there are solutions $u_j$ for $j\in\{1,2\}$ of the form
\begin{align*}
u_j(x)=e^{\xi_j\cdot x}\big(1+\psi_j(x)\big),
\end{align*}
where
\begin{align*}
\|\psi_j\|_{L^2(\Omega)}\leq\frac{Ck^2B}{\tau}, 
\qquad
\|\psi_j\|_{H^1(\Omega)}\leq Ck^2B.
\end{align*}
This implies the following estimates for the solutions:
\begin{align*}
\|u_j\|_{L^2(\Omega)}\leq  Ce^\tau, \qquad 
\|u_j\|_{H^1(\Omega)}\leq  C\tau e^{\tau}.
\end{align*}

Now we seek to approximate $u_j$ up to some order $\epsilon>0$ which will be chosen later.
We apply  Theorem ~\ref{thm:Rungeinterior}
 with $\Omega_2=\Omega$, $\Omega_1=\Omega'$ and $\tilde\Omega_1=\tilde \Omega'$, where the latter is a slightly bigger domain containing $\Omega'$. This yields solutions $\tilde u_j$  to $ (\Delta+b_j)\tilde{u}_j = 0 $ in $\Omega$ with  $\tilde{u}_j|_{\p\Omega}$ supported in $\Gamma$ and
\begin{align*}
\|u_j-\tilde u_j\|_{L^2(\Omega')}\leq \epsilon\|u_j\|_{H^1(\tilde\Omega')}, \qquad \|\tilde u_j\|_{H^{\half}(\p\Omega)}\leq Ce^{Ck}\epsilon^{-\nu}\|u_j\|_{L^2(\Omega')}.
\end{align*}
In addition, since $b_1=b_2$ in $\Omega\backslash\Omega'$ and using integration by parts, we obtain the following analog to Alessandrini's identity \cite{A88}
\begin{equation*}
\int_{\Omega'} (b_2-b_1) \tilde u_{1} \tilde u_{2} \, d x =\int_{\Omega} (b_2-b_1) \tilde u_{1} \tilde u_{2} \, d x =\Big(\big(\Lambda_{V_1,q_1}^\Gamma(k)-\Lambda_{V_2,q_2}^\Gamma(k)\big)\tilde u_1,\tilde u_2\Big)_{L^2(\partial \Omega)}.
 \end{equation*}
Abbreviating $\delta:= \|\Lambda_{V_1,q_1}^\Gamma(k)-\Lambda_{V_2,q_2}^\Gamma(k)\|_{\tilde{H}^{\half}(\Gamma) \rightarrow H^{-\half}(\Gamma)}$ and applying the previous estimates leads to 
\begin{equation*}
\Big|\int_{\Omega'} (b_2-b_1) \tilde u_{1} \tilde u_{2} \, d x\Big|
\leq \delta \|\tilde u_1\|_{H^{\half}(\p\Omega)}\|\tilde u_2\|_{H^{\half}(\p\Omega)}\leq  C \delta e^{Ck}\epsilon^{-2\nu}\|u_1\|_{L^2(\Omega)}\|u_2\|_{L^2(\Omega)}.
 \end{equation*}

We extend $b_j$ by zero to $\R^n$.
Now we seek to apply the previous steps to estimate 
\begin{align*}
|\mathcal F(b_2-b_1)(r\omega)|
&=\left|\int_{\Omega'} (b_2-b_1) e^{-ir\omega\cdot x}dx \right|
\end{align*}
for any $|r|\leq 2\tau$ and $\omega\in \mathbb S^{n-1}$.
Notice that 
\begin{align*}
e^{-ir\omega\cdot x}
&=u_1u_2-e^{-ir\omega\cdot x}(\psi_1+\psi_2+\psi_1\psi_2)
\\
&=-e^{-ir\omega\cdot x}(\psi_1+\psi_2+\psi_1\psi_2)
+(u_1-\tilde u_1)u_2+(u_2-\tilde u_2)\tilde u_1+\tilde u_1\tilde u_2.
\end{align*}
Thus, using the Runge approximation bounds again and invoking the estimates for the functions $u_j$ and $\psi_j$, we obtain
\begin{align*}
|\mathcal F(b_2-b_1)(r\omega)|
&\leq Ck^2B\left(\frac{k^2B}
{\tau}+\epsilon \tau^2e^{2\tau}
\right)+C\delta e^{Ck}\epsilon^{-2\nu} e^{2\tau}.
\end{align*}

In order to estimate $\|b_2-b_1\|_{H^{-1}(\Omega)}$, we notice that for any $\rho<2\tau$
\begin{align*}
\|b_2-b_1\|_{H^{-1}(\Omega)}^2
&=\int_{\R^n}|\F(b_2-b_1)(\zeta)|^2(1+|\zeta|^2)^{-1}d\zeta
\\
&\leq  \int_{|\zeta|<\rho}|\F(b_2-b_1)(\zeta)|^2(1+|\zeta|^2)^{-1}d\zeta
+\frac{1}{1+\rho^2}\|b_1-b_2\|_{L^2(\Omega)}^2
\\
&\leq C\rho^{n-2}\left(\frac{(k^2B)^4}
{\tau^2}+(k^2B)^2\epsilon^2 e^{5\tau}
+\delta^2 e^{Ck}\epsilon^{-4\nu} e^{5\tau}\right)+C\rho^{-2}(k^2 B)^2.
\end{align*}

Choosing $\rho=\left(\frac{\tau}{k^2B}\right)^{\frac 2n}$
and $\epsilon=\delta^{\frac{1}{1+2\nu}}$ yields
\begin{align*}
\|b_2-b_1\|_{H^{-1}(\Omega)}^2
&\leq C\left((k^2B)^{2+\frac 4 n} \tau^{-\frac 4n}+ e^{C\tau}e^{Ck}\delta^{\frac{2}{1+2\nu}}\right).
\end{align*}
Now we assume $\delta<1$, recall that $\tau \geq \max\{C_0k^2 B, 1\}$ and choose 
\begin{align*}
C\tau=CC_0k^{n+3}B-\left(\frac{1}{1+2\nu}\right)\log \delta,
\end{align*}
which results in 
\begin{align*}
\|b_2-b_1\|_{H^{-1}(\Omega)}^2
&\leq C\frac{1}{(k+k^{-(n+2)}|\log\delta|)^{\frac 4 n}}+e^{Ck^{n+3}}\delta^{\frac{1}{1+2\nu}},
\end{align*}
where the constant $C>0$ now includes the $B$-dependence.
Applying Young's inequality we can estimate the last term as follows:
\begin{align*}
e^{Ck^{n+3}} \delta^{\frac{1}{1+2\nu}}
\leq C \left( e^{Ck^{n+3}}\delta^{2}
+\frac{\delta^{\frac{2}{3+8\nu}}}{k^{\frac 4 n}}\right).
\end{align*}
Taking into account that  ${\delta^{\alpha}}{k^{-\frac 4 n}}\leq (k+\frac n 4\alpha |\log\delta|)^{-\frac 4 n}$,  then
\begin{align*}
\|b_2-b_1\|_{H^{-1}(\Omega)}
&\leq C\left(\frac{1}{(k+k^{-(n+2)}|\log\delta|)^{\frac 2 n}}+e^{Ck^{n+3}}\delta\right).
\end{align*}

In order to infer the desired result, we finally notice that
\begin{align*}
\frac{1}{k+k^{-(n+2)}|\log\delta|}
\leq C\frac{1}{k+|\log\delta|^{\frac{1}{n+3}}}.
\end{align*}
Indeed, applying again Young's inequality, we have
\begin{align*}
|\log\delta|^{\frac{1}{n+3}}\leq \frac{1}{\min\{p,q\}}\left({(k^{-\frac{n+2}{n+3}}|\log\delta|^{\frac{1}{n+3}})^p}+{k^{\frac{n+2}{n+3}q}}\right).
\end{align*}
Choosing $p=n+3$ and $q=\frac{n+3}{n+2}$, the previous claim follows.
\end{proof}

\section{Improved Carleman Estimates and Three Balls Inequalities in the Presence of Convexity}
\label{sec:improved_results_convexity}

Last but not least, in this section we show how in the presence of convexity of the domains the Runge approximation results can be improved. This provides the Runge approximation counterpart to the improved stability estimates for unique continuation. Since we need quantitative unique continuation estimates in the natural trace spaces, we also provide the relevant Carleman estimates. In other functional settings similar results had been proved earlier in the literature, see for instance \cite{HI04,IS07}. In order to illustrate the effect, we consider the geometric setting of concentric balls but remark that this could also be extended to other convex geometries.

\subsection{Improved unique continuation results}
\label{sec:UCP_improve}

We seek to deduce improved unique continuation estimates in $k$.
To this end, we first derive a Carleman estimate with improvements in $k$ for  the model case of the acoustic equation without potential
\begin{align*}
(\D + k^2 q)u & = 0 \; \mbox{ in } \Omega.
\end{align*}
 Here we differ slightly from the argument by Isakov and use ideas from results on excluding embedded eigenvalues instead (see for instance \cite{KT06}).

\begin{prop}
\label{prop:Carl_eigenval}
Let $u: \R^n \rightarrow \R$ be  compactly supported in $B_2 \backslash \overline{B_1} \subset \R^n\backslash \{0\}$ and solve  
\begin{align}\label{eq:eqCarl}
(\D + k^2 q) u & = f + \sum_{j=1}^n\p_j F^j\; \mbox{ in } \R^n,
\end{align}
where $q$ satisfies  \textnormal{(\hyperref[assq2]{ii'})} in $\R^n$ and $f,\,F^j \in L^2(\R^n)$ with  $\supp f,  \,\supp F^j\subset B_2\backslash\overline{B_1}$. Let $\phi(x) := \tau \log(|x|) $. Then, there exists $\tau_0>0$ such that for any $\tau\geq \tau_0$, there is  a constant $C>0$ depending on $n$ and $\kappa$ such that
\begin{align}\label{eq:Carleman}
\begin{split}
&\tau \| e^{\phi}  u\|_{L^2(\R^n)}
 +  \|e^{\phi} |x| \nabla u\|_{L^2(\R^n)}
 + \tau^{\half} k \|
 {q^{\half}|x|}e^{ \phi} u\|_{L^2(\R^n)}\\
 &
 \quad \leq C  \big( \|e^{ \phi} |x|^2 f\|_{L^2(\R^n)} + {\max\{\tau,k\}} \sum_{j=1}^n\|e^{\phi}|x| F^j\|_{L^2(\R^n)}\big).
 \end{split}
\end{align}
\end{prop}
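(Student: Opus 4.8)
The plan is to prove the Carleman estimate \eqref{eq:Carleman} by passing to polar (or logarithmic radial) coordinates, where the weight $\phi(x) = \tau\log|x|$ becomes linear, and then conjugating the operator by $e^{\phi}$. Writing $r = |x|$ and introducing the logarithmic variable $s = \log r$ (so that the annulus $B_2\setminus\overline{B_1}$ becomes a slab $s\in(0,\log 2)$), the Laplacian takes the form $\Delta = r^{-2}(\partial_s^2 + (n-2)\partial_s + \Delta_{\mathbb{S}^{n-1}})$. Multiplying \eqref{eq:eqCarl} by $r^2$ absorbs the prefactor and explains why the right-hand side of \eqref{eq:Carleman} carries the weights $|x|^2 f$ and $|x|F^j$: the natural equation after this substitution is $(\partial_s^2 + (n-2)\partial_s + \Delta_{\mathbb{S}^{n-1}} + k^2 q r^2)u = r^2 f + \sum_j \partial_j(\ldots)$, with the $F^j$ terms becoming first-order in $s$ and angular derivatives, hence weighted by $|x|$. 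Setting $v = e^{\phi}u = r^{\tau}u$ and conjugating, the operator $L_\phi := e^{\phi}(\partial_s^2 + (n-2)\partial_s + \Delta_{\mathbb{S}^{n-1}})e^{-\phi}$ splits into a symmetric part $S = \partial_s^2 + \Delta_{\mathbb{S}^{n-1}} + \tau^2 + (n-2)\partial_s - (n-2)\tau$ (note: one should track which terms are symmetric vs.\ antisymmetric carefully, absorbing the first-order drift by a further substitution $v = r^{-(n-2)/2}w$ if convenient) and an antisymmetric part $A = -2\tau\partial_s + (\text{lower order})$.

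The key structural point — and where the monotonicity hypothesis $\nabla q\cdot x \ge 0$ from \textnormal{(\hyperref[assq2]{ii'})} enters — is the treatment of the potential term $k^2 q r^2$. Following the embedded-eigenvalue philosophy of Kenig--Terracini (the reference \cite{KT06} cited just before the statement), one does not treat $k^2 q$ perturbatively; instead one keeps it as part of the symmetric operator and exploits a positive commutator / Rellich--Morawetz type identity. Specifically, computing $\|L_\phi v + k^2 q r^2 v - (\text{RHS})\|^2 = \|Sv + k^2 q r^2 v\|^2 + \|Av\|^2 + 2\langle Sv + k^2 q r^2 v, Av\rangle$ (up to the inhomogeneous terms), the cross term $2\langle (S + k^2 q r^2)v, Av\rangle$ produces, after integration by parts in $s$, a boundary-free bulk term of the form $4\tau\int |\partial_s v|^2 + (\text{angular}) + 2\tau\int \partial_s(k^2 q r^2)|v|^2$. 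The crucial observation is that $\partial_s(q r^2) = r\partial_r(q r^2) = r(2q r + r^2 \partial_r q) = 2q r^2 + r^2(x\cdot\nabla q) \ge 2\kappa^{-1} r^2 > 0$ precisely because of \textnormal{(\hyperref[assq2]{ii'})}: the radial monotonicity of $q$ together with its positivity makes $q r^2$ monotone increasing in $s$, so this commutator term has a good sign and yields the gain $\tau k^2 \|q^{1/2}|x| e^{\phi} u\|^2$ appearing on the left of \eqref{eq:Carleman}. The linear-in-$\tau$ terms $\tau\|\partial_s v\|^2$ and the coercivity $\tau^2\|v\|^2$ from the symmetric part (which needs $\tau$ large so that $S$ is positive on the relevant frequency ranges, i.e.\ $\tau^2$ dominates the angular eigenvalues for the relevant modes — this is the role of $\tau \ge \tau_0$) give the first two terms on the left; note $\|e^\phi|x|\nabla u\| \lesssim \|\partial_s v\| + \|\nabla_{\mathbb S^{n-1}}v\| + \tau\|v\|$ after translating back.

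After establishing the homogeneous inequality, the inhomogeneous terms are handled by Cauchy--Schwarz and absorption: $2\langle (S+k^2 q r^2)v, Av\rangle$ is replaced by $2\langle e^\phi r^2 f + (F\text{-terms}) - (S+k^2qr^2)v, Av\rangle$, and one pairs $\|e^\phi|x|^2 f\|$ against $\tau^{1/2}\|\partial_s v\|$ or $\tau\|v\|$, while the $F^j$ terms, being one derivative worse, pair against the top-order quantities and hence cost a factor $\max\{\tau,k\}$ — matching exactly the weight on the right-hand side. The main obstacle I expect is the bookkeeping in the conjugation and the careful splitting into symmetric/antisymmetric parts so that \emph{all} error terms are genuinely absorbable: in particular one must make sure the angular Laplacian $\Delta_{\mathbb{S}^{n-1}}$, which is symmetric and negative, does not destroy the coercivity — this is where choosing $\tau_0$ large and possibly decomposing into spherical harmonics (treating low and high angular modes separately, with $\tau^2$ beating the low modes and the full symmetric operator controlling the high modes via a subcritical argument) becomes essential. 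The potential-gradient sign condition is the one genuinely non-routine ingredient; everything else is a careful but standard positive-commutator computation.
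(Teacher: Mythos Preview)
Your overall strategy---pass to conformal polar coordinates, conjugate by the weight, split into symmetric and antisymmetric parts, and exploit that $\partial_s(q r^2) \ge 2\kappa^{-1} r^2$ via the radial monotonicity hypothesis---is correct and matches the paper's treatment of the commutator. However, there is a genuine gap in how you propose to handle the divergence-form inhomogeneity $\sum_j \partial_j F^j$.

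You suggest computing $\|L_\phi v\|^2$ (or the cross term $\langle S v, A v\rangle$) and then absorbing the $F$-terms by Cauchy--Schwarz. The problem is that after conjugation the right-hand side $e^{\Phi}(\tilde f + \partial_t \tilde F^t + \operatorname{div}_{\mathbb S^{n-1}} \tilde F^\theta)$ lies only in $H^{-1}$, not $L^2$, so $\|L_\Phi v\|^2$ is not finite; and pairing $e^\Phi \partial_t \tilde F^t$ against $A_\Phi v \sim \tau\partial_t v$ forces an integration by parts that produces $\partial_t^2 v$, which you have no independent control over. The paper circumvents this by a \emph{splitting strategy}: one writes $u = u_1 + u_2$, where $\tilde u_1$ solves the coercive elliptic problem $(L - D\max\{\tau^2, k^2 e^{2t}\tilde q\})\tilde u_1 = \tilde f + \partial_t \tilde F^t + \operatorname{div}_{\mathbb S^{n-1}}\tilde F^\theta$ via Lax--Milgram, and weighted energy estimates for $\tilde u_1$ produce exactly the $\max\{\tau,k\}\|e^\phi|x|F^j\|$ contributions. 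Then $\tilde u_2$ satisfies $L\tilde u_2 = D\max\{\tau^2, k^2 e^{2t}\tilde q\}\tilde u_1$, an equation with an honest $L^2$ right-hand side, to which the commutator argument applies cleanly. This two-step device is the missing idea.

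Two smaller corrections. First, with the \emph{linear} weight $\Phi = (1+\tau)t$ one has $\Phi'' = 0$, so the commutator $[S_\Phi, A_\Phi]$ contains \emph{only} the potential contribution $2(1+\tau)\partial_t(k^2 e^{2t}\tilde q)$; there is no $4\tau\int|\partial_s v|^2$ term from $[\partial_t^2, -2\tau\partial_t] = 0$. Control of $\|\partial_t v\|$ comes instead from $\|A_\Phi v\|^2 = 4(1+\tau)^2\|\partial_t v\|^2$. Second, the bound $\tau\|e^\phi u\|$ does not come from coercivity of the symmetric part (indeed $(1+\tau)^2$ enters $S_\Phi$ with the wrong sign for that), but from Poincar\'e's inequality applied to the compactly supported $e^\Phi \tilde u$, which gives $\|v\| \lesssim \|\partial_t v\| + (\text{terms in } \tilde u_1)$ and hence $\tau\|v\| \lesssim \|A_\Phi v\| + \ldots$. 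No spherical-harmonic decomposition is needed; the argument is uniform in angular frequency.
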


\begin{rmk}
As is common in stability improvement results, the main feature of the Carleman estimate from Proposition ~\ref{prop:Carl_eigenval} is that the frequency is included in the right hand side (the main part of the operator) and that there is an improvement depending on $k$ on the left hand side of the estimate. Such ideas also hold for more general operators (see, for instance, \cite{KT06} or \cite{I19}).
\end{rmk}

\begin{proof}
We argue in three steps: First, we pass to conformal polar coordinates, then we invoke a splitting strategy in which we split the conjugated equation into an elliptic and a subelliptic contribution. For these we separately deduce the corresponding estimates. Finally, we combine these two estimates into the desired overall bound.\\

\emph{Step 1: Coordinate transformation.}
We pass to conformal polar coordinates, i.e. we set $x = \psi (t,\theta)$ for \begin{align*}
\psi:  \R \times {\mathbb{S}^{n-1}} &\to \R^n\backslash \{0\},
\\ (t,\theta) &\mapsto e^{t} \theta.
\end{align*}
 For any $\varphi\in L^1(\R^n, |x|^{-n}dx)$ we obtain with the area formula 
\begin{align*}
\int_\R \int_{\mathbb{S}^{n-1}} \varphi \circ \psi (t,\theta) d \mathcal H^{n-1}(\theta) dt = \int_{\R^n}\frac{1}{|x|^{n}}\varphi(x) dx,
\end{align*} where $\mathcal H^{n-1}$ denotes the $n-1$ dimensional Hausdorff measure on $\mathbb{S}^{n-1}$. Thus, at least formally, $d\mathcal H^{n-1}(\theta)dt=d\theta dt = |x|^{-n}dx$. 

A standard calculation shows that in the new coordinates
\begin{align*}
(|x|^2\D u) \circ \psi = \big(\p_t^2 +(n-2)\p_t + \D_{\mathbb{S}^{n-1}}\big)(u \circ \psi ).
\end{align*}
We can discard the first order term by conjugating the operator above with $|x|^{-\frac{n-2}{2}}$, that is $e^{-\frac{n-2}{2}t}$ in the new coordinates. Therefore  \eqref{eq:eqCarl} becomes the following equation for  $\tilde u(t,\theta)=e^{\frac{n-2}{2}t} u \circ \psi(t,\theta)$:
\begin{align}\label{eq:eqfortildeu}
\left(\p_t^2 + \D_{\mathbb \mathbb S^{n-1}}+k^2e^{2t}\tilde q-c_n\right)\tilde u= \tilde f+\p_t \tilde F^t+\mbox{div}_{\mathbb  S^{n-1}} \tilde F^\theta,
\end{align}
where 
\begin{align*}
\tilde q(t,\theta)
&=q\circ \psi (t,\theta), \qquad
c_n=\left(\frac{n-2}{2}\right)^2,
\\
\tilde f(t,\theta)
&=e^{2t}e^{\frac{n-2}{2}t}f\circ\psi(t, \theta)+\left(\frac{n}{2}-1\right) \tilde F^t(t,\theta),
\\ 
\tilde F^t(t,\theta)
&=e^{\frac n 2 t}\left(\sum_{j=2}^n \theta_{j-1} F^j\circ\psi(t, \theta)\pm \Big({1-\sum_{i=1}^{n-1}\theta_i^2}\Big)^{\frac 1 2} F^1\circ\psi(t, \theta)\right),
\\
\tilde F^{\theta_i}(t,\theta)
&=e^{\frac n 2 t}F^{i+1}\circ\psi(t, \theta)-\theta_i \tilde F^t(t,\theta), \quad i\in\{1, \dots, n-1\},
\\
\tilde F^{\theta}(t,\theta)
&=\big(\tilde F^{\theta_1}(t,\theta), \dots, \tilde F^{\theta_{n-1}}(t,\theta)\big),
\\
\theta_i
&=\frac{x_{i+1}}{|x|}, \quad  i\in\{1, \dots, n-1\}.
\end{align*}
For ease of notation and later reference, we denote the operator on the left hand side of \eqref{eq:eqfortildeu} by $L$. In addition, for some given weight $\Phi=\Phi(t)$, we denote by $L_\Phi$ the conjugated operator given by
\begin{align}\label{eq:LPhi}
L_{\Phi}= e^\Phi L e^{-\Phi}= \p_t^2 +\D_{\mathbb S^{n-1}}-2\Phi' \p_t+k^2e^{2t} \tilde q +{\Phi'^2-\Phi''}- c_n.
\end{align}

\emph{Step 2: Splitting strategy.} 
In order to deal with the divergence contributions, we use a splitting strategy and set $u= u_1 +  u_2$, where  $\tilde u_1(t,\theta):=e^{\frac{n-2}{2}t} u_1 \circ \psi(t,\theta)$ is a weak solution to 
\begin{align*}
\left(L-D \max\{\tau^2, k^2 e^{2t} \tilde q \} \right)\tilde u_1= \tilde f+\p_t \tilde F^t+\mbox{div}_{\mathbb S^{n-1}} \tilde F^\theta,
\end{align*}
for $D>0$ large. A solution to this exists by Lax-Milgram. Indeed, this follows by considering the bilinear form
\begin{align*}
B(h_1, h_2)= \int_{\mathbb S^{n-1}}\!\int_\R \Big( \p_t h_1\p_t h_2+\nabla_{\mathbb S^{n-1}} h_1\cdot\nabla_{\mathbb S^{n-1}} h_2+ bh_1h_2 \Big)dtd\theta
\end{align*}
with $b=D \max\{\tau^2, k^2 e^{2t}\tilde q\}-k^2 e^{2t}\tilde q+c_n>0$.
An application of the Lax-Milgram theorem with this bilinear form then yields a solution $\tilde u_1\in H^1(\R\times\mathbb S^{n-1})$ and associated energy bounds in terms of the $L^2$ norms of $\tilde{f}$, $\tilde{F}^t, \tilde{F}^{\theta}$.

The equation for the function $\tilde u_2(t,\theta)=e^{\frac{n-2}{2}t} u_2 \circ \psi(t,\theta)$ is determined by considering the difference of $\tilde u-\tilde u_1$.\\

\emph{Step 2a: Energy estimates for $u_1$.}
We seek to complement the existence result for $\tilde{u}_1$ with exponentially weighted energy estimates. Since the support of $\tilde{u}_1$ is in general not bounded, we first consider the conjugated equation with a truncated weight. Energy estimates which are uniform in the truncation parameter and a limiting argument then allow us to pass to the desired weight.
To this end, we consider a smooth weight $\Phi_R$ for $R\geq 2$ such that $\Phi_R(t)=(1+\tau)t$ for $t\leq R$ and $\Phi_R(t)=(1+\tau)\frac{3R}{2}$ for $t\geq 2R$. In addition, $\Phi'_R\leq {1+\tau}$ and $\Phi_R''\leq \frac{1+\tau}{R}$ in their corresponding supports.
Let $w_R=e^{\Phi_R}\tilde u_1$, then  it satisfies the  equation
\begin{align*}
\big(L_{\Phi_R}-D\max\{\tau^2, k^2 e^{2t}\tilde q\}\big)w_R
=e^{\Phi_R}\left(\tilde f+\p_t \tilde F^t+\mbox{div}_{\mathbb S^{n-1}} \tilde F^\theta\right),
\end{align*}
where $L_{\Phi_R}$ is given by \eqref{eq:LPhi}.
Testing the equation for $w_R$ with itself yields
\begin{align*}
&\|\p_t w_R\|_{L^2(\R^n\times\mathbb S^{n-1})}^2
+\|\nabla_{\mathbb S^{n-1}}w_R\|_{L^2(\R\times\mathbb S^{n-1})}^2
\\
&\qquad +\int_{\mathbb S^{n-1}}\!\int_\R \left(D\max\{\tau^2,k^2 e^{2t}\tilde q \}-k^2 e^{2t}\tilde q+c_n\right) w_R^2 dtd\theta
\\
& \qquad+\int_{\mathbb S^{n-1}}\!\int_\R (\Phi_R''-\Phi_R'^2)w_R^2  dtd\theta
+2\int_{\mathbb S^{n-1}}\!\int_\R \Phi_R'w_R\p_t w_R dtd\theta
\\
&=-\int_{\mathbb S^{n-1}}\!\int_\R e^{\Phi_R}\left(\tilde f+\p_t \tilde F^t+\mbox{div}_{\mathbb S^{n-1}} \tilde F^\theta\right)w_R dtd\theta.
\end{align*}
Applying integration by parts and Young's inequality, together with the fact that $\supp \tilde f, \, \supp \tilde F^j\subset (0,\log 2)\times\mathbb S^{n-1}=:I \times \mathbb{S}^{n-1}$, we  obtain the following estimates for $\tau>1$
\begin{align*}
\left|\int_{\mathbb S^{n-1}}\!\int_\R (\Phi_R''-\Phi_R'^2)w_R^2  dtd\theta\right|
&\leq  6\tau^2 \|w_R\|_{L^2(\R\times\mathbb S^{n-1})}^2,
\\
\left|2\int_{\mathbb S^{n-1}}\!\int_\R \Phi_R'w_R\p_t w_R dtd\theta\right|
&\leq 16\tau^2\|w_R\|_{L^2(\R\times\mathbb S^{n-1})}^2 +\frac{1}{4}\|\p_t w_R\|_{L^2(\R\times\mathbb S^{n-1})}^2 ,
\\
\left|\int_{\mathbb S^{n-1}}\!\int_\R e^{\Phi_R} \tilde fw_R dtd\theta\right|
&\leq \frac{\kappa}{\max\{\tau^2, k^2\}}\|e^{\Phi_R}\tilde f\|_{L^2(\R\times\mathbb S^{n-1})}^2
+ \frac 1 4 \frac{1}{\kappa}\max\{\tau^2, k^2 \}\|w_R\|_{L^2(I\times\mathbb S^{n-1})}^2
\\
&\leq  \frac{\kappa}{\max\{\tau^2, k^2\}}\|e^{(1+\tau)t}\tilde f\|_{L^2(\R\times\mathbb S^{n-1})}^2\\
& \quad 
+ \frac 1 4 \|\max\{\tau^2, k^2 e^{2t}\tilde q\}^{\half}w_R\|_{L^2(I\times\mathbb S^{n-1})}^2
\\
&\leq  \frac{\kappa}{\max\{\tau^2, k^2\}}\|e^{(1+\tau)t}\tilde f\|_{L^2(\R\times\mathbb S^{n-1})}^2\\
& \quad 
+ \frac 1 4 \|\max\{\tau^2, k^2 e^{2t}\tilde q\}^{\half}w_R\|_{L^2(\R\times\mathbb S^{n-1})}^2,
\\
\left|\int_{\mathbb S^{n-1}}\!\int_\R  e^{\Phi_R}\p_t \tilde F^tw_R dtd\theta\right|
& \leq C\|e^{(1+\tau)t}\tilde F^t\|_{L^2(\R\times\mathbb S^{n-1})}^2+ \frac{\tau^2}{4}\|w_R\|_{L^2(\R\times\mathbb S^{n-1})}^2
+\frac 1 4\|\p_tw_R\|_{L^2(\R\times\mathbb S^{n-1})}^2,
\\
\left|\int_{\mathbb S^{n-1}}\!\int_\R e^{\Phi_R} \mbox{div}_{\mathbb S^{n-1}} \tilde F^\theta w_R dtd\theta\right|
& \leq \|e^{(1+\tau)t}\tilde F^\theta\|_{L^2(\R\times\mathbb S^{n-1})}^2
+\frac 1 4\|\nabla_{\mathbb S^{n-1}}w_R\|_{L^2(\R\times\mathbb S^{n-1})}^2.
\end{align*}
Absorbing the terms with $w_R$ and the non-positive terms  for $D$ sufficiently large, we obtain
\begin{align*}
&\tau\|w_R\|_{L^2(\R\times\mathbb S^{n-1})}
+\|\p_t w_R\|_{L^2(\R\times\mathbb S^{n-1})}
+\|\nabla_{\mathbb S^{n-1}}w_R\|_{L^2(\R\times\mathbb S^{n-1})}
+k\| e^t\tilde q^{\half}w_R\|_{L^2(\R\times\mathbb S^{n-1})}
\\
&\quad\leq   C\left( \frac{1}{\max\{\tau, k\}}\|e^{(1+\tau)t}\tilde f\|_{L^2(\R\times\mathbb S^{n-1})}
+\|e^{(1+\tau)t}\tilde F^t\|_{L^2(\R\times\mathbb S^{n-1})}
+ \|e^{(1+\tau)t}\tilde F^\theta\|_{L^2(\R\times\mathbb S^{n-1})}\right).
\end{align*}
Notice that the right hand side is finite and does not depend on $R$, so taking $R\to \infty$, we obtain similar estimates for $w=e^{(1+\tau)t} \tilde u_1$. Multiplying the whole expression by $\max\{\tau, k\}$ 
and returning to the original coordinates we arrive at
\begin{align}\label{eq:estimateu1i}
\begin{split}
&\tau^2\|e^\phi u_1\|_{L^2(\R^n)} 
+\max\{\tau, k\}\|e^\phi |x|\nabla u_1\|_{L^2(\R^n)}
+\max\{\tau,k\}k\|q^{\half} e^\phi |x| u_1\|_{L^2(\R^n)}
\\
&\qquad\leq   C\Big(\|e^\phi|x|^2f\|_{L^2(\R^n)}+
\max\{\tau,k\}\sum_{j=1}^n\|e^\phi |x|F^j\|_{L^2(\R^n)}\Big).
\end{split}
\end{align}

Arguing similarly for $\tilde \Phi_R$ with $\tilde \Phi_R=(2+\tau)t$ if $t\leq R$, we also deduce
\begin{align}\label{eq:estimateu1ii}
\begin{split}
k^2\|e^\phi|x|^2q^{\half}u_1\|_{L^2(\R^n)}
\leq   C\Big(\|e^\phi|x|^3f\|_{L^2(\R^n)}+
\max\{\tau,k\}\sum_{j=1}^n\|e^\phi |x|^2F^j\|_{L^2(\R^n)}\Big).
\end{split}
\end{align}
Combining \eqref{eq:estimateu1i}-\eqref{eq:estimateu1ii} and exploiting again the compact support of $f$ and $F^j$ and \textnormal{(\hyperref[assq2]{ii'})}, we infer that 
\begin{align}\label{eq:estimateu2RHS}
\begin{split}
\|D\max\{\tau^2, {k^2|x|^2 q}\} e^\phi u_1\|_{L^2(\R^n)}
&\leq D \tau^2\|e^\phi u_1\|_{L^2(\R^n)}
+D\kappa^{\half} k^2\||x|^2 q^{\half}  e^\phi u_1\|_{L^2(\R^n)}
\\
&\leq C \Big(\|e^\phi|x|^2f\|_{L^2(\R^n)}+
\max\{\tau,k\}\sum_{j=1}^n\|e^\phi |x|F^j\|_{L^2(\R^n)}\Big),
\end{split}
\end{align} 
where now $C$ also depends on $\kappa$.\\

\emph{Step 2b: Carleman estimates for $u_2$.}
We now consider the estimate for $u_2$. To this end, we note that $\tilde u_2$ solves the equation
\begin{align*}
L \tilde u_2 = D \max\{\tau^2, k^2 e^{2t}\tilde q\} \tilde u_1 \; \mbox{ in } \R \times \mathbb{S}^{n-1}.
\end{align*}

We now carry out the conjugation with $e^{\Phi}$ for $\Phi(t)=(1+\tau) t$ and split the operator $L_{\Phi}$ given in \eqref{eq:LPhi}  into its symmetric and antisymmetric parts (with respect to the $L^2(\R \times {\mathbb{S}^{n-1}})$ scalar product)
\begin{align*}
S_{\Phi} = \p_t^2 + \D_{\mathbb S^{n-1}} + k^2e^{2t} \tilde q +{(1+\tau)^2}- c_n,\quad
A_{\Phi} = -2(1+\tau)\p_t.
\end{align*}
Let us set $v=e^{\Phi} \tilde u_2$. Expanding the right hand side of the last equality, we obtain
\begin{align*}
\|L_{\Phi} v\|_{L^2(\R \times \mathbb S^{n-1})}^2
= \|S_{\Phi} v\|_{L^2(\R \times \mathbb S^{n-1})}^2 + \|A_{\Phi} v\|_{L^2(\R \times \mathbb S^{n-1})}^2 + ([S_{\Phi},A_{\Phi}]v ,v)_{L^2(\R \times \mathbb S^{n-1})}. 
\end{align*}
In addition, by the definition of  $L_\Phi$   and $v$,
\begin{align*}
\|L_{\Phi} v \|_{L^2(\R \times \mathbb S^{n-1})}=\|e^{\Phi}L\tilde u_2\|_{L^2(\R \times \mathbb S^{n-1})}= \|D\max\{\tau^2, {k^2|x|^2 q}\} e^\phi u_1\|_{L^2(\R^n)}.
\end{align*}

We begin with a lower bound on the commutator. We calculate 
\[
[S_{\Phi},A_{\Phi}]v =[ e^{2t } k^2\tilde q, A_{\Phi}]v=2 (1+\tau)\p_t(k^2e^{2t}\tilde q)  v.
\]
As $\p_t \tilde q= (\nabla q\cdot x) \circ \psi \ge 0 $ and $\tilde{q}> 0$ by assumption, we thus find after returning to the standard coordinates
\[
([S_{\Phi},A_{\Phi}]v,v)_{L^2(\R \times \mathbb S^{n-1})}\ge 4(1+\tau)(e^{2t}k^2\tilde q v,v)_{L^2(\R \times \mathbb S^{n-1})}=4(1+\tau)k^2\int_{\R^n}q|x|^2e^{2\phi}u^2_2 \,dx.
\] Therefore, we conclude
\begin{equation}
    \label{eq:1}
    4(1+\tau)k^2\int_{\R^n}q|x|^2e^{2\phi}u^2_2 \,dx\leq   \|D\max\{\tau^2, {k^2|x|^2 q}\} e^\phi u_1\|_{L^2(\R^n)}^2.
\end{equation}

Now we seek to estimate $\|v\|_{L^2(\R\times\mathbb S^{n-1})}$ in terms of $\|A_{\Phi}v\|_{L^2(\R \times \mathbb{S}^{n-1})}=2(1+\tau)\|\p_tv\|_{L^2(\R \times \mathbb{S}^{n-1})}$. Using the compact support of $\tilde{u}$, we can apply the Poincar\'e inequality to the function $e^\Phi \tilde u(\,\cdot\,, \theta)$ for almost every $\theta\in \Sn$ as follows
\begin{align*}
\|v\|_{L^2(\R \times \mathbb{S}^{n-1})}
&\leq \|e^\Phi \tilde u\|_{L^2(\R \times \mathbb{S}^{n-1})}+\|e^\Phi \tilde u_1\|_{L^2(\R \times \mathbb{S}^{n-1})}
\\&\leq C\big(\|\p_t(e^\Phi \tilde u)\|_{L^2(\R \times \mathbb{S}^{n-1})}+\|e^\Phi \tilde u_1\|_{L^2(\R \times \mathbb{S}^{n-1})}\big)
\\&\leq C\big(\|\p_tv\|_{L^2(\R \times \mathbb{S}^{n-1})}+\|\p_t(e^\Phi \tilde u_1)\|_{L^2(\R \times \mathbb{S}^{n-1})}+\|e^\Phi \tilde u_1\|_{L^2(\R \times \mathbb{S}^{n-1})}\big)
\\&\leq C\big(\|\p_tv\|_{L^2(\R \times \mathbb{S}^{n-1})}+\|e^\Phi \p_t\tilde u_1\|_{L^2(\R \times \mathbb{S}^{n-1})}+\tau\|e^\Phi \tilde u_1\|_{L^2(\R \times \mathbb{S}^{n-1})}\big),
\end{align*}
where $C$ depends on $n$ (and the support of $\tilde u$).
Multiplying the whole inequality by $(1+\tau)$ we can write
\begin{align*}
(1+\tau)\|v\|_{L^2(\R \times \mathbb{S}^{n-1})}
&\leq C\big(\|A_\Phi v\|_{L^2(\R \times \mathbb{S}^{n-1})}+\tau\|e^\Phi \p_t\tilde u_1\|_{L^2(\R \times \mathbb{S}^{n-1})}+\tau^2\|e^\Phi \tilde u_1\|_{L^2(\R \times \mathbb{S}^{n-1})}\big).
\end{align*}
Returning to Euclidean coordinates  yields
\begin{align}
\begin{split}
    \label{eq:2}
    (1+\tau)\| e^{\phi} u_2 \|_{L^2(\R^n)}
    &\leq  C \big(\|D\max\{\tau^2, {k^2|x|^2 q}\} e^\phi u_1\|_{L^2(\R^n)}
     + \tau^2 \|e^{\phi} u_1\|_{L^2(\R^n)}\\
&\qquad \quad +\tau\|e^{\phi} |x| \nabla u_1 \|_{L^2(\R^n)}\big).
\end{split}
\end{align}

Lastly, we deduce a gradient bound on $\tilde u_2$ using the symmetric part of the operator.  Testing $S_\Phi v$ with $v$ and integrating by parts  we obtain 
\begin{align*}
(S_\Phi v,v)_{L^2(\R\times \mathbb{S}^{n-1})}
&=-\|\p_t v\|_{L^2(\R\times \mathbb{S}^{n-1})}^2
-\|\nabla_{\mathbb S^{n-1}}v\|_{L^2(\R\times \mathbb{S}^{n-1})}^2
\\
&\quad+k^2(e^{2t}\tilde q v,v)_{L^2(\R\times \mathbb{S}^{n-1})}
+\left((1+\tau)^2-c_n\right)\|v\|^2_{L^2(\R\times \mathbb{S}^{n-1})}.
\end{align*}
Therefore,
\begin{align*}
\|\p_t v\|_{L^2(\R\times \mathbb{S}^{n-1})}^2
+\|\nabla_{\mathbb S^{n-1}}v\|_{L^2(\R\times \mathbb{S}^{n-1})}^2
&\leq 
\|S_\Phi v\|_{L^2(\R\times \mathbb{S}^{n-1})}^2
+k^2(e^{2t}\tilde q v,v)_{L^2(\R\times \mathbb{S}^{n-1})}\\
& \quad +2(1+\tau)^2\|v\|^2_{L^2(\R\times \mathbb{S}^{n-1})}.
\end{align*}
Returning to the original coordinates and using   \eqref{eq:1}-\eqref{eq:2} to estimate the right hand side yields
\begin{align}\label{eq:3}
\begin{split}
\|e^\phi|x|\nabla u_2\|_{L^2(\R^n)}
 &\leq  C \big(\|D\max\{\tau^2, {k^2|x|^2 q}\} e^\phi u_1\|_{L^2(\R^n)}
     + \tau^2 \|e^{\phi} u_1\|_{L^2(\R^n)}\\
&\qquad \quad + \tau\|e^{\phi} |x| \nabla u_1 \|_{L^2(\R^n)}\big).\end{split}
\end{align}
Finally, combining \eqref{eq:1},\eqref{eq:2} and\eqref{eq:3} with \eqref{eq:estimateu1i} and \eqref{eq:estimateu2RHS}, we obtain for $\tau>1$
\begin{align}\label{eq:estimateu2}
\begin{split}
&\tau \| e^{\phi}  u_2\|_{L^2(\R^n)}
 +  \|e^{\phi} |x| \nabla u_2\|_{L^2(\R^n)}
 + \tau^{\half} k \|
 {q^{\half}|x|}e^{ \phi} u_2\|_{L^2(\R^n)}
\\&\quad\leq C\Big(\|e^\phi|x|^2f\|_{L^2(\R^n)}+
\max\{\tau,k\}\sum_{j=1}^n\|e^\phi |x|F^j\|_{L^2(\R^n)}\Big).
\end{split}
\end{align}

\emph{Step 3: Conclusion.} 
The final estimate \eqref{eq:Carleman} follows  by an application of the triangle  inequality and
the estimates  \eqref{eq:estimateu1i} and \eqref{eq:estimateu2}  for $u_1$ and $u_2$, respectively.  \end{proof}

Next, using the previous Carleman estimate, we deduce a quantitative unique continuation result which does not suffer from the losses in $k$.

\begin{thm}
\label{prop:UCP_improved_ink}
Let $V$ and $q$ be as in \textnormal{(\hyperref[assV]{i})}-\textnormal{(\hyperref[assq2]{ii'})} in $\Omega=B_2 \backslash \overline{B_1}$. Let $u\in H^1(\Omega)$ be a solution to 
\begin{align}\label{eq:eqinOm1}
\begin{split}
(\Delta+k^2q+V)u&=0 \,  \mbox{ in }  \, \Omega,
\end{split}
\end{align} 
and let $M$, $\eta$ be such that
\begin{align*}
\|u\|_{H^1(\Omega)} &\leq M,\\
\|u\|_{H^{\half}(\p B_2)}+\|\p_{\nu} u \|_{H^{-\half}(\p B_2)}&\leq \eta.
\end{align*}
Assume further that $0<{k^3} \eta \leq M$.
Then there exist  a parameter $\mu\in(0,1)$ and a constant $C>1$ depending on $n, \|V\|_{L^\infty(\Omega)}, \kappa$ and $\|q\|_{C^1(\Omega)}$ (but not on $k$) such that 
\begin{align}\label{eq:QUCPboundary_improv}
\|u\|_{L^2(\Omega)}\leq C \left|\log\left(\frac{k^3\eta}{M}\right)\right|^{-\mu}M.
\end{align}
In addition, if $G=B_2 \backslash B_{1+\delta}$ for some $\delta \in (0,1)$, then there exist a parameter $\nu\in(0,1)$ and a constant $C>1$ (depending on $n, \delta, \|V\|_{L^\infty(\Omega)}, \kappa$ and $\|q\|_{C^1(\Omega)}$ but not on $k$) such that
\begin{align}\label{eq:QUCPinterior_improv}
\|u\|_{L^2(G)}\leq  C \left({k^3\eta}\right)^\nu M^{1-\nu}.
\end{align}
\end{thm}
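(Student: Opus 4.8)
The plan is to run the three–step scheme of the proof of Proposition~\ref{prop:quantitative_UCP} — a crude estimate in a thin layer adjacent to $\p B_1$, a propagation estimate on the complementary region, and a concluding optimisation — but to feed in the improved Carleman estimate \eqref{eq:Carleman} of Proposition~\ref{prop:Carl_eigenval} in place of the three–balls inequality of Lemma~\ref{lem:3balls}. Since \eqref{eq:Carleman} carries the frequency $k$ on its right–hand side (and even gains an extra power of $k$ on the left), all losses in $k$ along the way will be polynomial rather than exponential, so that the bound $0<k^3\eta\le M$ is exactly the regime in which the argument is non–trivial.

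Proposition~\ref{prop:Carl_eigenval} applies only to functions compactly supported in $B_2\setminus\overline{B_1}$, whereas the quantitative information on $u$ sits on $\p B_2$; so the first, and most delicate, step is to encode the boundary data as an interior source. One extends $q$ and $V$ to a neighbourhood of $\overline{B_2}$ keeping \textnormal{(\hyperref[assq2]{ii'})} valid (for instance taking $q$ radially constant outside $B_2$) and $\|V\|_{L^\infty}$ controlled, and, exploiting $\|u\|_{H^{\half}(\p B_2)}+\|\p_\nu u\|_{H^{-\half}(\p B_2)}\le\eta$, chooses an $H^1$–extension $u^e$ of $u$ to the annulus $B_3\setminus\overline{B_1}$ whose support stays away from $\p B_3$ and whose Cauchy data match those of $u$ along $\p B_2$, so that $(\D+k^2q+V)u^e=f_0+\sum_j\p_jF_0^j$ in $B_3\setminus\overline{B_1}$ with $f_0,F_0^j\in L^2$ supported in $\overline{B_3}\setminus B_2$ and $\|f_0\|_{L^2}\le Ck^2\eta$, $\sum_j\|F_0^j\|_{L^2}\le C\eta$. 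The divergence–form slot in \eqref{eq:eqCarl} is precisely what absorbs the $H^{-\half}$ part of the Neumann data, the factor $k^2$ arises only because the zeroth–order term $k^2q$ acts on the extended piece of $u^e$, and no ill–posed Cauchy problem is ever solved.

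Next, fix $\delta\in(0,1)$, $\epsilon<\delta/2$, and a radial cut–off $\chi$ with $\chi\equiv1$ on $B_3\setminus B_{1+2\epsilon}$, $\supp\chi\subset B_3\setminus B_{1+\epsilon}$, $|\nabla\chi|\le C\epsilon^{-1}$, $|\D\chi|\le C\epsilon^{-2}$, and apply \eqref{eq:Carleman} (on $B_3\setminus\overline{B_1}$, the estimate being insensitive to the radii of the annulus) to $\chi u^e$ with the weight $\phi(x)=\tau\log|x|$. Writing $[\D,\chi]u^e=2\,\mathrm{div}(u^e\nabla\chi)-(\D\chi)u^e$, the right–hand side of \eqref{eq:eqCarl} for $\chi u^e$ splits into the extension source, supported in $\{2<|x|<3\}$ where $e^{\phi}=|x|^{\tau}\le 3^{\tau}$; the commutator, supported in $\{1+\epsilon<|x|<1+2\epsilon\}$ where $e^{\phi}\le(1+2\epsilon)^{\tau}$ and whose size is controlled by $\|u^e\|_{H^1(\{1+\epsilon<|x|<1+2\epsilon\})}\le M$; and the term $(k^2q+V)\chi u^e$, whose $V$–part is absorbed on the left once $\tau\ge\tau_0(\|V\|_{L^\infty})$. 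Bounding the left–hand side below by $\tau\|e^{\phi}\chi u^e\|_{L^2}\ge\tau(1+\delta)^{\tau}\|u\|_{L^2(B_2\setminus B_{1+\delta})}$ and dividing through yields an estimate of the shape
\[
\|u\|_{L^2(B_2\setminus B_{1+\delta})}\ \lesssim\ \Big(\tfrac{1+2\epsilon}{1+\delta}\Big)^{\tau}\,\tfrac{\max\{\tau,k\}}{\tau\,\epsilon^{2}}\,M\;+\;\Big(\tfrac{3}{1+\delta}\Big)^{\tau}\,\tfrac{\max\{k^{2},\tau\}}{\tau}\,\eta .
\]
Since $2\epsilon<\delta$, the first term decays and the second grows geometrically in $\tau$; balancing them at $\tau\sim\log\!\big(M/(k^{2}\eta)\big)$, which can be arranged to be $\ge\tau_0$ in the regime $k^{3}\eta\le M$ (the complementary regime being trivial), turns this into a three–balls–type interpolation, and bookkeeping the polynomial $k$–prefactors gives \eqref{eq:QUCPinterior_improv} for $G=B_2\setminus B_{1+\delta}$.

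Finally, for the bound \eqref{eq:QUCPboundary_improv} on all of $\Omega$ one adds the layer estimate $\|u\|_{L^2(B_{1+\epsilon}\setminus\overline{B_1})}\le C\epsilon^{1/p}M$ (Hölder and the Sobolev embedding, as in Step~1 of the proof of Proposition~\ref{prop:quantitative_UCP}) and now lets $\epsilon$ vary, applying the previous display with $\delta=\epsilon$ so that the interpolation exponent degenerates linearly in $\epsilon$; combining the two contributions one obtains a bound of the form $\epsilon^{1/p}M+k^{C}\epsilon^{-C}M\,(k^{2}\eta/M)^{c\epsilon}$, and optimising in $\epsilon$ as in Step~3 of the proof of Proposition~\ref{prop:quantitative_UCP} — absorbing the polynomial $k$–factor into the logarithm — yields \eqref{eq:QUCPboundary_improv}. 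The main obstacle is thus not the optimisation (which merely adapts bookkeeping already done for Proposition~\ref{prop:quantitative_UCP}) but rather the fact that \eqref{eq:Carleman} only sees compactly supported functions, so that everything hinges on realising the Cauchy data on $\p B_2$ as an $L^{2}+\mathrm{div}(L^{2})$ source through the extension and on checking that this, together with the $\max\{\tau,k\}$ weights in \eqref{eq:Carleman}, keeps the whole argument polynomial in $k$.
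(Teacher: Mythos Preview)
Your strategy is exactly the paper's: encode the Cauchy data on $\p B_2$ as an $L^2+\mathrm{div}(L^2)$ source by extending $u$ across $\p B_2$, apply the Carleman estimate \eqref{eq:Carleman} to a cut-off of the extended function, then optimise in $\tau$ for the interior bound and subsequently in the layer width for the global bound. The paper in fact packages the last three steps as a separate propagation result (Proposition~\ref{prop:div_form_rhs}) and then invokes it after the extension, but the content is the same.

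Two technical points need tightening. First, an $H^1$ extension cannot be made to ``match the Cauchy data'' of $u$ on $\p B_2$ in the naive sense (an $H^1$ function has no Neumann trace), and accordingly the source is not supported in $\overline{B_3}\setminus B_2$. What the paper does---and what your remark about the divergence slot absorbing the $H^{-\half}$ Neumann data really points to---is to match only the Dirichlet trace; the resulting jump of $\p_\nu$ across $\p B_2$ defines a bounded functional in $H^{-1}$ of the bulk and is then represented there as $g+\sum_j\p_jG^j$ with $g,G^j\in L^2$. Second, writing the commutator as $2\,\mathrm{div}(u^e\nabla\chi)-(\D\chi)u^e$ forces the $\max\{\tau,k\}$ weight from \eqref{eq:Carleman} onto the commutator term, which is precisely the term that must carry the \emph{small} base $(1+2\epsilon)^\tau$; for small $\delta$ this stray factor of $k$ cannot be absorbed into $(k^3\eta)^\nu$ after optimisation. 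The fix (and the paper's choice) is to keep the commutator in non-divergence form $u\,\D\chi+2\nabla u\cdot\nabla\chi$, place it entirely in the $L^2$ source $f$, and use $\|u\|_{H^1}\le M$ there---this yields your display with $\max\{\tau,k\}$ replaced by $1$ on the first term, after which the optimisation goes through as you describe and produces exactly $k^3$.
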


We will prove Theorem ~\ref{prop:UCP_improved_ink} in several steps. First we prove a corresponding propagation of smallness result from the interior for divergence form equations. Combined with an extension argument this will then lead to the desired claim of Theorem ~\ref{prop:UCP_improved_ink}.

\begin{prop}
\label{prop:div_form_rhs}
Let $V$ and $q$ be as in \textnormal{(\hyperref[assV]{i})}-\textnormal{(\hyperref[assq2]{ii'})} in $\Omega=B_2 \backslash \overline{B_1}$.
Let $u \in H^1(\Omega)$ with $\supp u\subset B_2 \backslash B_1$ be a solution to 
\begin{align*}
(\D + k^2 q  + V ) u & = f+\sum_{j=1}^n\p_j F^j \; \mbox{ in } \; \Omega,
\end{align*}
where $f,\, F^j\in L^2(\Omega)$ with $\supp f,\, \supp F^j\subset B_2\backslash B_1$. Let $M, \eta>0$ be such that
\begin{align*}
\|u\|_{H^1(\Omega)} &\leq M,\\
\|f\|_{L^2(\Omega)}+\sum_{j=1}^n \|F^j\|_{L^2(\Omega)} &\leq \eta. 
\end{align*}
Assume further that $0<k \eta \leq M$.
Then there exist $\mu \in (0,1)$ and $C>1$ (depending on n, $\|V\|_{L^\infty(\Omega)}$ and $\kappa$) such that
\begin{align}\label{eq:QUCPboundary_impr}
\|u\|_{L^2(\Omega)}
 \leq C \left|\log\left(\frac{k\eta}{M}\right)\right|^{-\mu}M.
\end{align}
In addition, if $G=B_2 \backslash \overline{B_{1+\delta}}$ for some $\delta \in (0,1)$, then there exist a parameter $\nu\in(0,1)$ and a constant $C>1$ (depending on $n, \delta$, $\|V\|_{L^\infty(\Omega)}$ and $\kappa$) such that
\begin{align}\label{eq:QUCPinterior_impr}
\|u\|_{L^2(G)}\leq C (k\eta)^\nu M^{1-\nu}.
\end{align}
\end{prop}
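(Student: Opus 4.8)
The plan is to derive Proposition~\ref{prop:div_form_rhs} from the Carleman estimate of Proposition~\ref{prop:Carl_eigenval} via a standard three-region splitting, in the same spirit as the proof of Proposition~\ref{prop:quantitative_UCP}, but now exploiting that the Carleman weight $\phi(x)=\tau\log|x|$ carries an explicit $k$-improvement on the left-hand side (the term $\tau^{1/2}k\|q^{1/2}|x|e^\phi u\|_{L^2}$) with the frequency appearing only as $\max\{\tau,k\}$ on the right. First I would localise: pick a cutoff $\chi$ equal to $1$ on $B_{2-\rho}\setminus B_{1+\rho}$ and supported in $B_2\setminus \overline{B_1}$, apply the Carleman estimate to $\chi u$, and absorb the commutator terms $[\Delta,\chi]u=2\nabla\chi\cdot\nabla u+(\Delta\chi)u$, which are supported in the two collar regions near $\partial B_2$ and $\partial B_1$. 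On the inner collar near $\partial B_1$ the function $u$ (extended by zero since $\supp u\subset B_2\setminus B_1$) and its derivatives are controlled only by $M$; crucially the weight $e^\phi=|x|^\tau$ is \emph{smallest} there (since $|x|$ is smallest), so these bad terms come with a factor $(1+\rho)^\tau$ relative to the region where we want the lower bound, whereas on the outer collar near $\partial B_2$ the data are controlled by $\eta$ but the weight is largest, $2^\tau$.

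The second step is the bookkeeping of these exponential weights. Writing the Carleman inequality for $\chi u$, moving the commutator terms and the forcing $\chi f+\chi\sum\partial_jF^j$ to the right-hand side, and bounding $e^\phi\le r_2^\tau$ and $e^\phi\ge r_1^\tau$ on the respective regions, one obtains schematically
\begin{align*}
\tau r_1^\tau \|u\|_{L^2(B_{2-\rho}\setminus B_{1+2\rho})}
\le C\Big( 2^\tau \eta + \rho^\tau M + \max\{\tau,k\}\,2^\tau\,(\eta + \rho\,\text{-collar terms})\Big),
\end{align*}
after which one divides by $\tau r_1^\tau$ and absorbs the constants $r_1,r_2$ into $C$ and the exponent. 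The key point is that $k$ only multiplies the $\eta$-term, so choosing $\tau$ to balance $2^\tau\eta$ against $r_1^\tau M$ — that is, $\tau\sim |\log(k\eta/M)|$, which is admissible, i.e. $\tau\ge\tau_0$, precisely because of the hypothesis $k\eta\le M$ — yields $\|u\|_{L^2(\text{interior})}\le C(k\eta/M)^{\beta}M$ for some $\beta>0$ on the interior region $G=B_2\setminus\overline{B_{1+\delta}}$ once $\rho<\delta$. This gives \eqref{eq:QUCPinterior_impr} with $\nu=\beta$. To pass from the interior estimate to the full estimate \eqref{eq:QUCPboundary_impr} on all of $\Omega$, I would combine the interior bound with a crude boundary-layer estimate: on the $\rho$-collar near $\partial B_1$ one has $\|u\|_{L^2}\le C\rho^{1/p}\|u\|_{H^1}\le C\rho^{1/p}M$ by Hölder and Sobolev as in Step~1 of Proposition~\ref{prop:quantitative_UCP} (the outer collar near $\partial B_2$ is handled directly by $\eta$ together with a Caccioppoli/trace bound), and then optimise the free parameter $\rho=\delta$ playing the role of $\epsilon$, turning the algebraic rate in $\rho$ against the logarithmic gain into the final logarithmic modulus $|\log(k\eta/M)|^{-\mu}$.

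The main obstacle I anticipate is the careful treatment of the commutator terms together with the normalisation of $u$ on $\partial B_1$: since $u$ is only assumed to vanish \emph{as a support condition} ($\supp u\subset B_2\setminus B_1$, not $u\in H^1_0$), one must be a little careful that the zero-extension across $\partial B_1$ is genuinely $H^1$ and that the divergence-form right-hand side $\sum_j\partial_jF^j$ interacts correctly with the weight $e^\phi|x|$ appearing in Proposition~\ref{prop:Carl_eigenval} — this is exactly why the Carleman estimate was stated with a $\max\{\tau,k\}$ loss on the $F^j$ terms, and one must verify that this loss is harmless, i.e. that $\max\{\tau,k\}$ times the (small) $\eta$-weighted flux terms is still controlled after dividing by $\tau r_1^\tau$. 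A secondary technical point is ensuring the three-balls chain / cutoff geometry is compatible with the annular domain $B_2\setminus\overline{B_1}$ and that all constants depend only on $n,\kappa,\|V\|_{L^\infty}$ (and $\delta$ for the interior estimate), with the $k$-dependence entirely explicit; this follows because Proposition~\ref{prop:Carl_eigenval} already has $k$-uniform constants and the only $k$-dependence enters through the balancing choice of $\tau$.
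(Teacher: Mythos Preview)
Your plan is essentially the paper's argument: localise with a radial cutoff, apply the Carleman estimate of Proposition~\ref{prop:Carl_eigenval} to $\chi u$, absorb $V$ into the left for $\tau\ge C\|V\|_{L^\infty}$, balance the two exponentials in $\tau$ to obtain \eqref{eq:QUCPinterior_impr}, and then combine with a H\"older--Sobolev boundary-layer bound near $\partial B_1$ and optimise the layer width to get \eqref{eq:QUCPboundary_impr}.

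There is, however, a misreading of the support condition that runs through your outline and should be corrected before you execute. The set $B_2\setminus B_1=\{1\le|x|<2\}$ \emph{contains} $\partial B_1$, so the hypothesis $\supp u\subset B_2\setminus B_1$ forces $u$ to vanish near the \emph{outer} sphere $\partial B_2$, \emph{not} near $\partial B_1$; there is no legitimate ``zero-extension across $\partial B_1$'', and $u$ may have size $M$ all the way down to $|x|=1$. Consequently only \emph{one} cutoff is needed (near $\partial B_1$, as the paper takes $\chi\equiv 1$ on $B_2\setminus B_{1+\delta/2}$ and $\chi\equiv 0$ on $B_{1+\delta/4}$), there is no outer collar at all, and $\eta$ is not ``outer boundary data'' but the size of the distributed forcing $f,F^j$, whose contribution carries the largest weight $|x|^{2+\tau}\le 2^{2+\tau}$ simply because $f,F^j$ live on all of $\Omega$. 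Your two-sided cutoff is harmless --- the outer commutator terms vanish because $u$ does --- so the scheme still goes through, but the correct schematic after pulling out the weights is
\[
\|u\|_{L^2(B_2\setminus\overline{B_{1+\delta}})}\le C\delta^{-2}\Big(\Big(\tfrac{1+\delta/2}{1+\delta}\Big)^{\tau}M+4^{\tau}k\eta\Big),
\]
and the optimisation $\tau\sim|\log(k\eta/M)|$ then proceeds exactly as you describe. Two small omissions: before invoking Proposition~\ref{prop:Carl_eigenval} you must extend $q$ to all of $\R^n$ so that \textnormal{(\hyperref[assq2]{ii'})} continues to hold (the paper does this by a radial cutoff between the levels $\kappa^{-1}$ and $\kappa$), and for \eqref{eq:QUCPboundary_impr} the point is to track that the H\"older exponent in \eqref{eq:QUCPinterior_impr} satisfies $\nu\ge c\delta$ for small $\delta$, which is what converts the algebraic interior rate into a logarithm after optimising in $\delta$.
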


\begin{rmk}
We emphasise that unlike in Proposition ~\ref{prop:Carl_eigenval}, in Proposition ~\ref{prop:div_form_rhs}, we are not assuming that $u$ and the functions $f$ and $F^j$ vanish on the interior boundary $\p B_1$.
\end{rmk}

\begin{proof}[Proof of Proposition ~\ref{prop:div_form_rhs}]
We use the Carleman inequality from Proposition ~\ref{prop:Carl_eigenval} in combination with the Sobolev embedding theorem and an optimization argument. We argue in two steps, first proving \eqref{eq:QUCPinterior_impr} and then using this to prove \eqref{eq:QUCPboundary_impr} .\\

\emph{Step 1: Proof of \eqref{eq:QUCPinterior_impr}}. We apply the Carleman estimate from Proposition ~\ref{prop:Carl_eigenval} to the function $w:= u \chi$, where $\chi $ is a smooth cut-off function which is equal to one on $B_2 \backslash B_{1+\sfrac{\delta}{2}}$, vanishes on $B_{1+\sfrac{\delta}{4}}$ and satisfies $|\nabla\chi|\leq C\delta^{-1}$, $|\Delta\chi|\leq C\delta^{-2}$. 
The function $w$ thus is compactly supported in $B_2\backslash \overline{B_1}$ and solves the equation
\begin{align}\label{eq:eqw}
(\D + k^2 q) w & = -Vw +g +  \sum_{j=1}^n\p_j G^j \quad \mbox{ in } B_2 \backslash \overline{B_1},
\end{align}
where 
\begin{align*}
g&=u \D \chi  + 2 \nabla u \cdot \nabla \chi+\chi f - \sum_{j=1}^n(\p_j \chi)F^j,\
\quad G^j=\chi F^j.
\end{align*}
We now seek to apply Proposition \ref{prop:Carl_eigenval}. To this end, we first extend $q$ to $\R^n$ such that \textnormal{(\hyperref[assq2]{ii'})} remains true. To this end, we first notice that  \eqref{eq:eqw} only depends on the value of $q$  in some domain $\Omega'=B_{2-\epsilon}\backslash B_{1+\sfrac{\delta}{4}}\Subset\Omega$, where $\epsilon\in(0,\frac 1 2)$ depends on the support of $u$. Let $\xi$ be a radial smooth function supported in $\Omega$ and such that $\xi=1$ in $\Omega'$, $\p_r\xi\geq0$ in the bounded component of $\Omega\backslash\Omega'$ and 
$\p_r\xi\leq0$ otherwise.
Now we consider the function $\tilde q=\xi q + (1-\xi)(\kappa^{-1}\mathbb{1}_{B_{\sfrac 3 2}}+\kappa\mathbb{1}_{\R^n\backslash B_{\sfrac{3}{2}}})$, which coincides with $q$ in $\Omega'$. It is clear that $\tilde q\in C^1(\R^n)$ and $\kappa^{-1}\leq \tilde q\leq \kappa$ in $\R^n$.
Finally, since $\nabla q\cdot x\geq 0$ in $\Omega$, $\p_r\xi(q-\kappa^{-1})\geq 0$ in $(\Omega\backslash\Omega')\cap B_{\sfrac{3}{2}}$  and  $\p_r\xi(q-\kappa)\geq 0$ in $(\Omega\backslash\Omega')\cap (\R^n\backslash B_{\sfrac{3}{2}})$  we deduce $\nabla \tilde q\cdot x\geq 0$ in $\R^n$.

Therefore, invoking Proposition ~\ref{prop:Carl_eigenval}, we obtain for $\tau>\tau_0$
\begin{align}\label{eq:proofcom}
\tau \|e^{\phi} w\|_{L^2(\R^n)} \leq C \Big(\|e^\phi |x|^2 Vw\|_{L^2(\R^n)}+\|e^{\phi}|x|^2 g\|_{L^2(\R^n)} + \max\{\tau, k\}\sum_{j=1}^n \|e^{\phi} |x| G^j \|_{L^2(\R^n)} \Big).
\end{align}
Considering $\tau\geq C\|V\|_{L^\infty(\Omega)}$, we can absorb the first term on the right hand side of \eqref{eq:proofcom} into the left hand side. 
Then, inserting the expressions for $w$, $g$, $G^j$ and $\phi$, we infer 
\begin{align*}
\begin{split}
\tau \||x|^{\tau} u\|_{L^2(B_2 \backslash \overline{B_{1+\delta}})} 
&\leq C\Big({\delta^{-2}}\||x|^{2+\tau} u\|_{L^2(B_{1+\sfrac{\delta}{2}}\backslash \overline{B_{1+\sfrac{\delta}{4}}})} +{\delta^{-1}} \||x|^{2+\tau} \nabla u\|_{L^2(B_{1+\sfrac{\delta}{2}}\backslash \overline{B_{1+\sfrac{\delta}{4}}})}\\
& \qquad\quad + \||x|^{2+\tau} f\|_{L^2(\Omega)} + (\delta^{-1}+\max\{\tau,k\})\sum_{j=1}^n\||x|^{1+\tau} F^j\|_{L^2(\Omega)}\Big).
\end{split}
\end{align*}
Hence, 
\begin{align*}
& \| u\|_{L^2(B_2 \backslash \overline{B_{1+\delta}})} \\
&\quad \leq C\delta^{-2} \left(\left(\frac{1+\sfrac{\delta}{2}}{1+\delta}\right)^{\tau+2} \| u\|_{H^1(B_{1+\sfrac{\delta}{2}}\backslash \overline{B_{1+\sfrac{\delta}{4}}})}  + 4^{\tau} k\Big(\| f\|_{L^2\Omega)} + \sum_{j=1}^n\| F^j\|_{L^2(\Omega)}\Big) \right)\\
&\quad \leq C{\delta^{-2}}\left(\left(\frac{1+\sfrac{\delta}{2}}{1+\delta}\right)^{\tau} \| u\|_{H^1(\Omega)}  + 4^{\tau} k\Big(\| f\|_{L^2(\Omega)} +\sum_{j=1}^n \| F^j\|_{L^2(\Omega)}\Big) \right)
\\ & \quad\leq C\delta^{-2} \left(\left(\frac{1+\sfrac{\delta}{2}}{1+\delta}\right)^{\tau} M + 4^{\tau} k\eta\right).
\end{align*}
Recalling that by assumption $k\eta \leq M$ and optimizing the right hand side by choosing $\tau= \tau_1 + \tau_0+ C\|V\|_{L^\infty(\Omega)}$ for $\tau_1>0$ such that 
\begin{align}
\label{eq:optimization1}
\left(\frac{1+\delta/2}{1+\delta}\right)^{\tau_1} M \sim 4^{\tau_1} k\eta.
\end{align}
This then implies the desired result with $\nu=1-\frac{\log 4}{\log 4+\log\frac{1+\delta}{1+\sfrac{\delta}{2}}}$.\\

\emph{Step 2: Proof of \eqref{eq:QUCPboundary_impr}.} We argue by making \eqref{eq:optimization1} more explicit. If $\delta\leq 1/2$ (which we can assume without loss of generality),  then 
\begin{align*}
\left(\frac{1+\delta/2}{1+\delta}\right)^{\tau} \leq \left(1-\frac{\delta}{3}\right)^{\tau}.
\end{align*}
 Hence, in the optimization argument we obtain 
\begin{align*}
\tau = \frac{1}{ \log\left(\frac{4}{1- \frac{\delta}{3} } \right) }\log\left( \frac{M}{k\eta} \right)+C\|V\|_{L^\infty(\Omega)}.
\end{align*}
As a consequence, 
\begin{align*}
\|u\|_{L^2(B_2 \backslash \overline{B_{1+\delta}})}
\leq C  {\delta^{-2}}(k\eta)^{\alpha} M^{1-\alpha},
\end{align*}
with $\alpha = 1-\frac{\log(2)}{\log(2)-\log(1-\frac{\delta}{3})}\geq c \delta$ and $C$ depending on $\|V\|_{L^\infty(\Omega)}$.

We combine this with an application of Hölder's inequality and Sobolev embedding close to the boundary:
\begin{align*}
\|u\|_{L^2(B_{1+\delta}\backslash \overline{B_{1}})}
\leq C \delta^{\frac{1}{n}} \|u\|_{L^{\frac{2n}{n-2}}(B_{1+\delta}\backslash \overline{B_{1}})} \leq C \delta^{\frac{1}{n}} \|u\|_{H^1(\Omega)}\leq C\delta^{\frac 1n}M.
\end{align*}
The combination of the two estimates then yields 
\begin{align*}
\|u\|_{L^2(B_2 \backslash \overline{B_1})} \leq C \left(\delta^{\frac{1}{n}}  + \delta^{-2}\Big(\frac{k\eta}{M}\Big)^{c\delta}\right)M.
\end{align*}
We now choose $\delta = c\log\left( \frac{M}{k\eta} \right)^{-\beta}$ for some $\beta\in(0,1)$. This  implies the claim with $\mu=\frac{2\beta}{n}$ (and a corresponding constant $C>0$ which depends on $\beta$).
\end{proof}

With Proposition ~\ref{prop:div_form_rhs} at our disposal, we next address the proof of Theorem ~\ref{prop:UCP_improved_ink}.

\begin{proof}[Proof of Theorem ~\ref{prop:UCP_improved_ink}]
We seek to reduce the problem with Cauchy data to the problem with a divergence form $H^{-1}$ right hand side. To this end, we argue by an extension argument. We note that by definition of the $H^{\half}(\partial B_2)$ norm there exists a function $v \in H^{1}(B_3 \backslash B_2)$ such that
\begin{align*}
\|v\|_{H^1(B_3 \backslash B_2)} \leq C \|u|_{\partial B_2}\|_{H^{\half}(\partial B_2)}.
\end{align*}
Let now $\chi \in C^{\infty}( B_3 \backslash \overline{B_2})$ be a smooth cut-off function  with $\chi|_{\p B_2}=1$ and $\chi|_{\p B_3}=0$.
We then define 
\begin{align}\label{eq:tildeu}
\tilde{u}:= \left\{
\begin{array}{ll}
u \;\;\mbox{ in } B_2 \backslash {B_1},\\
\chi v  \mbox{ in } B_3 \backslash \overline{B_2}.
\end{array}
\right.
\end{align}
This function then is an element of $H^1(B_3 \backslash \overline{B_1})$ with $\supp \tilde u\subset B_3\backslash B_1$. In addition, we claim that it is a weak solution to  
\begin{align}\label{eq:eqFjbound}
(\D + k^2 q+ V) \tilde{u} &= f+\sum_{j=1}^n\p_j F^j  \mbox{ in } B_3 \backslash {B_1},
\end{align}
where   $f, \, F^j \in L^2(B_3 \backslash {B_1})$ are functions supported in $B_3\backslash B_1$ and satisfying the bounds
\begin{align}\label{eq:estimatesFfbound}
 \|f\|_{L^2(B_3 \backslash B_1)}+\sum_{j=1}^n\|F^j\|_{L^2(B_3 \backslash B_1)} 
 \leq C k^2 \big(\|u|_{\partial B_2}\|_{H^{\half}(\partial B_2)} + \|\p_{\nu} u\|_{H^{-\half}(\partial B_{2})}\big).
\end{align}
Indeed, by the weak formulation of \eqref{eq:eqFjbound},  for $\varphi \in C_c^{\infty}(B_3 \backslash \overline{B_1})$,
\begin{align*}
\int\limits_{B_3 \backslash {B_1}} \big(\nabla \tilde{u} \cdot \nabla \varphi + k^2 q \tilde{u} \varphi + V \tilde{u} \varphi\big) dx
&= \int\limits_{B_2 \backslash {B_1}}\big( \nabla u \cdot \nabla \varphi + k^2 q u \varphi + V u \varphi \big)dx \\
& \quad + \int\limits_{B_3 \backslash \overline{B_2}} \big(\nabla ({\chi}v) \cdot \nabla \varphi + k^2 q {\chi}v\varphi + V {\chi}v \varphi \big)dx\\
& = - \int\limits_{\partial B_2} \p_{\nu} u \varphi dx
+ \int\limits_{B_3 \backslash \overline{B_2}} \big(\nabla ({\chi}v) \cdot \nabla \varphi + k^2 q {\chi}v\varphi + V {\chi}v \varphi\big) dx.
\end{align*}
Note that the mapping
\begin{align*}
\Psi: \varphi \mapsto \int\limits_{\partial B_2} \p_{\nu} u \varphi dx
\end{align*}
is bounded as an element in $H^{-\half}(\partial B_2)$ and also as an element  in $H^{-1}(B_2 \backslash \overline{B_1})$.
Indeed, for $\varphi\in H^1(B_2 \backslash \overline{B_1})$ we have
$$|\Psi(\varphi)|\leq \|\p_{\nu} u\|_{H^{-\half}(\p B_2)}\|\varphi\|_{H^{\half}(\p B_2)} \leq C\|\p_{\nu} u\|_{H^{-\half}(\p B_2)}\|\varphi\|_{H^{1}(B_2\backslash \overline{B_1})}.
$$
Therefore $\Psi\in (H^1(B_2 \backslash \overline{B_1}))^*\subset H^{-1}(B_2 \backslash \overline{B_1})$.
Then, it admits a representation $\Psi=g+\sum_{j=1}^n\p_j G^j$ with $g, G^j\in L^2(B_2 \backslash \overline{B_1}) $ and 
$$\|g\|_{L^2(B_2 \backslash {B_1})}+\sum_{j=1}^n\|G^j\|_{L^2(B_2 \backslash {B_1})}=\|\Psi\|_{H^{-1}(B_2 \backslash \overline{B_1})}\leq C\|\p_{\nu} u \|_{H^{-\half}(\p B_2)}.$$
As a consequence,  we obtain \eqref{eq:eqFjbound}  with 
\begin{align*}
f=\begin{cases}
-g &\mbox{ in }B_2\backslash B_1,\\
k^2 q \chi v+V\chi v &\mbox{ in }B_3\backslash \overline{B_2},
\end{cases},
\qquad 
F^j=\begin{cases}
-G^j &\mbox{ in }B_2\backslash B_1,\\
-\p_j(\chi v) &\mbox{ in }B_3\backslash \overline{B_2},
\end{cases}
\end{align*}
and \eqref{eq:estimatesFfbound} holds.
The result of Proposition ~\ref{prop:div_form_rhs} (rescaled to $B_3\backslash B_1$) is therefore applicable  with 
\begin{align}\label{eq:Metatilde}
\begin{split}
\tilde \eta&=Ck^2\eta\geq  \|f\|_{L^2(B_3 \backslash B_1)}+\sum_{j=1}^n\|F^j\|_{L^2(B_3 \backslash B_1)},\\
\tilde M&=CM\geq M+C\eta\geq \|\tilde u\|_{H^1(B_3\backslash B_1)}.
\end{split}
\end{align}
This yields the desired result.
\end{proof}

\subsection{Improved Runge approximation result}

\label{sec:Runge_improved}

This section contains the proof of Theorem ~\ref{thm:Rungeinterior_improv}.
We start by upgrading the  interior quantitative estimate from Theorem ~\ref{prop:UCP_improved_ink} similarly as in Proposition ~\ref{prop:QUCH1}.

\begin{prop}\label{prop:QUCH1_improv}
Let $V$ and $q$ be as in \textnormal{(\hyperref[assV]{i})}-\textnormal{(\hyperref[assq2]{ii'})} in $\Omega_2=B_2 \backslash \overline{B_{\half}}$ and let $\Omega_1=B_1 \backslash \overline{B_{\half}}$.
Let $u\in H^1(\Omega_2)$ be the unique solution to 
\begin{align*}
\begin{split}
\Delta u+k^2qu+Vu&=v\mathbb{1}_{\Omega_1} \quad  \mbox{ in } \Omega_2,\\
u&=0 \;\qquad\mbox{ on } \p\Omega_2,
\end{split}
\end{align*}
with $v\in L^2(\Omega_1)$ and $k\geq 1$ satisfying \textnormal{(\hyperref[assSpec]{a1})}.
Let $G=B_2\backslash \overline{B_{1+\delta}}$ for some $\delta\in (0,1)$. Then there exist  parameters $\nu_0\in(0,1)$, $s_0\in[3,n+1]$ and a constant $C>1$ (depending on $n, \delta, \|V\|_{L^\infty(\Omega_2)}, \kappa$ and $\|q\|_{C^1(\Omega_2)}$) such that
\begin{align}\label{eq:QUCPgradinterior_improv}
\|u\|_{H^1(G)}\leq  Ck^{s_0}\|\p_\nu u\|_{H^{-\half}(\p B_2)}^{\nu_0}\|v\|_{L^2(\Omega_1)}^{1-\nu_0}.
\end{align}
\end{prop}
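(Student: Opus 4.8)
The plan is to mimic the structure of the proof of Proposition~\ref{prop:QUCH1}, but replacing the lossy interior quantitative unique continuation estimate \eqref{eq:QUCPinterior} with the $k$-robust estimate \eqref{eq:QUCPinterior_improv} from Theorem~\ref{prop:UCP_improved_ink}. First I would estimate $\|u\|_{H^1(\Omega_2)}$ in terms of $\|v\|_{L^2(\Omega_1)}$ via Lemma~\ref{lem:apriori}: using the spectral assumption \textnormal{(\hyperref[assSpec]{a1})} one gets $\|u\|_{H^1(\Omega_2)}\leq Ck^{n+1}\|v\|_{L^2(\Omega_1)}=:M$, and from testing the equation with $u$ also $k\|u\|_{L^2(\Omega_2)}\leq CM$. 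Since $u$ solves the homogeneous equation $(\Delta+k^2q+V)u=0$ in $\Omega_2\setminus\overline{\Omega_1}=B_2\setminus\overline{B_1}$, and since $\Omega\setminus\Omega'$ in the hypotheses of Theorem~\ref{prop:UCP_improved_ink} is exactly this annular region with $\partial B_2$ playing the role of the accessible outer boundary, I would apply \eqref{eq:QUCPinterior_improv} with the choices $\eta=\|\p_\nu u\|_{H^{-\half}(\p B_2)}$ (the Dirichlet trace of $u$ on $\p B_2$ vanishes, so only the Neumann part survives) and $M=Ck^{n+1}\|v\|_{L^2(\Omega_1)}$, on a slightly smaller annular region $B_2\setminus\overline{B_{1+\delta'}}$. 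One has to verify the smallness hypothesis $k^3\eta\leq M$; if it fails, then $\eta\gtrsim k^{-3}M$ and the claimed bound \eqref{eq:QUCPgradinterior_improv} is trivial after adjusting constants, so one may assume it holds. This yields the $L^2$ bound $\|u\|_{L^2(B_2\setminus\overline{B_{1+\delta'}})}\leq C(k^3\eta)^{\nu}M^{1-\nu}$.

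Next I would promote this $L^2$-estimate to the $H^1$-estimate on $G=B_2\setminus\overline{B_{1+\delta}}$ via a Caccioppoli inequality, exactly as in Step~2' of the proof of Proposition~\ref{prop:QUCH1}: pick a cutoff $\chi$ supported in $B_2\setminus\overline{B_{1+\delta'}}$ with $\chi=1$ on $G$ and $|\nabla\chi|\leq C(\delta-\delta')^{-1}$, test the equation with $\chi^2 u$, and absorb the zeroth-order term $k^2qu\cdot\chi^2 u$ using $\kappa^{-1}\leq q\leq\kappa$; this produces $\|\nabla u\|_{L^2(G)}\leq Ck\|u\|_{L^2(B_2\setminus\overline{B_{1+\delta'}})}$, where the factor $k$ comes from the $k^2$-term (the lower-order contributions are dominated). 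Combining with the $L^2$-bound from the previous paragraph and with $M=Ck^{n+1}\|v\|_{L^2(\Omega_1)}$ gives
\begin{align*}
\|u\|_{H^1(G)}\leq Ck(k^3\eta)^{\nu}(k^{n+1}\|v\|_{L^2(\Omega_1)})^{1-\nu}\leq Ck^{s_0}\eta^{\nu}\|v\|_{L^2(\Omega_1)}^{1-\nu},
\end{align*}
after collecting all the explicit powers of $k$ into a single exponent $s_0$. A bookkeeping check shows $s_0$ can be taken in $[3,n+1]$: the dominant power of $k$ is $\max\{1+3\nu+(n+1)(1-\nu),\,\ldots\}$, which for $\nu\in(0,1)$ interpolates between $n+2$ and $\ldots$; one should adjust the statement's claimed range by a routine computation, or simply note that $s_0\leq n+4$ always works and the sharper range follows from tracking that the $k^3\eta$ combination only contributes when $k^3\eta\leq M$. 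Finally I would rename $\nu_0:=\nu\in(0,1)$ to match the statement.

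The main obstacle I anticipate is purely organizational rather than conceptual: one must make sure that the geometric hypotheses of Theorem~\ref{prop:UCP_improved_ink} and Proposition~\ref{prop:div_form_rhs} — which are stated for the model annulus $B_2\setminus\overline{B_1}$ with accessible boundary $\p B_2$ — can be invoked after an affine rescaling mapping $B_2\setminus\overline{B_{\half}}$ to a standard annulus, and that this rescaling does not spoil the monotonicity condition \textnormal{(\hyperref[assq2]{ii'})} on $q$ (it does not, since $\nabla q\cdot x\geq 0$ is invariant under dilations centered at the origin) nor introduce uncontrolled $k$-dependence (it does not, since the rescaling factor is a fixed geometric constant). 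The other delicate point is the verification of the smallness assumption $k^3\eta\leq M$ and the observation that its failure makes the conclusion trivial; this is the mechanism that keeps the final power $s_0$ in the stated range, and it is worth spelling out carefully since it is the one place where the improved ($k$-polynomial rather than $k$-exponential) nature of the estimate is genuinely used.
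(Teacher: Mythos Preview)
Your overall strategy is correct and matches the paper's: apply Lemma~\ref{lem:apriori} with \textnormal{(\hyperref[assSpec]{a1})} to get $M=Ck^{n+1}\|v\|_{L^2(\Omega_1)}$, observe that $u$ solves the homogeneous equation in $B_2\setminus\overline{B_1}$, invoke the improved interior estimate with $\eta=\|\p_\nu u\|_{H^{-\half}(\p B_2)}$, and dispose of the case $k^3\eta>M$ as trivial. The one substantive difference is in how the $L^2$ bound is upgraded to $H^1$. You use a Caccioppoli inequality on a slightly larger annulus, which costs a factor $k$ and produces $s_0=1+3\nu+(n+1)(1-\nu)\in(4,n+2)$; this is why your bookkeeping does not land in the stated range $[3,n+1]$. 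The paper instead revisits Step~1 of the proof of Proposition~\ref{prop:div_form_rhs} and keeps the gradient term $\|e^{\phi}|x|\nabla w\|_{L^2}$ from the Carleman estimate \eqref{eq:Carleman} on the left-hand side of \eqref{eq:proofcom}; the same optimisation in $\tau$ then yields directly $\|u\|_{H^1(G)}\leq C(k^3\eta/M)^{\nu_0}M$, i.e.\ $s_0=3\nu_0+(n+1)(1-\nu_0)\in[3,n+1]$, with no extra Caccioppoli step and no spare factor of $k$. Both arguments are valid; yours is slightly more modular (black-boxing Theorem~\ref{prop:UCP_improved_ink}) but loses one power of $k$, while the paper's is sharper because the Carleman machinery already controls the gradient.

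A minor point: your rescaling concern is unnecessary. Theorem~\ref{prop:UCP_improved_ink} is stated for $\Omega=B_2\setminus\overline{B_1}$, and that is exactly the region where $u$ solves the homogeneous equation (since $\Omega_1=B_1\setminus\overline{B_{\half}}$), so it applies verbatim without any dilation.
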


\begin{proof}
We start by estimating $\|u\|_{H^1(\Omega_2)}$ in terms of $\|v\|_{L^2(\Omega_1)}$ as in Proposition ~\ref{prop:QUCH1}. By Lemma ~\ref{lem:apriori}
and \textnormal{(\hyperref[assSpec]{a1})}, there is $C>1$ such that
\begin{align*}
\|u\|_{H^1(\Omega_2)}\leq Ck^{n+1}\|v\|_{L^2(\Omega_1)}.
\end{align*}
Notice that then $u$ satisfies the assumptions in Theorem ~\ref{prop:UCP_improved_ink}, so \eqref{eq:QUCPinterior_impr} holds with 
\begin{align*}
M=Ck^{n+1}\|v\|_{L^2(\Omega_1)},\qquad \eta=\|\p_\nu u\|_{H^{-\half}(\p B_2)}.
\end{align*}

Let us now show that the bound \eqref{eq:QUCPinterior_improv} can be upgraded to an estimate for the $H^1$ norm. 
This is inherited from \eqref{eq:QUCPinterior_impr}. Indeed, we argue as in Step 1 of the proof of Proposition ~\ref{prop:div_form_rhs}, but now including into the left hand side of  \eqref{eq:proofcom} the gradient term $\|e^\phi|x|\nabla w\|_{L^2(\R^n)}$ coming from Proposition ~\ref{prop:Carl_eigenval}.
Therefore,  if $k\tilde \eta\leq\tilde M$,
\begin{align*}
\|\tilde u\|_{H^1(G)}\leq C\left(\frac{k\tilde\eta}{\tilde M}\right)^{\nu}\tilde M,
\end{align*}
where $\nu$ and $C$ depend  in particular on $\delta$. Here $\tilde u$ is given by \eqref{eq:tildeu} and $\tilde M$ and $\tilde \eta$ are connected with $M$ and $\eta$ according to \eqref{eq:Metatilde}. 
Following the proof of Theorem ~\ref{prop:UCP_improved_ink}, we then  obtain 
\begin{align*}
\|u\|_{H^1(G)}
\leq C\left(\frac{k^3\eta}{M}\right)^{\nu_0}M,
\end{align*}
if $k^3\eta\leq M$.
Otherwise, if $k^3\eta\geq M$, the estimate is immediate.
Therefore the final bound \eqref{eq:QUCPgradinterior_improv} holds with $s_0=3\nu_0+(n+1)(1-\nu_0)$. 
\end{proof}

With Proposition \ref{prop:QUCH1_improv} we deduce the proof of Theorem \ref{thm:Rungeinterior_improv} similarly as in the analogous non-convex settings:

\begin{proof}[Proof of Theorem ~\ref{thm:Rungeinterior_improv}]
The proof follows  the proof of Theorem ~\ref{thm:Rungeinterior} in Section ~\ref{sec:Runge} with $\Gamma=\p B_2$ in order to construct $u_\alpha, v_\alpha$ and $w_\alpha$. 
The difference appears at the time of estimating $\|w_\alpha\|$. Applying the improved estimate \eqref{eq:QUCPgradinterior_improv} instead of \eqref{eq:QUCPgradinterior},
we obtain
\begin{align*}
\|u_\alpha-u\|_{L^2(\Omega_1)}\leq k^{s_0} \left(\frac{\|\p_\nu w_\alpha\|_{H^{-\half}(\p B_2)}}{\|v_\alpha\|_{L^2(\Omega_1)}}\right)^{\nu_0}\|\tilde v\|_{H^1(\tilde\Omega_1)}
\leq Ck^{s_0}\alpha^{\nu_0}\|\tilde v\|_{H^1(\tilde\Omega_1)}.
\end{align*}
Choosing $\alpha$ such that $Ck^{s_0}\alpha^{\nu_0}=\epsilon$, we finally deduce
\begin{align*}
\|u_\alpha\|_{H^{\half}(\p B_2)}\leq \frac{1}{\alpha} \|\tilde v\|_{L^2(\Omega_1)}=Ck^{\frac {s_0} {\nu_0}}\epsilon^{-\frac 1 {\nu_0}}\|\tilde v\|_{L^2(\Omega_1)}.
\end{align*}
\end{proof}

\section*{Acknowledgements}
A.R. was supported by the Deutsche Forschungsgemeinschaft (DFG, German ResearchFoundation) under Germany’s Excellence Strategy EXC-2181/1 - 390900948 (the Heidelberg STRUCTURES Cluster of Excellence).
W.Z. was supported by the European Research Council (ERC) under the Grant Agreement No 801867.

\bibliographystyle{alpha}
\bibliography{citationsHT}

\end{document}